\newtheorem{theorem}{Theorem}
\newtheorem{proposition}[theorem]{Proposition}
\newtheorem{lemma}[theorem]{Lemma}
\newtheorem{conjecture}[theorem]{Conjecture}
\theoremstyle{definition}
\newtheorem{defn}[theorem]{Definition}
\newtheorem{example}[theorem]{Example}
\newtheorem*{theorem*}{Theorem}
\newtheorem*{proposition*}{Proposition}
\newtheorem*{lemma*}{Lemma}
\theoremstyle{remark}
\newtheorem*{remark}{Remark}
\newcommand{\id}{\operatorname{id}}
\newcommand{\defeq}{\vcentcolon=}
\newcommand\nc{\newcommand}
\nc{\on}{\operatorname}
\nc\renc{\renewcommand}
\nc{\BR}{\mathbb R}
\nc{\BC}{\mathbb C}
\nc{\BQ}{\mathbb Q}
\nc{\BZ}{\mathbb Z}
\nc{\BN}{\mathbb N}
\nc{\BP}{\mathbb P}
\nc{\Hom}{\on{Hom}}
\nc{\wt}{\widetilde}
\nc{\vspan}{\on{span}}
\nc{\ord}{\on{ord}}
\nc{\im}{\on{im}}
\nc{\Mat}{\on{Mat}}
\nc{\can}{\on{can}}
\nc{\coker}{\on{coker}}
\nc{\ev}{\on{ev}}
\nc{\Tr}{\on{Tr}}
\nc{\End}{\on{End}}
\nc{\swap}{\on{swap}}
\nc{\Set}{\on{Set}}
\nc{\bC}{{\bm C}}
\nc{\bc}{{\bm c}}
\nc{\bD}{{\bm D}}
\nc{\bd}{{\bm d}}
\nc{\bE}{{\bm E}}
\nc{\be}{{\bm e}}
\nc{\bF}{{\bm F}}
\nc{\bff}{{\bm f}}
\nc{\adj}{\on{adj}}
\nc{\tensor}[3]{#1 \underset{#2}\otimes #3}
\nc{\Nat}{\on{Nat}}
\nc{\op}{\on{op}}
\nc{\Funct}{\on{Funct}}
\nc{\Ob}{\on{Ob}}
\nc{\fR}{\mathfrak{R}}
\nc{\Vect}{\on{Vect}}
\nc{\ns}{\on{non-spec}}
\nc{\bra}[1]{\left\langle #1 \right|}
\nc{\ket}[1]{\left| #1 \right\rangle}
\nc{\braket}[2]{\left\langle #1 \mid #2 \right\rangle}
\nc{\ol}{\overline}
\nc{\ul}{\underline}
\nc{\w}{\omega}
\nc{\nlog}{\on{nlog}}
\nc{\Aut}{\on{Aut}}
\nc{\Gal}{\on{Gal}}
\nc{\ksep}{\on{sep}}
\nc{\low}{\on{low}}
\nc{\Stab}{\on{Stab}}
\subjclass[2000]{11R32, 37P15, 11F80}
\keywords{arithmetic dynamics, dynamical sequences, Galois representations, iterates, rooted trees, tree automorphisms}
\begin{document}

\title[Arboreal Galois Representations of Rational Functions]{On Arboreal Galois Representations of \\Rational Functions}

\author[Ashvin A. Swaminathan]{Ashvin A. Swaminathan}\address{ Department of Mathematics, Harvard College, 1 Oxford Street, \mbox{Cambridge, MA 02138}}

\email{\href{mailto:aaswaminathan@college.harvard.edu}{{\tt aaswaminathan@college.harvard.edu}}
}

\maketitle
\vspace*{-0.25in}
\begin{abstract}
\noindent The action of the absolute Galois group $\Gal(K^{\ksep}/K)$ of a global field $K$ on a tree $T(\phi, \alpha)$ of iterated preimages of $\alpha \in \BP^1(K)$ under $\phi \in K(x)$ with $\deg(\phi) \geq 2$ induces a homomorphism $\rho: \Gal(K^{\ksep}/K) \to \Aut(T(\phi, \alpha))$, which is called an \emph{arboreal Galois representation}. In this paper, we address a number of questions posed by Jones and Manes (\cite{jone,JM}) about the size of the group $G(\phi,\alpha) \defeq \im \rho = \underset{\leftarrow n}\lim\Gal(K(\phi^{-n}(\alpha))/K)$. Specifically, we consider two cases for the pair $(\phi, \alpha)$: (1) $\phi$ is such that the sequence $\{a_n\}$ defined by $a_0  = \alpha$ and $a_n = \phi(a_{n-1})$ is periodic, and (2) $\phi$ commutes with a nontrivial M\"{o}bius transformation \mbox{that fixes $\alpha$.}

 In the first case, we resolve a question posed by Jones (\cite{jone}) about the size of $G(\phi, \alpha)$, and taking $K = \BQ$, we describe the Galois groups of iterates of polynomials $\phi \in \BZ[x]$ that have the form $\phi(x) = x^2 + kx$ or $\phi(x) = x^2 - (k+1)x + k$. When $K = \BQ$ and $\phi \in \BZ[x]$, arboreal Galois representations are a useful tool for studying the arithmetic dynamics of $\phi$. In the case of $\phi(x) = x^2 + kx$ for $k \in \BZ$, we employ a result of Jones (\cite{jone2}) regarding the size of the group $G(\psi, 0)$, where $\psi(x) = x^2 - kx + k$, to obtain a zero-density result for primes dividing terms of the sequence $\{a_n\}$ defined by $a_0 \in \BZ$ and $a_n = \phi(a_{n-1})$.

 In the second case, we resolve a conjecture of Jones (\cite{jone}) about the size of a certain subgroup $C(\phi, \alpha) \subset \Aut(T(\phi, \alpha))$ that contains $G(\phi, \alpha)$, and we present progress toward the proof of a conjecture of Jones and Manes (\cite{JM}) concerning the size of $G(\phi, \alpha)$ as a subgroup of $C(\phi, \alpha)$.
\end{abstract}

\section{Introduction}\label{intro}

\subsection{Background and Definitions}

\noindent Understanding which primes divide the Fermat numbers (i.e. numbers of the form $2^{2^n} + 1$) has been a question of interest for nearly four centuries. Progress toward answering related questions has been made with the tools of arithmetic dynamics, a field whose objective is to study the behavior of \emph{dynamical sequences} $\{a_n\}$ that are defined by $a_n = \phi(a_{n-1})$ for some choice of a zeroth term $a_0 \in \BZ$ and a polynomial $\phi \in \BZ[x]$. In the case of the Fermat numbers, the relevance of arithmetic dynamics is evident, because the sequence $\{a_n\}$ of Fermat numbers can be expressed in recursive form as $a_0 = 3$ and $a_n = \phi(a_{n-1})$, where $\phi \in \BZ[x]$ is given by $\phi(x) = (x-1)^2 + 1 = x^2 - 2x + 2$. Although it is difficult to give explicit descriptions of the primes that divide the terms of dynamical sequences, we may still be able to obtain qualitative results regarding the distribution of these prime factors in the set of all primes. In particular, given a choice of $a_0 \in \BZ$ and $\phi \in \BZ[x]$, we may attempt to determine the \emph{natural density} $\on{nd}(\phi, a_0)$ of the set $$P_\phi(a_0) = \{\text{prime } p : p \mid a_n \text{ for at least one } n \geq 0\}$$ as a subset of the set of all primes, where $\on{nd}(\phi, a_0)$ is defined by the limit
$$\on{nd}(\phi,a_0) = \limsup_{x \rightarrow \infty} \frac{|\{p \in P_\phi(a_0) : p \leq x\}|}{|\{\text{prime } p \leq x \}|}.$$
In~\cite{odo1} and~\cite{odo2}, Odoni showed that $\on{nd}(\phi, a_0) = 0$ when the Galois groups $G_n(\phi)$ of iterates $\phi^n$ of $\phi$ satisfy the equality
$$\lim_{n \rightarrow \infty} \frac{|\{g \in G_n(\phi) : g \text{ fixes at least one root of } \phi^n\}|}{|G_n(\phi)|} = 0.$$
In the case of \emph{Sylvester's sequence}, which is the sequence $\{a_n\}$ obtained by taking $a_0 = 2$ and $\phi(x) = x^2 - x + 1$, Odoni proved that $\on{nd}(\phi,2) = 0$ by showing that for each $n$ we have $G_n(\phi) \simeq \Aut(T_n)$, where $T_n$ is the complete, rooted binary tree of height $n$. To obtain this result for Sylvester's sequence, Odoni expressed $T_n$ as the tree $T_n(\phi,0)$ of iterated preimages of $0$ under $\phi$ (our notation for a preimage tree of height $n$ is $T_n($``function'',``root''$)$). More precisely, taking $\phi^0(x) = x$ and $\phi^{-n}(x) = \phi^{-1}(\phi^{-(n-1)}(x))$ for each $n \geq 1$, we can construct a binary tree $T(\phi,0)$ of infinite height rooted at $0$ such that the nodes on level $i$ are the elements of $\phi^{-i}(0)$ and such that for each $y \in \phi^{-i}(0)$, the parent of $y$ in level $i-1$ is $\phi(y) \in \phi^{-(i-1)}(0)$. We may then take $T_n(\phi, 0)$ to be the rooted binary subtree of $T(\phi, 0)$ obtained by deleting all levels of $T(\phi, 0)$ beyond level $n$. (Notice that $T(\phi,0)$ is a complete tree if and only if the discriminant of each iterate of $\phi$ does not vanish; when $\phi(x) = x^2 - x + 1$, the tree $T(\phi,0)$ is in fact complete.) As a consequence of Odoni's success with Sylvester's sequence, it is natural to study maps of the form $G_n(\phi) \to \Aut(T_n(\phi,0))$, which are called \emph{arboreal Galois representations}, for various choices of $\phi \in \BZ[x]$.

In~\cite{jone} and~\cite{JM}, Jones and Manes consider a more general situation. Let $K$ be a global field (i.e., an algebraic number field or function field over a finite field), let $\phi \in K(x)$ be a rational function of degree $d \geq 2$ (the case of $d = 1$ is well-studied), and let $\alpha \in \BP^1(K)$ (the projective line of the field $K$). Furthermore, consider only those pairs $(\phi, \alpha)$ such that the equation $\phi^n(x) = \alpha$ has $d^n$ distinct solutions for each $n$, where $\phi^n$ denotes the $n^\mathrm{th}$ iterate of $\phi$ (this condition is easily verified by showing that the orbit of each critical point of $\phi$ avoids $\alpha$). Then the tree $T(\phi, \alpha)$ of iterated preimages of $\alpha$ under $\phi$ is complete and $d$-ary, and its nodes are elements of the separable closure $K^{\ksep}$ of the field $K$. We observe that the elements of the absolute Galois group $\Gal(K^{\ksep}/K)$ induce tree automorphisms of $T(\phi, \alpha)$, so we obtain a homomorphism
$$\rho : \Gal(K^{\ksep}/K) \to \Aut(T(\phi, \alpha)),$$
which is again an arboreal Galois representation. Define $G(\phi, \alpha) \defeq \im \rho$. Notice that $G_n(\phi, \alpha) = \Gal(K(\phi^{-n}(\alpha))/K)$ is the quotient of $G(\phi, \alpha)$ by the equivalence relation $a \sim b$ if $a$ and $b$ act in the same way on $T_n(\phi, \alpha)$. Thus, we may view $G(\phi, \alpha)$ as the limit of the inverse system $\{G_n(\phi, \alpha)\}$. It suffices to take $\alpha = 0$: if $g(x) = x + \alpha$, then we have the equality of Galois groups
$$G_n(\phi, \alpha) = G_n(g^{-1} \circ \phi \circ  g, 0),$$
so we may replace $\phi$ with $g^{-1} \circ \phi \circ g$ and $\alpha$ with $0$.
We then write $T(\phi, 0)$ as $T(\phi)$ and $G(\phi, 0)$ as $G(\phi)$ for short. In light of Odoni's result for Sylvester's sequence, it is natural to try to characterize the structure of $G(\phi)$ as a subgroup of $\Aut(T(\phi))$.

\subsection{Main Results}\label{mainresults}

\noindent In this paper, we extend work of Jones and Manes (see~\cite{jone} and~\cite{JM}) on describing the group $G(\phi)$ for various choices of rational functions $\phi \in K(x)$. Particularly, we consider the following two cases for which the structure of $G(\phi)$ is not fully understood.

\medskip
\noindent {\bf Problem (1):} The sequence $\{a_n\}$ defined by $a_0 = 0$ and $a_n = \phi(a_{n-1})$ is periodic. (Actually, we want to study this problem for arbitrary $a_0 \in K$. Nonetheless, if $a_0 \neq 0$, we may conjugate $\phi$ by $g(x) = x + a_0$ to set $a_0 = 0$.)\\
\noindent {\bf Problem (2):} The function $\phi$ commutes with a nontrivial M\"{o}bius transformation that fixes the root $\alpha = 0$ of the tree $T(\phi)$.
\medskip

As exemplified by Odoni's work, understanding the structure of $G(\phi)$ may be useful for obtaining zero-density results for $P_\phi(a_0)$. In~\cite{jone}, Jones describes a stochastic method that can in some cases translate results about the size of $G(\phi)$ into zero-density results for $P_\phi(a_0)$. One key step in Jones' stochastic method is to find ``natural'' subgroups $N(\phi) \subset \Aut(T(\phi))$ that satisfy $G(\phi) \subset N(\phi)$ and $[N(\phi) : G(\phi)] < \infty$. By ``natural,'' we mean subgroups whose action on $T(\phi)$ can be stated explicitly in the language of tree automorphisms, without any reference to elements of $G(\phi)$. In the case of Sylvester's sequence, the desired subgroup $N(\phi)$ happens to be $\Aut(T(\phi))$ itself. But it is easy to show that in the two problems enumerated above, $[\Aut(T(\phi)) : G(\phi)] = \infty$. Consequently, one of our objectives in describing $G(\phi)$ is to find and test candidates for the group $N(\phi)$ when $\phi$ belongs to either of the two problems enumerated above.

The rest of the paper is organized as follows. Section~\ref{period} discusses our results regarding Problem (1) above. The primary results proved in Section~\ref{period} are as follows. We first show that $G(\phi)$ is a small subgroup of the stabilizer $\Stab(\phi) \subset \Aut(T(\phi))$ of the branch whose nodes are precisely the terms of the sequence $\{a_n\}$, thereby resolving a question of Jones (Question 3.4 on p. 18 of~\cite{jone}).

\begin{proposition*}[Proposition~\ref{prop3}]
For every $\phi \in K(x)$ such that $\phi^m(0) = 0$ for some positive integer $m$,  $[\Stab(\phi) : G(\phi)]$ is necessarily infinite.
\end{proposition*}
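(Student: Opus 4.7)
The plan is to pass to finite quotients: since both $\Stab(\phi)$ and $G(\phi)$ are closed subgroups of the profinite group $\Aut(T(\phi))$, it suffices to show that the index $[\Stab_n(\phi) : G_n(\phi)]$ of the images in $\Aut(T_n(\phi))$ tends to infinity with $n$. I will do this by producing an exact formula for the stabilizer growth $|\Stab_n(\phi)|/|\Stab_{n-1}(\phi)|$ and an upper bound on the Galois-side growth $|G_n(\phi)|/|G_{n-1}(\phi)| = [K(\phi^{-n}(0)) : K(\phi^{-(n-1)}(0))]$, then comparing.

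The key structural observation is that the periodicity $\phi^m(0) = 0$ gives $\phi^{-k}(0) \subset \phi^{-(k+m)}(0)$ for every $k \geq 0$. Combined with the tower $K \subset K(\phi^{-1}(0)) \subset K(\phi^{-2}(0)) \subset \cdots$ (each inclusion following because $\beta = \phi(\gamma)$ for any $\gamma \in \phi^{-1}(\beta)$), this yields $\phi^{-k}(0) \subset K(\phi^{-(n-1)}(0))$ for all $k \leq n-1$. In particular, when $n \geq m+1$, for every $\beta \in \phi^{-(n-1-m)}(0)$ the full preimage set $\phi^{-1}(\beta) \subset \phi^{-(n-m)}(0)$ already lies in $K(\phi^{-(n-1)}(0))$.

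On the stabilizer side, the restriction $\Stab_n(\phi) \twoheadrightarrow \Stab_{n-1}(\phi)$ is surjective with kernel consisting of tree automorphisms trivial on $T_{n-1}(\phi)$ that also fix the unique branch node at level $n$; counting the freedom in permuting children ($d!$ above each of the $d^{n-1} - 1$ non-branch nodes at level $n-1$, and $(d-1)!$ above the branch node) yields
$$\frac{|\Stab_n(\phi)|}{|\Stab_{n-1}(\phi)|} = (d-1)! \cdot (d!)^{d^{n-1}-1}.$$
On the Galois side, writing $K(\phi^{-n}(0)) = K(\phi^{-(n-1)}(0))\bigl(\bigcup_\beta \phi^{-1}(\beta)\bigr)$ with $\beta$ ranging over $\phi^{-(n-1)}(0)$, each $\phi^{-1}(\beta)$ is the root set of the degree-$d$ polynomial $\phi(x) - \beta$ over $K(\phi^{-(n-1)}(0))$ and so contributes a factor of at most $d!$; by the previous paragraph, the $d^{n-1-m}$ elements $\beta \in \phi^{-(n-1-m)}(0)$ contribute a factor of $1$, giving
$$\frac{|G_n(\phi)|}{|G_{n-1}(\phi)|} \leq (d!)^{d^{n-1}-d^{n-1-m}}.$$

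Combining these two estimates yields
$$\frac{[\Stab_n(\phi) : G_n(\phi)]}{[\Stab_{n-1}(\phi) : G_{n-1}(\phi)]} \geq (d-1)! \cdot (d!)^{d^{n-1-m}-1},$$
which tends to infinity with $n$ for any $d \geq 2$. Hence $[\Stab_n(\phi) : G_n(\phi)] \to \infty$, and passing to the profinite limit forces $[\Stab(\phi) : G(\phi)] = \infty$. The main obstacle will be nailing down the Galois-side estimate cleanly — in particular, rigorously justifying that $\beta \in \phi^{-(n-1-m)}(0)$ really does guarantee $\phi^{-1}(\beta) \subset K(\phi^{-(n-1)}(0))$ (which rests on the nesting $K(\phi^{-(n-m)}(0)) \subset K(\phi^{-(n-1)}(0))$ from the tower property), while the remaining $\beta$'s only need the naïve product bound of $d!$ per preimage set.
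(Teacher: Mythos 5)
Your proof is correct, and it reaches the conclusion by a genuinely different route from the paper's. The paper introduces the auxiliary subgroup $S_n(\phi) \subset \Stab_n(\phi)$ of automorphisms acting identically on the subtrees rooted at $a_r$ and $a_s$ whenever $r \equiv s \pmod m$, observes $G_n(\phi) \subset S_n(\phi)$, computes the exact orders $|\Stab_n(\phi)| = (d!)^{(d^n-1)/(d-1)}/d^n$ and $|S_n(\phi)| = (d!)^{(d^n - d^{n-m})/(d-1)}/d^m$, and concludes via Hausdorff dimension: $\on{hd}(S(\phi),\Stab(\phi)) = 1 - d^{-m} < 1$. You instead compare level-by-level growth: the exact kernel count $(d-1)!\,(d!)^{d^{n-1}-1}$ for $\Stab_n(\phi) \twoheadrightarrow \Stab_{n-1}(\phi)$ against the field-degree bound $[K(\phi^{-n}(0)) : K(\phi^{-(n-1)}(0))] \leq (d!)^{d^{n-1} - d^{n-1-m}}$. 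Both arguments hinge on the same periodicity phenomenon --- $\phi^{-k}(0) \subseteq \phi^{-(k+m)}(0)$, so the preimages of the $d^{n-1-m}$ ``old'' nodes at level $n-1$ already lie in $K(\phi^{-(n-1)}(0))$ --- but you express it field-theoretically (those $\beta$ contribute relative degree $1$) rather than through a subgroup of tree automorphisms; indeed, telescoping your estimate reproduces the bound $|G_n(\phi)| \leq C\,(d!)^{(d^n-d^{n-m})/(d-1)}$ implicit in the paper's $|S_n(\phi)|$, so the two are quantitatively equivalent. You also avoid Hausdorff dimension entirely, using only the elementary fact that $[\Stab(\phi):G(\phi)] = N < \infty$ would force $[\Stab_n(\phi):G_n(\phi)] \leq N$ for all $n$ (closedness is not even needed for that direction). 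What the paper's packaging buys is the exact dimension $1-d^{-m}$ asserted in the full Proposition~\ref{prop3}, plus the group $S(\phi)$ itself, which is reused as the candidate for $N(\phi)$ in Theorems~\ref{thm8}, \ref{thm9}, and~\ref{thm11}; what yours buys is a self-contained, more elementary argument. Two small points to tidy: for rational $\phi$, the preimage set $\phi^{-1}(\beta)$ is cut out by the numerator of $\phi(x)-\beta$, which may have degree $d-1$ (a preimage can sit at infinity), but the $d!$ bound survives; and at the first step $n = m+1$ your ratio bound degenerates to $(d-1)!$, which equals $1$ when $d = 2$, so the divergence really comes from $n \geq m+2$ --- which is all you need.
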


Let $S(\phi) \subset \Stab(\phi)$ be the group of automorphisms of $T(\phi)$ that act in the same way on the subtrees rooted at $a_r$ and $a_s$ whenever $r \equiv s$ modulo the period of the sequence. For the family $\phi \in \BZ[x]$ defined by $\phi(x) = x^2 + kx$, we obtain the following characterization of $G(\phi)$.

\begin{theorem*}[Theorems~\ref{thm8} and~\ref{thm9}]
If $k \in \BZ \setminus \{-2, 0, 2, 4\}$ and $\phi(x) = x^2 + kx$, then $[S(\phi) : G(\phi)] < \infty$. On the other hand, if $k \in \{-2,0,2,4\}$, then $[S(\phi) : G(\phi)] = \infty$, but for each $k$ we can compute a valid \mbox{choice for $N(\phi)$}.
\end{theorem*}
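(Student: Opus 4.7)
The plan is to reduce the study of $G(\phi)$ for $\phi(x) = x^2 + kx$ to the already-understood study of $G(\psi)$ for $\psi(x) = x^2 - kx + k$, to which the theorem of Jones in~\cite{jone2} applies directly.

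First, I would analyze $S(\phi)$ via the self-similarity at $0$. Since $\phi(0) = 0$, the orbit has period $1$, the branch in $T(\phi)$ is the constant node $0$ at every level, and $\phi^{-1}(0) = \{0, -k\}$. Any $\sigma \in S(\phi)$ fixes this branch pointwise and, by the defining condition of $S(\phi)$, acts identically on every subtree of $T(\phi)$ rooted at one of the $0$-nodes of the branch. Propagating this condition one level at a time, the subtree at the $-k$-node at level $r$ — which sits inside the $0$-subtree at level $r-1$ — is forced to carry the same action as $T(\phi, -k)$ at level $1$. Consequently the restriction $\sigma \mapsto \sigma|_{T(\phi, -k)}$ defines an isomorphism $S(\phi) \xrightarrow{\sim} \Aut(T(\phi, -k))$. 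Next, setting $g(x) = x - k$, a direct calculation gives $g^{-1} \circ \phi \circ g = \psi$ and $g(0) = -k$, which produces a canonical, Galois-equivariant isomorphism of labeled trees $T(\phi, -k) \cong T(\psi)$, so $S(\phi) \cong \Aut(T(\psi))$. Under this identification, $G(\phi) \subset S(\phi)$ maps to the image of the surjective restriction $G(\phi, 0) \twoheadrightarrow G(\phi, -k) = G(\psi, 0)$, namely $G(\psi) \subset \Aut(T(\psi))$, and therefore
\[
[S(\phi) : G(\phi)] \;=\; [\Aut(T(\psi)) : G(\psi)].
\]
Invoking Jones's theorem from~\cite{jone2}, which provides exactly the dichotomy that this right-hand index is finite if and only if $k \notin \{-2, 0, 2, 4\}$, simultaneously proves Theorem~\ref{thm8} and the first assertion of Theorem~\ref{thm9}.

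For the second assertion of Theorem~\ref{thm9} — exhibiting a natural $N(\phi)$ with $[N(\phi) : G(\phi)] < \infty$ for each exceptional $k$ — I would proceed case by case. These four values are precisely those for which $\phi$ is conjugate over $K$ to a special quadratic carrying a nontrivial M\"obius symmetry: to the power map $x^2$ when $k \in \{0, 2\}$, and to the Chebyshev polynomial $x^2 - 2$ when $k \in \{-2, 4\}$. In each case, the M\"obius symmetry commuting with $\phi$ descends to an involution $\tau$ of $T(\phi)$ that commutes with every element of $G(\phi)$. I would take $N(\phi)$ to be the centralizer of $\tau$ in $\Aut(T(\phi))$ (or in a suitable substitute in the degenerate tree cases), so that $G(\phi) \subset N(\phi)$ automatically, and then verify $[N(\phi) : G(\phi)] < \infty$ by describing $K(\phi^{-n}(0))$ explicitly as a tower of quadratic radicals — cyclotomic when $k \in \{0, 2\}$, real-cyclotomic / Chebyshev-type when $k \in \{-2, 4\}$.

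The main obstacle is the non-completeness of $T(\phi)$ for $k \in \{0, 4\}$: the critical orbit of $\phi$ meets the root $0$ immediately for $k = 0$ and after one iterate for $k = 4$ (in the latter case $\phi^2(x) = x(x+4)(x+2)^2$ has a repeated root), so the general definitions of $T(\phi)$ and $\Aut(T(\phi))$ in the introduction do not apply verbatim. Before the symmetry-based description of $N(\phi)$ can be made rigorous, one must replace $T(\phi)$ either by a pruned preimage tree or, equivalently, by the inverse system of the Galois groups $\Gal(K(\phi^{-n}(0))/K)$ directly. A secondary subtlety in the reduction above is confirming $G(\phi) \subset S(\phi)$; this follows from the observation that the subtrees of $T(\phi)$ rooted at the various $0$-nodes along the branch carry identical $K^{\ksep}$-labels at corresponding positions, so Galois — which acts by permuting labels — acts identically on all of them.
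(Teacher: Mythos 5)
Your first half is correct and is essentially the paper's own proof of Theorem~\ref{thm8}: the identification $S(\phi) \simeq \Aut(T(\phi,-k))$ coming from the self-similarity of the periodic branch, the conjugation by $g(x) = x-k$ turning $x^2+kx$ with basepoint $-k$ into $\psi(x) = x^2-kx+k$ with basepoint $0$, the equality $\BQ(\phi^{-n}(0)) = \BQ(\phi^{-(n-1)}(-k))$, and the appeal to Jones's theorem (Theorem~\ref{thm7}) all match the paper, as do your conjugacies of the exceptional maps to $x^2$ (for $k=2$) and $x^2-2$ (for $k\in\{-2,4\}$), and your handling of the degenerate node for $k=4$ (the paper likewise uses that the only child of $-4$ is $-2$ and conjugates by $x+2$). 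One caveat: the cited theorem gives only the forward implication (finite index for $k \notin \{-2,0,2,4\}$), not the ``if and only if'' you invoke; the paper establishes infinitude in the exceptional cases by directly computing $|G_n(\phi)| = 2^{n-1}$ (resp.\ $2^{n-2}$) from the cyclotomic descriptions and showing $\on{hd}(G(\phi),S(\phi)) = 0$. Your proposed radical-tower computations would supply the same data, so this is a presentational rather than a substantive defect.

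The genuine gap is in your choice of $N(\phi)$ for the exceptional $k$. Taking $N(\phi)$ to be the centralizer of an involution $\tau$ cannot work, and the paper's own Lemma~\ref{cor1} shows why: if $\tau$ acts freely on the vertices of each level of a binary tree, the centralizer $C(\tau)$ has Hausdorff dimension $m(i)/d^i = 1/2$ in $\Aut(T)$, so $\log_2|C_n(\tau)|$ grows on the order of $2^{n-1}$. But in each exceptional case $\log_2|G_n(\phi)|$ is \emph{linear} in $n$ (the groups are essentially cyclotomic: $|G_n(\phi)| = 2^{n-1}$ for $k \in \{-2,2\}$ and $2^{n-2}$ for $k=4$), so $[C(\tau):G(\phi)] = \infty$ and the finite-index requirement fails badly. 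There is also a setup problem: for $\psi(x) = x^2$ and $\psi(x) = x^2-2$, the group $A(\psi)$ of M\"{o}bius maps commuting with $\psi$ and fixing $0$ is trivial (e.g.\ $\mu(x) = cx$ commutes with $x^2$ only for $c=1$, and $\mu(x)=-x$ does not commute with either map), so the involution you want does not arise as in Section~\ref{trees}; the genuine Galois-equivariant tree involution in the $k=2$ case is $z \mapsto 1/z$ on $T(x^2,-1)$, and its centralizer is still far too large by the above count. Because $|G_n(\phi)|$ is polynomial in $2^n$'s exponent, any valid $N(\phi)$ must itself have level-$n$ truncations of size $2^{O(n)}$; the paper accordingly takes the small pro-abelian groups furnished by the cyclotomic towers themselves, namely $G(\phi) \simeq \BZ/2\BZ \times \BZ_2$ itself when $k=2$, the copy of $\underset{\leftarrow n}\lim \, (\BZ/(3\cdot 2^n)\BZ)^*$ in which $G(\phi)$ sits with index $2$ when $k=-2$, and a copy of $\BZ/2\BZ \times \BZ_2$ containing $G(\phi)$ with index $2$ when $k=4$. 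Your explicit descriptions of $K(\phi^{-n}(0))$ as towers of quadratic radicals are exactly the right tool for producing these groups, but the target must be the closure of the Galois tower, not the centralizer of a finite symmetry group.
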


We obtain a similar but more restricted result in the case when $\phi(x) = x^2 - (k+1)x + k$.

Sections~\ref{trees} and~\ref{squares} treat our results regarding Problem (2) above. In Section~\ref{trees}, we first prove a general lemma about centralizers of subgroups of $\Aut(T)$ for infinite, rooted $d$-ary trees $T$ (see Lemma~\ref{cor1} for the precise statement). Using this lemma, we resolve Conjecture 3.5 of~\cite{jone}, which is proven as Theorem~\ref{thm5} in the present paper. Define $C(\phi) \subset \Aut(T(\phi))$ to be the centralizer of the group $A(\phi)$ of M\"{o}bius transformations that commute with $\phi$ and fix $0$ (the group $A(\phi)$ embeds into $\Aut(T(\phi))$). Then following theorem tells us that $C(\phi)$ is in general a small subgroup of $\Aut(T(\phi))$.

\begin{theorem*}[Theorem~\ref{thm5}]
When $A(\phi)$ is nontrivial we have $[\Aut(T(\phi)) : C(\phi)] = \infty$.
\end{theorem*}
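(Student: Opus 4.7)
The plan is to reduce the statement to the general Lemma~\ref{cor1} established earlier in this section. That lemma, judging from its role in the paper, asserts that for a subgroup $H \subset \Aut(T)$ of an infinite rooted $d$-ary tree $T$ which acts nontrivially on sufficiently many levels, the centralizer $C_{\Aut(T)}(H)$ has infinite index in $\Aut(T)$. Applied to $H = A(\phi)$ viewed inside $\Aut(T(\phi))$, this will give exactly $[\Aut(T(\phi)) : C(\phi)] = \infty$. The work thus reduces to verifying two things: that $A(\phi)$ embeds into $\Aut(T(\phi))$, and that any nontrivial $\mu \in A(\phi)$ acts nontrivially on infinitely many levels of $T(\phi)$.

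For the embedding, each $\mu \in A(\phi)$ satisfies $\phi \circ \mu = \mu \circ \phi$ and $\mu(0) = 0$; consequently $\mu$ sends $\phi^{-n}(0)$ to itself for every $n$ and respects the parent--child relation $y \mapsto \phi(y)$, so it induces a tree automorphism of $T(\phi)$. The map $A(\phi) \hookrightarrow \Aut(T(\phi))$ is injective because a Möbius transformation is determined by its values on any three distinct points of $\BP^1$, whereas for $n \geq 2$ the set $\phi^{-n}(0)$ has $d^n \geq 4$ elements, so the induced tree automorphism recovers $\mu$. For the action statement, a nontrivial Möbius transformation fixes at most two points of $\BP^1$, so for every $n \geq 2$ the induced permutation of the $d^n \geq 4$ nodes on level $n$ cannot be the identity. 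Hence every element of $A(\phi) \setminus \{\id\}$ acts nontrivially on every level $n \geq 2$, which is more than enough to meet the hypothesis of Lemma~\ref{cor1}.

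The main obstacle in the overall argument lies in Lemma~\ref{cor1} itself, whose proof must quantify how the requirement of commuting with a nontrivial tree automorphism cuts down the degrees of freedom available at successive levels of the iterated wreath product $\Aut(T(\phi))$; once that lemma is in hand, the present theorem follows from the elementary observation that a Möbius transformation is pinned down by its behavior on three points, together with the fact that the levels of $T(\phi)$ eventually contain more than three distinct nodes.
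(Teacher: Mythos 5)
Your proposal is correct and follows essentially the same route as the paper: the paper likewise applies Lemma~\ref{cor1} together with the observation that a nontrivial M\"{o}bius transformation fixes at most two points while the $d^n$ distinct roots of $\phi^n$ (already for $n = 2$, since $d^2 \geq 4$) force $\mu$ to move some node, so $m(i) < d^i$ and the Hausdorff dimension of $C(\phi)$ is below $1$, giving infinite index. The only differences are cosmetic: the paper needs nontrivial action on just one level rather than infinitely many, and it leaves implicit the injectivity of $A(\phi) \hookrightarrow \Aut(T(\phi))$, which you verify explicitly.
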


In Section~\ref{squares}, we present progress made toward a proof of Conjecture 1.3 of~\cite{JM}, a restricted version of which states that $[C(\phi) : G(\phi)] < \infty$ when $K = \BQ$ and $\phi$ is a quadratic rational function of the form $\phi(x) = \frac{k_0(x^2 + 1)}{x}$ where $k_0 \in \BZ \setminus \{0\}$. In~\cite{JM} it is shown that $[C(\phi) : G(\phi)]$ is finite when a certain sequence $\{\delta_n(k_0) : n = 2, 3, \ldots\}$ contains no squares. Our first result extends a lower bound from~\cite{JM} on $k_0$-values for which $[C(\phi) : G(\phi)] < \infty$.

\begin{theorem*}[Theorem~\ref{thm6}]
When $1 \leq |k_0| \leq 10^6$, the sequence $\{\delta_n(k_0)\}$ contains no squares. Thus, when $\phi(x) = \frac{k_0(x^2 + 1)}{x}$ and $1 \leq |k_0| \leq 10^6$, we have $[C(\phi) : G(\phi)] < \infty$.
\end{theorem*}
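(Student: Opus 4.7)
\medskip
\noindent \textbf{Proof proposal for Theorem \ref{thm6}.}
\medskip

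Since the implication ``$\{\delta_n(k_0)\}$ contains no squares $\Rightarrow [C(\phi):G(\phi)]<\infty$'' is already established in \cite{JM}, my plan is to concentrate entirely on showing that no $\delta_n(k_0)$ with $n\geq 2$ is a perfect square, uniformly for $1\leq |k_0|\leq 10^6$. First I would recall from \cite{JM} the explicit recursion expressing $\delta_{n+1}(k_0)$ in terms of $\delta_n(k_0)$, $k_0$, and the iterates $\phi^j(0)$; in the family $\phi(x)=k_0(x^2+1)/x$ this recursion has polynomial shape in $k_0$ and its terms grow (doubly) exponentially in $n$, so for each fixed $k_0$ the sequence $\{\delta_n(k_0)\}$ consists of rapidly growing integers whose direct inspection for squareness is feasible only for small $n$.

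The core of the argument will therefore be a two-part strategy: \emph{(i)} a direct check, and \emph{(ii)} a modular obstruction. For part \emph{(i)}, I would compute $\delta_n(k_0)$ exactly for $n\leq N_0$ (where $N_0$ is a small threshold such as $10$ or $20$, chosen so that $\delta_n(k_0)$ still fits in machine arithmetic or can be handled with a multi-precision library) for every integer $k_0$ with $1\leq|k_0|\leq 10^6$, and verify squareness by the usual $\lfloor\sqrt{\cdot}\rfloor$ test. For part \emph{(ii)}, for each $k_0$ I would search a small prescribed list of primes $p_1,\ldots,p_m$ (say the primes up to $100$) and attempt to find some $p=p(k_0)$ with the following property: the sequence $\delta_n(k_0)\bmod p$ becomes eventually periodic (which is automatic, since there are only finitely many possible tuples of consecutive residues and the recursion is deterministic once one records enough state), and every residue appearing in the eventual period, together with every residue appearing for $n\geq N_0$ in the pre-period, is a quadratic non-residue mod $p$. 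When such a $p$ exists, it is a square-obstruction at all $n\geq N_0$, and combined with \emph{(i)} it rules out squares in $\{\delta_n(k_0)\}$ entirely. The finite computation required is a straightforward parallel sweep over $k_0$.

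The step I expect to be the main obstacle is ensuring that a working prime $p(k_0)$ always exists within the chosen list, uniformly over all $2\times 10^6$ values of $k_0$: a priori one could imagine a stubborn $k_0$ for which the periodic residues include a nonzero quadratic residue modulo every small prime, in which case no single prime gives an obstruction. To handle this possibility, I would broaden the strategy by allowing obstructions modulo a \emph{product} $p_{i_1}\cdots p_{i_r}$, i.e., checking at each stage that the residue cannot be a square modulo any one factor; by the Chinese Remainder Theorem this strictly increases the density of captured non-squares, and heuristically with primes up to $100$ the density of residues that survive as possible squares drops well below $10^{-6}$, so a pigeonhole estimate suggests the method succeeds for every $k_0$ in the range. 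If some exceptional $k_0$ still resists, I would increase $N_0$ for that value alone and compute further terms of $\delta_n(k_0)$ directly, using factorization shortcuts from the recursion (e.g., splitting off the recurring resultant factors). Finally, I would present the outcome as a finite verification implemented in a computer algebra system, whose logs confirm the claim.
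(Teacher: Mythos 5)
Your overall strategy---obstruct squareness by reducing modulo small primes and exploiting the eventual periodicity of the pair $(\delta_n,\varepsilon_n)$ in a finite state space, on top of Theorem~\ref{thm1} from~\cite{JM}---is the same congruence method the paper uses, but your execution misses two simplifications, one of which hides a genuine gap. First, since $\delta_n$ and $\varepsilon_n$ are polynomials with integer coefficients, the whole sequence $(\delta_n(k_0),\varepsilon_n(k_0)) \bmod p$ depends only on the residue class of $k_0$ modulo $p$ (for $p \nmid k_0$, the division by $k_0$ is invertible mod $p$). The paper therefore runs the period/preperiod check once per congruence class per prime, producing the explicit list of classes in the statement of Theorem~\ref{thm6}, and the only remaining verification is that these classes cover every $k_0$ with $1 \leq |k_0| \leq 10^6$ (in fact up to $1{,}056{,}575$). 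Your per-$k_0$ sweep over $2\times 10^6$ integers is redundant given this observation, and your ``direct check'' phase (i) is both unnecessary and practically dubious: since $\delta_n(k_0)$ has size roughly $k_0^{2^n}$, at $N_0 = 20$ and $|k_0| \sim 10^6$ you would be testing squareness of integers with millions of digits, for millions of values of $k_0$.

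Second, and more seriously, your criterion demands that \emph{every} residue appearing in the eventual period be a quadratic non-residue mod $p$. The paper only needs this at \emph{even} $n$, because Theorem~\ref{thm3} already shows $\delta_n(k_0)$ is never a square for odd $n$ and all $k_0 \neq 0$; your stronger condition can fail for classes where the weaker one succeeds (the sequence may well pass through quadratic residues at odd indices). This is precisely what pushes you into the product-of-primes fallback, and there your argument stops being a proof: the pigeonhole/density estimate that ``heuristically'' a working obstruction exists for every $k_0$ in range is exactly the claim that must be verified, and you leave it contingent on an open-ended computation. In the paper this uniform-existence question is settled not by heuristics but by exhibiting the finite list of congruence classes (for primes up to $197$) and confirming by direct enumeration that they cover the whole range. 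The fix is straightforward: import Theorem~\ref{thm3} to restrict attention to even $n$, run the periodicity check at the level of residues $k_0 \bmod p$ rather than individual $k_0$, and replace the heuristic coverage argument with the verified covering list---at which point your plan becomes the paper's proof.
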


The lower bound of $10^6$ on $k_0$-values was proven by reducing $\delta_n(k_0)$ modulo various primes. Our final results in Section~\ref{squares} explore an algebraic method of showing that $\delta_n(k_0)$ is not a square. In particular, we prove the following theorem, which provides significant evidence toward the conjecture that $\delta_n(k_0)$ is not a square for all integers $n \geq 2$ and $k_0 \neq 0$.

\begin{theorem*}[Theorem~\ref{thm4}]
Fix $n \geq 2$. Then $\delta_n(k_0)$ is a square for at most finitely many nonzero integers $k_0$.
\end{theorem*}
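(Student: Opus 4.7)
The approach is to treat $\delta_n(k_0)$ as a polynomial $D_n(k) \in \BZ[k]$ in the variable $k$ and reduce the claim to Siegel's theorem on integral points of hyperelliptic curves. Siegel's theorem guarantees that if $f(k) \in \BZ[k]$ has at least three simple roots in $\ol{\BQ}$ (equivalently, its squarefree part has degree $\geq 3$), then the affine curve $y^2 = f(k)$ has only finitely many integer points $(k_0,y_0)$. Applied to $f = D_n$, this is precisely the desired conclusion, so the problem reduces to verifying the three-simple-roots condition for each fixed $n \geq 2$.

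The plan then has two concrete steps. First, derive an explicit recursion for $D_n(k)$ from the recursion $\phi^n = \phi \circ \phi^{n-1}$ applied to $\phi(x) = k(x^2+1)/x$; since $\delta_n$ arises from a critical-orbit quantity (the numerator of $\phi^n$ evaluated at a critical point of $\phi$, or a closely related expression), one obtains that $\deg D_n$ grows roughly like $2^{n-1}$ and that $D_n$ admits a clean inductive description. Second, show that for each $n \geq 2$ the squarefree part of $D_n(k)$ has degree at least three. I would attempt this by reduction modulo a carefully chosen small prime $p$: compute $\ol{D_n}(k) \in \mathbb{F}_p[k]$ and exhibit at least three distinct simple roots there. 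Each such root lifts by Hensel's lemma to a simple root of $D_n$ in $\ol{\BQ}_p$, hence in $\ol{\BQ}$, giving the required count.

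With the squarefree-part condition in hand, the curve $C_n : y^2 = D_n(k)$ is geometrically irreducible of genus at least one, and Siegel's theorem gives $|C_n(\BZ)| < \infty$, which yields the theorem.

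The main obstacle will be the second step. A systematic factorization of $D_n$ (for example, divisibility by a power of $k$, or by a fixed cyclotomic factor arising from the evident $x \mapsto -x$ or $x \mapsto 1/x$ symmetry of $\phi$) could, a priori, keep the squarefree part of $D_n$ bounded in degree. One therefore needs a uniform argument guaranteeing that the nontrivial part of $D_n$ is still generically squarefree and of large degree: a candidate strategy is to factor out the expected symmetry-induced factors explicitly to obtain $D_n = P_n \cdot Q_n^2$ with $P_n$ the putative squarefree part, and then use the recursion to control $\gcd(P_n, P_n')$. If a single prime $p$ does not suffice uniformly in $n$, induction on $n$ via the explicit recursion, combined with a direct computation for the base cases, should close the argument.
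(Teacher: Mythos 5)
Your reduction to Siegel's theorem is sound in outline, but there is a genuine gap at exactly the point you flag and then defer: you never verify, for any $n$, that the squarefree part of $\delta_n$ has degree at least three, and nothing in your sketch supplies that verification. This is not a routine check, and it is the whole content of the problem once Siegel is invoked. A priori $\delta_n$ could factor as $g^2h$ with $\deg h \leq 2$, in which case the curve $y^2 = \delta_n(k)$ has genus $0$ and Siegel gives nothing; worse, if such an $h$ had degree $2$ with non-square leading coefficient, Pell's equation would produce \emph{infinitely} many integral points, so the needed input is strictly more than ``$\delta_n$ is not a polynomial square.'' Your two proposed strategies --- exhibiting three simple roots of $\delta_n$ modulo a small prime and lifting by Hensel, or writing $\delta_n = P_nQ_n^2$ and controlling $\gcd(P_n, P_n')$ through the recursion --- are left as programs: no base case is computed, no uniform-in-$n$ argument is given, and you yourself note that a single prime may not suffice. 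As written, the argument does not close. (A small slip besides: $\deg \delta_n = 2^n$ by Lemma~\ref{lem1}, not roughly $2^{n-1}$.)

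The paper's proof avoids the root-structure question entirely. It establishes three elementary facts: $\deg \delta_n$ is positive and even with every exponent occurring in $\delta_n$ even (Lemmas~\ref{lem1} and~\ref{lem2}), the leading coefficient of $\delta_n$ is a square (Lemma~\ref{lem2}), and, via a parity computation on $\low(\delta_n)/2$, $\delta_n$ is not the square of any polynomial for even $n$ (Lemma~\ref{newlem}; odd $n$ is already settled by the congruence result, Theorem~\ref{thm3}). The finiteness engine replacing Siegel is Proposition~\ref{newprop}: if $f \in \BZ[k]$ has positive even degree and square leading coefficient and $f(k_0)$ is a square for infinitely many integers $k_0$, then $f$ is a polynomial square; this is proved by expanding $\sqrt{f}$ at infinity and noting its tail is $O(1/k)$ --- a quantitative gap-between-consecutive-squares argument. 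Observe that the square-leading-coefficient hypothesis is precisely what excludes the Pell-type behavior your route must also confront. To salvage your approach you would either have to carry out the squarefree-part analysis in full, or weaken the target to ``squarefree part of degree $\geq 3$, or of degree $\leq 2$ with square leading coefficient'' and dispose of the latter case by the elementary gap argument --- at which point you have essentially reproved Proposition~\ref{newprop} and the detour through Siegel becomes unnecessary.
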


\section{When the Sequence $\{a_n\}$ is Periodic}\label{period}

In this section, we discuss our results regarding Problem (1) of Section~\ref{mainresults}.

\subsection{The Size of the Stabilizer of $\{a_n\}$}

\noindent Suppose that $\phi^m(0) = 0$ for some positive integer $m$. Consider the sequence $\{a_n\}$ defined by $a_0 = 0 \text{ and } a_n = \phi(a_{n-1})$. Clearly $\{a_n\}$ is periodic, since $a_m = 0 = a_0$, and its elements belong to $K$. Also, notice that $a_i \in \phi^{i-m}(0)$ for each $i \in \{0, \dots, m-1\}$. Thus, in the tree $T(\phi)$, the sequence $\{a_n\}$ appears as an infinite branch, beginning with the root $a_0$ and proceeding $a_{m-1}, a_{m-2}, \ldots, a_1$ at levels $1, 2, \ldots, m,$ and then repeating.

We now define $\Stab(\phi)$ to be the stabilizer of the action of $\Aut(T(\phi))$ on this branch. It is clear that $G(\phi) \subset \Stab(\phi)$ because the elements of $G(\phi)$ must fix elements of $K$. According to~\cite{jone}, it is hoped that $G(\phi)$ is often a large subgroup of $\Stab(\phi)$; i.e., it is hoped that the index $[\Stab(\phi) : G(\phi)]$ is finite for many possibilities of $\phi$, so that we could take $N(\phi) = \Stab(\phi)$ for these functions $\phi$. One way of checking this is to study the \emph{Hausdorff dimension} of $G(\phi)$ in $\Stab(\phi)$, which is defined as follows.
\begin{defn}[Equation (1) of~\cite{hd}]\label{def1}
Let $T$ be a complete rooted $d$-ary tree of infinite height, and for each $n \in \BN$ let $T_n$ denote the subtree of height $n$ whose root is that of $T$. The \emph{Hausdorff dimension} $\on{hd}(H,G)$ for a subgroup $H$ in a group $G \subset \Aut(T)$ is given by
$$\on{hd}(H, G) = \lim_{n \rightarrow \infty} \frac{\log |H_n|}{\log |G_n|},$$
where for each $n$ we denote by $H_n$ and $G_n$ the quotients of $H$ and $G$, respectively, by the equivalence relation $a \sim b$ if \mbox{$a$ and $b$ act in the same way on $T_n$.}
\end{defn}
To check if a subgroup $H$ of a group $G \subset \Aut(T)$ has infinite index in $G$ (i.e. $[G : H] = \infty$), it suffices to check that $\on{hd}(H, G) < 1$. We now show that, contrary to what might be expected, $\on{hd}(G(\phi),\Stab(\phi)) < 1$, so in fact $[\Stab(\phi) : G(\phi)]$ is necessarily infinite.
\begin{proposition}\label{prop3}
For every $\phi \in K(x)$ such that $\phi^m(0) = 0$ for some positive integer $m$, we have that $\on{hd}(G(\phi),\Stab(\phi)) = 1 - d^{-m}$, and thus $[\Stab(\phi) : G(\phi)]$ is infinite.
\end{proposition}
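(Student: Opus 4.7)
The plan is to locate $G(\phi)$ inside a natural periodicity-respecting subgroup $S(\phi)$ of $\Stab(\phi)$, compute $|S(\phi)_n|$ by counting independent local permutations, and thereby pin down the Hausdorff dimension.

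I would first define $S(\phi) \subset \Stab(\phi)$ as the subgroup of automorphisms of $T(\phi)$ that act identically on the subtrees rooted at branch nodes $a_r$ and $a_s$ whenever $r \equiv s \pmod m$; the identification of these subtrees is canonical since $a_r = a_s$ as elements of $K$. Any $\sigma \in G(\phi)$ acts on elements of $K^{\ksep}$, so $\sigma$ must permute $\phi^{-k}(a_r) = \phi^{-k}(a_s)$ in the same way for every $k$, giving $G(\phi) \subset S(\phi)$. To count $|S(\phi)_n|$, encode each $\sigma \in \Aut(T_n(\phi))$ by its local permutations $\tau_v \in S_d$ (one at each non-leaf node $v$, acting on $v$'s $d$ children): membership in $\Stab(\phi)_n$ forces $\tau_v$ to fix the designated branch-child at each branch node (yielding $(d-1)!$ choices there and $d!$ elsewhere), while membership in $S(\phi)_n$ additionally forces $\tau_v = \tau_{v'}$ whenever $v$ and $v'$ are the same element of $K^{\ksep}$ appearing at two distinct levels. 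A short count using the containment $\phi^{-(j-m)}(0) \subset \phi^{-j}(0)$ and grouping the non-leaf levels $\{0,\ldots,n-1\}$ by residue modulo $m$ shows that the number of distinct non-leaf elements of $T_n(\phi)$ equals $(d^n - d^{n-m})/(d-1)$, of which exactly $m$ (namely $a_0,\ldots,a_{m-1}$) lie on the branch. Hence
\[
|S(\phi)_n| = [(d-1)!]^m \cdot (d!)^{(d^n - d^{n-m})/(d-1) - m},
\qquad
|\Stab(\phi)_n| = [(d-1)!]^n \cdot (d!)^{(d^n - 1)/(d-1) - n}.
\]
Taking logarithms and letting $n \to \infty$ yields $\on{hd}(S(\phi), \Stab(\phi)) = 1 - d^{-m}$, so the inclusion $G(\phi) \subset S(\phi)$ gives $\on{hd}(G(\phi), \Stab(\phi)) \leq 1 - d^{-m} < 1$, which already forces $[\Stab(\phi) : G(\phi)] = \infty$.

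The matching lower bound $\on{hd}(G(\phi), \Stab(\phi)) \geq 1 - d^{-m}$ needed for the stated equality is the main obstacle. The idea I would pursue is that the bulk of the $(d^n - d^{n-m})/(d-1)$ distinct non-leaf elements of $T_n(\phi)$ are fresh (non-preperiodic) preimages whose adjunction to $K$ is, generically, governed by an independent degree-$d!$ Galois extension, so collectively they produce enough Galois elements in $G(\phi)_n$ to realize all but polynomially many of the independent $\tau_v$ choices counted in $|S(\phi)_n|$. Any global multiplicative relations (for example, products of discriminants being squares) that could cut $G(\phi)_n$ down inside $S(\phi)_n$ affect $\log|G(\phi)_n|$ only by $o(d^n)$, preserving the leading asymptotic $\log|G(\phi)_n| \sim \log|S(\phi)_n|$ and hence the equality of Hausdorff dimensions.
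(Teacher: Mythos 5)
Your first half coincides with the paper's own proof. The paper defines the same subgroup $S(\phi)$, obtains the same counts (its $|S_n(\phi)| = (d!)^{(d^n-d^{n-m})/(d-1)}/d^m$ and $|\Stab_n(\phi)| = (d!)^{(d^n-1)/(d-1)}/d^n$ agree with your formulas because $d!/d = (d-1)!$), and takes the same limit to get $\on{hd}(S(\phi),\Stab(\phi)) = 1-d^{-m}$, hence $\on{hd}(G(\phi),\Stab(\phi)) < 1$ and $[\Stab(\phi):G(\phi)] = \infty$. (One small remark: your count of distinct non-leaf field elements, $(d^n-d^{n-m})/(d-1)$, tacitly assumes $m$ is the exact period of $0$, so that levels in distinct residue classes modulo $m$ are disjoint; the paper makes the same implicit assumption.)

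The genuine gap is your final paragraph. The lower bound $\on{hd}(G(\phi),\Stab(\phi)) \geq 1-d^{-m}$ is not merely unproved by your genericity heuristic --- it is false at the stated level of generality, so no argument can close this gap. Take $K=\BQ$ and $\phi(x) = x^2+2x$: then $\phi(0)=0$ (so $m=1$, $d=2$) and the critical orbit $\{-1\}$ avoids $0$, so the tree is complete; yet the paper's own Theorem~\ref{thm9} computes $|G_n(\phi)| = 2^{n-1}$, giving $\on{hd}(G(\phi),\Stab(\phi)) = \lim_{n\to\infty}(n-1)/(2^n-n-1) = 0 \neq 1/2 = 1-d^{-m}$, and the cases $k=-2,4$ behave the same way. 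The step that fails in your sketch is the claim that global relations cut $\log|G_n(\phi)|$ down only by $o(d^n)$: for such post-critically finite examples the ``fresh'' preimages are wildly non-generic, $\log|G_n(\phi)|$ grows linearly in $n$ rather than like $d^n$, and nothing in the hypotheses of the proposition rules this out. What the paper actually proves --- and all that is true in this generality --- is the equality $\on{hd}(S(\phi),\Stab(\phi)) = 1-d^{-m}$ together with the containment $G(\phi)\subset S(\phi)$; its proof concludes only $\on{hd}(G(\phi),\Stab(\phi)) < 1$, and the equality for $G(\phi)$ asserted in the statement should be read as the dimension of $S(\phi)$ (or as an upper bound for $G(\phi)$). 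So your instinct to flag the lower bound as the main obstacle was right, but the correct resolution is to drop it, not to attempt it.
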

\begin{proof}
Suppose $n \geq m$. For each $n$, we write $G_n(\phi)$ for the Galois group of the $n^\mathrm{th}$ iterate of $\phi$ and $\Stab_n(\phi)$ for the quotient of $\Stab(\phi)$ by the equivalence relation $a \sim b$ if $a$ and $b$ act in the same way on $T_n(\phi)$. We first compute $|\Stab_n(\phi)|$. Notice that the elements of $\Stab_n(\phi)$ can be enumerated as follows: $\Stab_n(\phi)$ is the product over all $i \in \{1, \dots, n\}$ of the groups $A_i$ of all automorphisms of the forest of $d-1$ subtrees of $T_n(\phi)$ rooted at the children of $a_{i-1}$ other than $a_i$. It is easy to see that $$|A_i| = (d-1)! \cdot \left[(d!)^{\frac{d^{n-i} - 1}{d-1}}\right]^{d-1} = \frac{(d!)^{d^{n-i}}}{d}.$$
We then have that
$$|\Stab_n(\phi)| = \prod_{i = 1}^n |A_i| = \frac{(d!)^{\sum_{i = 1}^n d^{n-i}}}{d^n} = \frac{(d!)^{\frac{d^n - 1}{d-1}}}{d^n}.$$
We next compute an upper bound on $|G_n(\phi)|$. For each $n$, define $S_n(\phi) \subset \Stab_n(\phi)$ to be the group of automorphisms of $T_n(\phi)$ that act in the same way on the subtrees rooted at $a_r$ and $a_s$ whenever $r \equiv s \pmod m$. It is clear that $G_n(\phi) \subset S_n(\phi)$. To show that $\on{hd}(G(\phi),\Stab(\phi)) < 1$, it suffices to show that $\on{hd}(S(\phi),\Stab(\phi)) < 1$, where $S(\phi)$ is the inverse limit of the groups $S_n(\phi)$. We now compute $|S_n(\phi)|$ for each $n$. Noticing that $S_n(\phi)$ is isomorphic to the product $\prod_{i = 1}^m A_i$, we have that
$$|S_n(\phi)| = \prod_{i = 1}^m |A_i| = \frac{(d!)^{\sum_{i = 1}^m d^{n-i}}}{d^m} = \frac{(d!)^{\frac{d^n - d^{n-m}}{d-1}}}{d^m}.$$
We then compute the Hausdorff dimension $\on{hd}(S(\phi),\Stab(\phi))$ in $\Stab(\phi)$ as
\begin{eqnarray*}
\on{hd}(S(\phi),\Stab(\phi)) & = & \lim_{n \rightarrow \infty} \frac{\log |S_n(\phi)|}{\log |\Stab_n(\phi)|} \\
& = & \lim_{n \rightarrow \infty} \frac{(\log d!) \cdot \frac{d^n - d^{n-m}}{d-1} - m}{(\log d!) \cdot \frac{d^n - 1}{d-1} - n} \\
& = & 1 - d^{-m}.
\end{eqnarray*}
Since $m \geq 1, d \geq 2$, we have that $\on{hd}(S(\phi),\Stab(\phi)) =  1 - d^{-m} < 1$, so we have that $\on{hd}(G(\phi),\Stab(\phi)) < 1$. It follows that $[\Stab(\phi) : G(\phi)]$ is necessarily infinite.
\end{proof}
\begin{remark}
It follows from Proposition~\ref{prop3} that $[\Aut(T(\phi)) : G(\phi)] = \infty$ for $\phi$ satisfying the conditions of Problem (1), as stated in Section~\ref{intro}. Also, from the proof of Proposition~\ref{prop3}, one can easily determine that
$$[\Stab_n(\phi) : S_n(\phi)] = \frac{(d!)^{\frac{d^{n-m}-1}{d-1}}}{d^{n-m}},$$
which is trivially bounded below by
$$(d!)^{\frac{d^{n-m}-1}{d-1} - (n-m)}.$$
\end{remark}

\subsection{The Size of $G(\phi)$ as a Subgroup of $S(\phi)$}

\noindent One question that arises from the proof of Proposition~\ref{prop3} is whether $[S(\phi) : G(\phi)]$ is finite, a situation that would allow us to take $N(\phi) = S(\phi)$. Restricting to the case of $K = \BQ$, we provide an example of when $[S(\phi) : G(\phi)]$ is in fact equal to $1$. Notice that \emph{very} few examples exist where one can precisely determine $G(\phi)$; one can refer to Section 2.1 of~\cite{jone} for more discussion on this point.

\begin{example}\label{ex2}
Take $K = \BQ$ and $\phi(x) = x^2 + x \in \BQ(x)$. Then, $m = 1$ and we have $G_n(\phi) \hookrightarrow S_n(\phi) \simeq \Aut(T_n'(\phi)),$ where $T_n'(\phi)$ denotes the subtree of $T_n(\phi)$ rooted at $-1$, the nonzero child of the root of $T_n(\phi)$. Observe that for each $n \geq 1$ we have $G_n(\phi) = \Gal(\BQ(\phi^{-n}(0))/\BQ) = \Gal(\BQ(\phi^{-(n-1)}(-1))/\BQ)$. Now let $\mu$ be the M\"{o}bius transformation defined by $\mu(x) = x - 1$. Then taking $\psi = \mu^{-1} \circ \phi \circ \mu$, we have an equality of Galois groups
\begin{eqnarray*}
\Gal(\BQ(\phi^{-(n-1)}(-1))/\BQ) & = & \Gal(\BQ(\psi^{-(n-1)}(\mu^{-1}(-1)))/\BQ) \\
& = & \Gal(\BQ(\psi^{-(n-1)}(0))/\BQ).
\end{eqnarray*}
But notice that $\psi$ is precisely the polynomial corresponding to Sylvester's sequence, which was studied in~\cite{odo2}. Indeed, we have
$$\psi(x) = [(x-1)^2 + (x-1)] + 1 = x^2 - x + 1.$$
From~\cite{odo2}, we know that for each $n$ we have an isomorphism
$$\Gal(\BQ(\psi^{-(n-1)}(0))/\BQ) \simeq \Aut(T_{n-1}(\psi)) = \Aut(T_n'(\phi)).$$
It follows that for each $n \geq 1$ we have the \emph{equality}, not just an injection,
$$G_n(\phi) = S_n(\phi).$$
Because $G(\phi)$ is the inverse limit of $\{G_n(\phi)\}$ and because $S(\phi)$ is the inverse limit of $\{S_n(\phi)\}$, we have that $G(\phi) = S(\phi)$. Thus, $[S(\phi) : G(\phi)] = 1$.
\end{example}

In the case of $K = \BQ$, the method presented in Example~\ref{ex2} can be used to analyze any map of the form $\phi(x) = x^2+ kx \in \BZ[x]$. We find that the following result, which can be deduced from p.~7 of~\cite{jone} and from Theorem 1.2 of~\cite{jone2}, is useful for studying rational functions of the form $\phi(x) = x^2 + kx$.

\begin{theorem}[\cite{jone2,jone}]\label{thm7}
Let $K = \BQ$, and let $\phi(x) \in \BZ[x]$ be
$$\phi(x) = x^2 - kx + k.$$
Then if $k \in \BZ\setminus \{-2,0,2,4\}$, we have that $[\Aut(T(\phi)) : G(\phi)] < \infty$. Moreover, for all $k, a_0 \in \BZ$, we have that $\on{nd}(\phi, a_0) = 0$.
\end{theorem}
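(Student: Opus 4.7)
First I would re-center $\phi(x) = x^2 - kx + k$ at its critical point $\gamma = k/2$, writing $\phi(x) = (x - \gamma)^2 + c$ with $c = k - k^2/4$. Theorem 1.2 of \cite{jone2} provides a sufficient criterion for the finite-index statement $[\Aut(T(\phi)) : G(\phi)] < \infty$ for monic quadratic $\phi \in \BZ[x]$: it holds once (i) $\phi$ is not post-critically finite, and (ii) a certain non-square condition is satisfied by the sequence of adjusted critical-orbit values $\{\phi^n(\gamma)\}_{n \ge 1}$. Specializing to $\phi(x) = x^2 - kx + k$, a direct computation shows that the integer values of $k$ on which these conditions fail are exactly $\{-2, 0, 2, 4\}$: at each of these $k$ the critical orbit is eventually fixed ($\phi(0) = 0$ for $k=0$; $\phi(1) = 1$ for $k=2$; $\phi(4) = 4$ for $k=4$; $\phi(1) = 1$ for $k=-2$), whereas for every other $k$ the orbit escapes any finite set and, by the analysis of \cite{jone2}, the non-square condition holds. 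Theorem 1.2 of \cite{jone2} therefore gives the first claim for $k \notin \{-2, 0, 2, 4\}$.

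For the density statement I would combine the finite-index result with Odoni's criterion recorded in the introduction: if
\[
\lim_{n \to \infty} \frac{|\{g \in G_n(\phi) : g \text{ fixes a root of } \phi^n\}|}{|G_n(\phi)|} = 0,
\]
then $\on{nd}(\phi, a_0) = 0$ for every $a_0 \in \BZ$. When $[\Aut(T(\phi)) : G(\phi)] < \infty$, the fixed-leaf proportion in $G_n(\phi)$ is at most a constant times the analogous proportion in $\Aut(T_n(\phi))$, and a short recursion via the wreath-product structure of $\Aut(T_n)$ shows that this latter proportion tends to zero. This disposes of every $k$ outside the exceptional quartet. For the four exceptional values I would argue directly: at $k = 0$ and $k = 4$ the polynomial is a perfect square, so $\{a_n\}$ is (up to translation) a pure-power sequence and $P_\phi(a_0)$ is finite; at $k = 2$ the polynomial is the Fermat polynomial, for which Odoni \cite{odo2} proved $\on{nd}(\phi, a_0) = 0$ directly; and at $k = -2$ one can argue in parallel to the Fermat case since the critical orbit is again eventually fixed at a rational integer.

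The main obstacle is the uniform verification of condition (ii) across the infinite family $k \notin \{-2, 0, 2, 4\}$. Jones' strategy in \cite{jone2} is, for each such $k$, to exhibit a prime $p$ modulo which some critical-orbit value $\phi^n(\gamma)$ has odd $p$-adic valuation, ruling out squareness; carrying this through uniformly in $k$ is the delicate point and is what pins the exceptional set down to exactly these four integers. A secondary subtlety lies in the density step at $k = -2$, where one cannot invoke finite index and must instead produce, by direct inspection of the tower $\{G_n(\phi)\}$, a decaying fixed-leaf proportion to feed into Odoni's criterion.
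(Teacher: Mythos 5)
You should first note that the paper does not actually prove this statement: Theorem~\ref{thm7} is imported wholesale, with the text saying only that it ``can be deduced from p.~7 of~\cite{jone} and from Theorem 1.2 of~\cite{jone2}.'' So your sketch can only be compared against Jones's arguments, and at that level your architecture is essentially the right reconstruction: the exceptional set $\{-2,0,2,4\}$ is exactly the post-critically finite locus (conjugating $x^2-kx+k$ to the form $x^2+c$ gives $c=k/2-k^2/4$, and the rational PCF values $c\in\{0,-2\}$ force $k\in\{0,2\}\cup\{-2,4\}$), the finite-index part is Jones's non-squareness criterion on the adjusted critical orbit, and your density step for non-exceptional $k$ is sound: if $[\Aut(T(\phi)):G(\phi)]=m<\infty$, the proportion of elements of $G_n(\phi)$ fixing a root of $\phi^n$ is at most $m$ times the proportion in $\Aut(T_n(\phi))$, which tends to $0$ by the standard wreath-product recursion, and Odoni's criterion then applies. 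Deferring the uniform non-square verification to \cite{jone2} is also legitimate relative to this paper, since the paper itself only cites it.

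The genuine gap is in your treatment of the exceptional values. Your claim that at $k=4$ the orbit is ``a pure-power sequence and $P_\phi(a_0)$ is finite'' is false: $\phi(x)=(x-2)^2$ is conjugate via $x\mapsto x+2$ to the Chebyshev map $x^2-2$, not to a power map. For instance with $a_0=5$ one gets $a_n=c_n^2$ where $c_1=3$ and $c_{n+1}=c_n^2-2$; since the $c_n$ are odd and $c_{n+1}\equiv -2 \pmod{c_n}$, consecutive terms are coprime and $P_\phi(5)$ is infinite. Density zero still holds, but by the classical cyclotomic argument: writing $c_n=\alpha^{2^n}+\alpha^{-2^n}$, a prime $p\mid c_n$ forces $\ord(\alpha)=2^{n+2}$ in $\mathbb{F}_{p^2}^*$, hence $p\equiv\pm1\pmod{2^{n+1}}$, and the tail densities are summable. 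Similarly, $k=-2$ is \emph{not} ``parallel to the Fermat case'': conjugating by $x\mapsto x-1$ shows $x^2+2x-2$ is also Chebyshev ($x^2-2$), whereas the Fermat template belongs to $k=2$, which is conjugate to the power map $x^2$ via $x\mapsto x+1$ (whence $a_n=(a_0-1)^{2^n}+1$). At $k=-2$ one has $p\mid a_n$ precisely when $\ord(\alpha)=3\cdot 2^{n+1}$, giving $p\equiv\pm1\pmod{2^n}$ up to the factor of $3$, and density zero follows at once --- note the same $3\cdot 2^n$ structure appears in the paper's Theorem~\ref{thm9}(1) for the companion family $x^2+kx$. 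Your alternative plan of producing ``a decaying fixed-leaf proportion by direct inspection of the tower'' at $k=-2$ is left entirely unexecuted, and the congruence route is both simpler and standard. A minor slip: \cite{odo2} concerns Sylvester's sequence $w_{n+1}=1+w_1\cdots w_n$, not the Fermat numbers; the Fermat density-zero result is the classical congruence $p\equiv 1\pmod{2^{n+2}}$ for divisors of $2^{2^n}+1$ and should not be attributed to that paper.
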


Using the first part of Theorem~\ref{thm7}, we obtain the following result.

\begin{theorem}\label{thm8}
Let $K = \BQ$, and let $\phi(x) \in \BZ[x]$ \mbox{be of the form}
$$\phi(x) = x^2 + kx.$$
Then if $k \in \BZ\setminus \{-2,0,2,4\}$, we have that $[S(\phi) : G(\phi)] < \infty$.
\end{theorem}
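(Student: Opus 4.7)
The plan is to mimic the strategy of Example~\ref{ex2}, replacing the Sylvester polynomial by the polynomial $\psi(x)=x^2-kx+k$ of Theorem~\ref{thm7}. For $\phi(x)=x^2+kx$ we have $\phi(0)=0$, so the periodic branch is the constant sequence $a_n\equiv 0$ and the period is $m=1$. The unique nonzero child of the root in $T(\phi)$ is $-k$, since $\phi^{-1}(0)=\{0,-k\}$. Consequently, in the notation of the proof of Proposition~\ref{prop3}, $S_n(\phi)\simeq \Aut(T_{n-1}'(\phi))$, where $T_{n-1}'(\phi)$ is the subtree of $T_n(\phi)$ of height $n-1$ rooted at $-k$.

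Next I would conjugate by the M\"{o}bius transformation $\mu(x)=x-k$. A direct computation gives
\[
\mu^{-1}\circ\phi\circ\mu\,(x)=(x-k)^2+k(x-k)+k=x^2-kx+k=\psi(x),
\]
and $\mu^{-1}(-k)=0$. Since $\mu$ is defined over $\BQ$, it induces a $\BQ$-isomorphism from $T_{n-1}'(\phi)$ onto $T_{n-1}(\psi)$, the tree of iterated $\psi$-preimages of $0$ truncated at level $n-1$. Thus for every $n\geq 1$,
\[
G_n(\phi)=\Gal\!\bigl(\BQ(\phi^{-n}(0))/\BQ\bigr)=\Gal\!\bigl(\BQ(\phi^{-(n-1)}(-k))/\BQ\bigr)=\Gal\!\bigl(\BQ(\psi^{-(n-1)}(0))/\BQ\bigr)=G_{n-1}(\psi),
\]
and under the same identification $S_n(\phi)\simeq \Aut(T_{n-1}(\psi))$, with the inclusion $G_n(\phi)\hookrightarrow S_n(\phi)$ corresponding to the inclusion $G_{n-1}(\psi)\hookrightarrow \Aut(T_{n-1}(\psi))$.

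Given this dictionary, the theorem reduces immediately to the first part of Theorem~\ref{thm7}. For $k\in\BZ\setminus\{-2,0,2,4\}$, Theorem~\ref{thm7} provides a finite bound $M=[\Aut(T(\psi)):G(\psi)]<\infty$. Since this index is the supremum over $n$ of $[\Aut(T_{n-1}(\psi)):G_{n-1}(\psi)]=[S_n(\phi):G_n(\phi)]$, passing to the inverse limit gives $[S(\phi):G(\phi)]\leq M<\infty$, as desired.

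The argument is essentially a clean translation, so there is no serious obstacle beyond checking that the conjugation identifies the two trees and Galois groups compatibly. The only mildly delicate point is justifying that the index on the inverse limits is controlled by the supremum of the finite-level indices; this is standard for closed subgroups of profinite groups, but I would state it explicitly to conclude.
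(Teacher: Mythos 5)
Your proposal is correct and follows essentially the same route as the paper: conjugating by $\mu(x)=x-k$ to identify $G_n(\phi)$ with $G_{n-1}(\psi)$ for $\psi(x)=x^2-kx+k$, identifying $S_n(\phi)$ with $\Aut(T_{n-1}(\psi))$, and invoking Theorem~\ref{thm7}. The only (cosmetic) difference is at the limit step, where you bound $[S(\phi):G(\phi)]$ by the supremum of the finite-level indices, while the paper phrases the same conclusion via index-one identifications of the inverse limits and multiplicativity of the index.
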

\begin{proof}
We have $G_n(\phi) \hookrightarrow S_n(\phi) \simeq \Aut(T_n'(\phi)),$ where $T_n'(\phi)$ denotes the subtree of $T_n(\phi)$ rooted at $-k$, the nonzero child of the root of $T_n(\phi)$. For each $n \geq 1$, we have $G_n(\phi) = \Gal(\BQ(\phi^{-n}(0))/\BQ) = \Gal(\BQ(\phi^{-(n-1)}(-k))/\BQ)$. Let $\mu$ be the M\"{o}bius transformation defined by $\mu(x) = x - k$. Then taking $\psi = \mu^{-1} \circ \phi \circ \mu$ as in Example~\ref{ex2}, we have that $\Gal(\BQ(\phi^{-(n-1)}(-k))/\BQ) =  \Gal(\BQ(\psi^{-(n-1)}(\mu^{-1}(-k)))/\BQ) =\Gal(\BQ(\psi^{-(n-1)}(0))/\BQ)$, from which we deduce that $$G_n(\phi) = G_{n-1}(\psi) \hookrightarrow \Aut(T_{n-1}(\psi)) \simeq \Aut(T_n'(\phi)) \simeq S_n(\phi).$$
Consider the inverse systems $\{G_n(\phi)\} \to G(\phi)$, $\{G_{n-1}(\psi)\}$ $\to G(\psi)$, $\{\Aut(T_{n-1}(\psi))\}$ $\to \Aut(T(\psi))$, and $\{S_n(\phi)\} \to S(\phi)$, and observe that we have the equalities $[G(\psi) : G(\phi)] = [S(\phi) : \Aut(T(\psi))] = 1$. Now suppose $k \notin \{-2,0,2,4\}$. Notice that $\psi(x) = x^2 - kx + k$, so since $k \notin \{-2,0,2,4\}$, Theorem~\ref{thm7} implies that $[\Aut(T(\psi)) : G(\psi)] < \infty$. It follows from the multiplicativity of the group index that $[S(\phi) : G(\phi)] < \infty$.
\end{proof}
It is natural to consider what can be deduced in the case that $k \in \{-2,0,2,4\}$. If $k = 0$, then $G(\phi)$ is trivial, so necessarily we have $[S(\phi) : G(\phi)] = \infty$. The question of whether $[S(\phi) : G(\phi)]$ is finite is more difficult to answer when $k \in \{-2,2,4\}$. The following result handles each of these three cases individually.

\begin{theorem}\label{thm9}
Let $K = \BQ$, and let $\phi(x) \in \BZ[x]$ \mbox{be of the form}
$$\phi(x) = x^2 + kx.$$
Then if $k \in \{-2,2,4\}$, we have that $[S(\phi) : G(\phi)] = \infty$. Specifically:
\begin{enumerate}
\item When $k = -2$, $G(\phi)$ has index $2$ in a subgroup of $\Aut(T(\phi))$ that is isomorphic to $\underset{\leftarrow n}\lim \, (\BZ/(3 \cdot 2^n)\BZ)^*$.
\item When $k = 2$, $G(\phi) \simeq \BZ/2\BZ \times \BZ_2$, where $\BZ_2$ is the ring of $2$-adic integers.
\item When $k = 4$, $G(\phi)$ has index $2$ in a subgroup of $\Aut(T(\phi))$ that is isomorphic to $\BZ/2\BZ \times \BZ_2$.
\end{enumerate}
\end{theorem}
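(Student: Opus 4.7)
The plan is to follow the strategy used in Theorem~\ref{thm8}: conjugate $\phi$ by $\mu(x) = x - k$ to obtain $\psi = \mu^{-1} \circ \phi \circ \mu$ with $\psi(x) = x^2 - kx + k$, and use the identification $G_n(\phi) = G_{n-1}(\psi)$ established in the proof of Theorem~\ref{thm8}. This reduces each of the three exceptional cases to explicitly computing the absolute iterated preimage Galois group $G(\psi)$. The key structural observation, which is presumably the reason Theorem~\ref{thm7} had to exclude these values of $k$, is that for $k \in \{-2, 2, 4\}$ the fields $\BQ(\psi^{-n}(0))$ are forced into abelian (indeed cyclotomic) extensions of $\BQ$, making $G(\psi)$ essentially procyclic-with-involution and hence vastly smaller than the iterated wreath product $\Aut(T(\psi))$.

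I would treat the three cases as follows. For $k = 2$, $\psi(x) = (x-1)^2 + 1$, so $\psi^{-1}(0) = \{1 \pm i\}$; iterating, a short induction using that a square root of a primitive $2^{n+1}$-th root of unity is primitive of order $2^{n+2}$ forces $\BQ(\psi^{-n}(0)) = \BQ(\zeta_{2^{n+1}})$, which yields $G(\psi) \simeq \underset{\leftarrow n}\lim (\BZ/2^{n+1}\BZ)^* \simeq \BZ/2\BZ \times \BZ_2$ and hence the asserted description of $G(\phi)$. For $k = -2$, $\psi(x) = x^2 + 2x - 2$ has roots $-1 \pm \sqrt{3}$, and the identity $2 + \sqrt{3} = \tfrac{1}{2}(1 + \sqrt{3})^2$, which already places $\sqrt{2 + \sqrt{3}} = \tfrac{\sqrt{6} + \sqrt{2}}{2}$ inside $\BQ(\zeta_{24})$, can be propagated by induction on the level to give $\BQ(\psi^{-n}(0)) \subset \BQ(\zeta_{3 \cdot 2^{n+1}})$; tracking which automorphisms of this cyclotomic field preserve the distinguished branch through $\{a_n\}$ cuts out the required index-$2$ subgroup of $\underset{\leftarrow n}\lim (\BZ/(3 \cdot 2^n)\BZ)^*$. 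For $k = 4$, $\psi(x) = (x-2)^2$ is degenerate, so I would work directly with $\phi$: the iterated preimages of $0$ are built from nested radicals of the form $\sqrt{2 \pm \sqrt{2 \pm \cdots}}$, which by standard cyclotomic identities lie in a tower of cyclotomic fields and lead to a group of shape $\BZ/2\BZ \times \BZ_2$, again with a residual index-$2$ obstruction coming from the choice of branch.

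Once such an explicit description of $G(\phi)$ is in hand in each case, the assertion $[S(\phi) : G(\phi)] = \infty$ is immediate: by the count in Proposition~\ref{prop3}, $|S_n(\phi)|$ grows doubly exponentially in $n$, whereas every candidate group above has $|G_n(\phi)|$ of size at most $2^{n + O(1)}$, so in fact $\on{hd}(G(\phi), S(\phi)) = 0$, which is strictly less than $\on{hd}(S(\phi), \Stab(\phi)) = 1 - d^{-m}$ and a fortiori forces infinite index.

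The step I expect to be the main obstacle is the inductive cyclotomic containment in the $k = -2$ case: verifying that the nested radicals $\sqrt{r + s\sqrt{3}}$ appearing at each new level continue to live in $\BQ(\zeta_{3 \cdot 2^{N}})$ without any unintended degree collapse requires a careful local analysis at the primes $2$ and $3$, after which one must still pin down precisely which index-$2$ subgroup of the cyclotomic Galois group is cut out by the branch-compatibility condition at $0$. The $k = 4$ case involves a similar, but technically lighter, determination of the branch subgroup inside $(\BZ/2^{n+1}\BZ)^*$.
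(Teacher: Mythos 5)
Your overall route coincides with the paper's: reduce via a M\"obius conjugation to a recognizable quadratic, identify the iterated preimage fields inside cyclotomic fields, obtain $|G_n(\phi)| = 2^{n+O(1)}$ against $|S_n(\phi)| = 2^{2^{n-1}-1}$, and conclude $\on{hd}(G(\phi),S(\phi)) = 0 < 1$, hence infinite index. (Your side remark comparing this to $\on{hd}(S(\phi),\Stab(\phi)) = 1 - d^{-m}$ is irrelevant; $\on{hd}(G(\phi),S(\phi)) < 1$ alone suffices.) The paper uses per-case conjugations rather than your uniform $\mu(x) = x-k$: for $k=-2$ it conjugates by $x+1$ to get $\psi(x) = x^2-2$ and computes $G_n(\phi) = \Gal(\BQ(\psi^{-(n-1)}(1))/\BQ)$; for $k=2$ it conjugates by $x-1$ to get $x^2$; and for $k=4$ it first descends two levels through the unique child $-2$ of $-4$ and conjugates by $x+2$ to get $x^2-2$ again. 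Your $\psi(x) = x^2-kx+k$ is a further shift of the same maps, so this difference is cosmetic; your $k=2$ computation ($\BQ(\psi^{-n}(0)) = \BQ(\zeta_{2^{n+1}})$, giving $\BZ/2\BZ \times \BZ_2$) and your handling of the degenerate $k=4$ case via the nested radicals $\sqrt{2\pm\sqrt{2\pm\cdots}}$ agree in substance with the paper.

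There is, however, one genuine misstep: in the $k=-2$ and $k=4$ cases you propose to cut out the index-$2$ subgroup by ``tracking which automorphisms preserve the distinguished branch through $\{a_n\}$.'' That step fails as stated. Here $m=1$, so the branch is the all-zero branch, consisting of rational nodes preserved by \emph{every} automorphism under consideration; no index-$2$ condition can be extracted from branch preservation. The paper's actual mechanism is reality, i.e.\ complex conjugation: the roots of $\psi^{n}(x) = 1$ for $\psi(x) = x^2-2$ are the real numbers $\zeta + \zeta^{-1}$ with $\zeta$ a primitive $3\cdot 2^{n+1}$-th root of unity, giving the sandwich $\BQ(\zeta_{3\cdot2^n} + \zeta_{3\cdot2^n}^{-1}) \subseteq \BQ(\psi^{-(n-1)}(1)) \subseteq \BQ(\zeta_{3\cdot2^n})$; since the top field is not real while the middle field is, the middle field equals the maximal real subfield, which has index $2$. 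This also dissolves what you flagged as your main obstacle: no local analysis at $2$ and $3$ is needed to rule out degree collapse, because the sandwich plus reality pins the preimage field exactly, yielding $|G_n(\phi)| = 2^{n-1}$ on the nose. The same reality argument (with $\BQ(\zeta_{2^n})$) supplies the index-$2$ statement for $k=4$. With the index-$2$ mechanism corrected from branch preservation to complex conjugation, your plan becomes the paper's proof.
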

\begin{proof}
Much of the argument used to prove Theorem~\ref{thm8} still applies: our method still consists of reducing the question to computing the Galois action on a subtree of $T(\phi)$, and then employing a direct analysis. Indeed, regardless of our choice of $k$, we have $G_n(\phi) \hookrightarrow S_n(\phi) \simeq \Aut(T_n'(\phi)),$ where $T_n'(\phi)$ denotes the subtree of $T_n(\phi)$ rooted at the ``$-k$'' child of the root of $T_n(\phi)$. For each $n \geq 1$, we have $G_n(\phi) = \Gal(\BQ(\phi^{-n}(0))/\BQ) = \Gal(\BQ(\phi^{-(n-1)}(-k))/\BQ)$.
\begin{enumerate}
\item Suppose $k = -2$. Then $\phi(x) = x^2 - 2x$, and conjugating $\phi$ by the M\"{o}bius transformation $\mu(x) = x +1$, we obtain $\psi(x) = (\mu^{-1}\circ\phi\circ\mu)(x) = x^2 - 2$. We then have the equality of Galois groups $G_n(\phi) = \Gal(\BQ(\phi^{-(n-1)}(2))/\BQ) = \Gal(\BQ(\psi^{-(n-1)}(1))/\BQ)$. Notice that the roots of the equation $\psi^n(x) = 1$ are always real and that $\zeta_{3 \cdot 2^{n+1}} + \zeta_{3 \cdot 2^{n+1}}^{-1}$ is a root of $\psi^n(x) = 1$ for each $n$, where $\zeta_{m}$ is a primitive $m^{\mathrm{th}}$ root of unity. Further observe that we have the inclusions
    $$\BQ(\zeta_{3 \cdot 2^n} + \zeta_{3 \cdot 2^n}^{-1}) \hookrightarrow \BQ(\psi^{-(n-1)}(1))/\BQ \hookrightarrow \BQ(\zeta_{3 \cdot 2^n})$$
    and that the degree of the composite extension is $[\BQ(\zeta_{3 \cdot 2^n}) : \BQ(\zeta_{3 \cdot 2^n} + \zeta_{3 \cdot 2^n}^{-1})] = 2$. Since $\BQ(\zeta_{3 \cdot 2^n})$ is not a real extension of $\BQ$, we have that $\BQ(\zeta_{3 \cdot 2^n} + \zeta_{3 \cdot 2^n}^{-1}) = \BQ(\psi^{-(n-1)}(1))/\BQ$. Then $[\BQ(\zeta_{3 \cdot 2^n}) : \BQ(\psi^{-(n-1)}(1))/\BQ] = 2$, so we find that
   $$|G_n(\phi)| = |\Gal(\BQ(\psi^{-(n-1)}(1))/\BQ)| = \frac{|\Gal(\BQ(\zeta_{3 \cdot 2^n}))|}{2} = \frac{2^n}{2} = 2^{n-1}.$$
   Thus, we can compute the Hausdorff dimension of $G(\phi)$ in $S(\phi)$ to be
    $$\on{hd}(G(\phi), S(\phi)) = \lim_{n \rightarrow \infty} \frac{\log 2^{n-1}}{\log 2^{2^{n-1}-1}} = \lim_{n \rightarrow \infty} \frac{n-1}{2^{n-1} - 1} = 0 < 1,$$
    so indeed $[S(\phi) : G(\phi)] = \infty$. We also have that $G(\phi)$ embeds with index $2$ into the inverse limit of the inverse system $\{\Gal(\BQ(\zeta_{3 \cdot 2^n}))\} \simeq \{(\BZ/(3 \cdot 2^n)\BZ)^*\}$, since for each $n$ we have
    $$\Gal(\BQ(\zeta_{3 \cdot 2^n})) \simeq (\BZ/(3 \cdot 2^n)\BZ)^*.$$
\item Suppose $k = 2$. Then $\phi(x) = x^2 + 2x$, and conjugating $\phi$ by the M\"{o}bius transformation $\mu(x) = x - 1$, we obtain $\psi(x) = (\mu^{-1} \circ \phi \circ \mu)(x) = x^2$. We then have the equality of Galois groups $G_n(\phi) = \Gal(\BQ(\phi^{-(n-1)}(-2))/\BQ) = \Gal(\BQ(\psi^{-(n-1)}(-1))/\BQ)$. Thus, to determine $G_n(\phi)$, it suffices to determine $\Gal(\BQ(\psi^{-(n-1)}(-1))/\BQ)$, which is the Galois group of the polynomial $x^{2^{n-1}} + 1$. Notice that the Galois group of $x^{2^n} - 1$ is $\BZ/2\BZ \times \BZ/2^{n-2}\BZ$ when $n \geq 2$, and a splitting field for both polynomials $x^{2^{n-1}} + 1$ and $x^{2^n} - 1$ is $\BQ(\zeta_{2^n})$. It follows that we have the isomorphism $$G_n(\phi) \simeq \BZ/2\BZ \times \BZ/2^{n-2}\BZ$$
    for each $n$. Thus $|G_n(\phi)| = 2^{n-1}$, and from the proof of Proposition~\ref{prop3} we have that $|S_n(\phi)| = 2^{2^{n-1} -1}$. Thus, we compute the Hausdorff dimension of $G(\phi)$ in $S(\phi)$ to be
    $$\on{hd}(G(\phi), S(\phi)) = \lim_{n \rightarrow \infty} \frac{\log 2^{n-1}}{\log 2^{2^{n-1}-1}} = \lim_{n \rightarrow \infty} \frac{n-1}{2^{n-1} - 1} = 0 < 1,$$
    so indeed $[S(\phi) : G(\phi)] = \infty$. We also have that $G(\phi)$ is the inverse limit of the system $\{G_n(\phi)\} \simeq \{\BZ/2\BZ \times \BZ/2^{n-2}\BZ\}$, which is $\BZ/2\BZ \times \BZ_2$, where $\BZ_2$ is the ring of $2$-adic integers.
\item Suppose $k = 4$. Then $\phi(x) = x^2 + 4x$, and the nonperiodic branch of $T(\phi)$ is rooted at $-4$, the nonzero child of the root of $T(\phi)$. The only child of $-4$ is $-2$, so we have that $G_n(\phi) = \Gal(\BQ(\phi^{-(n-2)}(-2))/\BQ)$. Conjugating $\phi$ by the M\"{o}bius transformation $\nu(x) = x +2$, we obtain $\theta(x) = (\nu^{-1}\phi\nu)(x) = x^2 - 2$. We then have the equality of Galois groups $G_n(\phi) = \Gal(\BQ(\theta^{-(n-2)}(0))/\BQ)$. By an argument analogous to the one used in the proof of the first case above, we find that $|G_n(\phi)| =  2^{n-2}$. Thus, we compute the Hausdorff dimension of $G(\phi)$ in $S(\phi)$ to be
    $$\on{hd}(G(\phi), S(\phi)) = \lim_{n \rightarrow \infty} \frac{\log 2^{n-2}}{\log 2^{2^{n-1}-1}} = \lim_{n \rightarrow \infty} \frac{n-2}{2^{n-1} - 1} = 0 < 1,$$
    so indeed $[S(\phi) : G(\phi)] = \infty$. We also have that $G(\phi)$ embeds with index $2$ into the inverse limit of the system $\{\Gal(\BQ(\zeta_{2^n}))\} \simeq \{\BZ/2\BZ \times \BZ/2^{n-2}\BZ\}$, which is $\BZ/2\BZ \times \BZ_2$.
\end{enumerate}
This concludes the proof of Theorem~\ref{thm9}.
\end{proof}
\begin{remark}
The third case in Theorem~\ref{thm9} serves as a companion to results presented in~\cite{reu}, where the Galois groups of iterates of polynomials $\phi(x) = (x + t)^2 - t - 2$ are studied for $t \in \BZ$ and $|t| > 2$. Indeed, notice that $\phi(x) = (x + t)^2 - t - 2 =  x^2 + 4x$ when $t = 2$.
\end{remark}

It remains to be shown whether the stochastic methods discussed in~\cite{jone} can be extended to obtain zero-density results for $P_\phi(a_0)$ when the forward orbit of $0$ under $\phi$ is periodic. If such a generalization is possible, then Theorems~\ref{thm8} and~\ref{thm9} can be utilized to obtain zero-density results for $P_\phi(a_0)$ in the case of $\phi(x) = x^2 + kx$. Nevertheless, the fact that $x^2 + kx$ is conjugate to $x^2 - kx + k$ taken together with the fact that we already have zero-density results for $P_\phi(a_0)$ when $\phi(x) = x^2 - kx + k$ (see the second part of Theorem~\ref{thm7}) suggests that it may be easy to deduce zero-density results for the family of functions $\phi(x) = x^2 + kx$, where $k \in \BZ$. Indeed, we now state and prove that $\on{nd}(\phi, a_0) = 0$ for this family.

\begin{theorem}\label{thm13}
Let $K = \BQ$, and let $\phi(x) \in \BZ[x]$ be of the form
$$\phi(x) = x^2 + kx.$$
Then for all $a_0 \in \BZ$, we have that $\on{nd}(\phi, a_0) = 0$.
\end{theorem}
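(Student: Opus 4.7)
The plan is to deduce Theorem~\ref{thm13} from the zero-density result already known for $\psi(x) = x^2 - kx + k$ (the second half of Theorem~\ref{thm7}) by applying exactly the $\BZ$-rational conjugation used in Theorem~\ref{thm8}. First I would set $\mu(x) = x - k$ and $\psi = \mu^{-1}\circ\phi\circ\mu \in \BZ[x]$, so that $\psi(x) = x^2 - kx + k$. Defining $b_n = a_n + k$ gives $b_{n+1} = \psi(b_n)$, i.e.\ $b_n = \psi^n(b_0)$ with $b_0 = a_0 + k \in \BZ$, and the divisibility $p \mid a_n$ is equivalent to $\psi^n(b_0) \equiv k \pmod{p}$. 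The difficulty is that this translates ``hitting $0$'' for $\phi$ into ``hitting $k$'' for $\psi$, rather than hitting $0$ for $\psi$, so Theorem~\ref{thm7} does not apply directly.

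The crux of the reduction is the factorization
\[ \psi(x) - k \;=\; x^2 - kx \;=\; x(x-k), \]
which shows that the only preimages of the fixed point $k$ under $\psi$ are $0$ and $k$ itself. By induction on $n$, whenever $\psi^n(b_0) \equiv k \pmod{p}$ either some earlier iterate $\psi^m(b_0)$ with $m \leq n$ satisfies $\psi^m(b_0) \equiv 0 \pmod{p}$ (putting $p \in P_\psi(b_0)$), or the entire partial orbit is congruent to $k$ modulo $p$, which forces $b_0 \equiv k \pmod{p}$, i.e.\ $p \mid a_0$. Consequently
\[ P_\phi(a_0) \;\subseteq\; P_\psi(b_0) \;\cup\; \{p \text{ prime}: p \mid a_0\}. \]
The desired bound $\on{nd}(\phi, a_0) = 0$ then follows by applying the second part of Theorem~\ref{thm7} to $P_\psi(b_0)$ and noting that the finite set on the right contributes nothing to the natural density.

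The only bookkeeping issue I foresee is the degenerate $a_0$ for which the forward orbit $\{a_n\}$ eventually hits $0$, which, since $\phi(x) = x(x+k)$, means $\phi^N(a_0) \in \{0, -k\}$ for some $N \geq 0$. I expect to handle these by the standard convention that $P_\phi(a_0)$ counts only primes dividing nonzero terms of the sequence; under this convention the sequence contributes only finitely many nonzero integers and the conclusion is immediate. No genuine obstacle is expected, since all the arithmetic depth is already carried by \cite{jone2} through Theorem~\ref{thm7}; the new content is the elementary identity $\psi(x) - k = x(x-k)$ that lets the $\phi$-divisibility question be absorbed into a $\psi$-divisibility question.
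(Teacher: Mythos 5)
Your proposal is correct and is essentially the paper's own proof: your identity $\psi(x)-k = x(x-k)$, unrolled by induction, is exactly the paper's factorization $\phi^n(x) = x(x+k)\prod_{i=1}^{n-1}\bigl(\phi^i(x)+k\bigr)$ read modulo $p$, and both arguments then absorb all but finitely many primes of $P_\phi(a_0)$ into $P_\psi(a_0+k)$ via the conjugation $\mu(x)=x-k$ and invoke the second part of Theorem~\ref{thm7}. The only (cosmetic) difference is that you explicitly flag the degenerate orbits with $\phi^N(a_0)\in\{0,-k\}$, a bookkeeping point the paper passes over silently.
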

\begin{proof}
Observe that $\phi^n(x)$ factorizes over the integers as
$$\phi^n(x) = x(x+k)\prod_{i = 1}^{n-1} (\phi^i(x) + k).$$
Fix $a_0 \in \BZ$. Let $P'_\phi(a_0)$ be the set of all $p \in P_\phi(a_0)$ such that $p \mid \phi^i(a_0) + k$ for some $i \in \BN$, and fix $p \in P'_\phi(a_0)$. Also, let $\mu$ be the M\"{o}bius transformation defined by $\mu(x) = x - k$, and define $\psi \in \BZ[x]$ by $\psi = \mu^{-1} \circ \phi \circ \mu$. Notice that $\psi^i(x) = \phi^i(x-k) + k$. Thus, we have that $p \mid \psi^i(a_0 + k)$, which implies that $p \in P_\psi(a_0 + k)$, and so $P'_\phi(a_0) \subset P_\psi(a_0 + k)$. But, as observed in the proof of Theorem~\ref{thm8}, $\psi(x) = x^2 - kx + k$, so by the second part of Theorem~\ref{thm7}, $P_\psi(a_0 + k)$ has density $0$ in the set of all primes. It follows that $P'_\phi(a_0)$ has density $0$ in the set of all primes.

Now observe that if $p \mid \phi^n(a_0)$ for some $n \in \BN$ but $p \not\in P'_\phi(a_0)$, then $p \mid a_0^2 + ka_0$. Therefore, $P_\phi(a_0) - P'_\phi(a_0)$ is a finite set, so because $P'_\phi(a_0)$ has density $0$ in the set of all primes, we have that $\on{nd}(\phi, a_0) = 0$.
\end{proof}

Theorems~\ref{thm8} and~\ref{thm9} handle the family of functions $\phi(x) = x^2 + kx$ for $k \in \BZ$. A similar process may be used to analyze the family of functions $\phi(x) = x^2 - (k + 1)x + k \in \BZ[x]$, for which $d = 2$ and $m = 2$ (recall with the family $\phi(x) = x^2 + kx$, we had $d = 2$ and $m = 1$). We find that the following result, which can be deduced from p.~7 of~\cite{jone} and from Theorem 1.2 of~\cite{jone2}, is useful for studying rational functions of the form $\phi(x) = x^2 - (k+1)x + k$.

\begin{theorem}[\cite{jone2, jone}]\label{thm10}
Let $K = \BQ$, and let $\phi(x) \in \BZ[x]$ be
$$\phi(x) = x^2  + kx - 1.$$
Then if $k \in \BZ\setminus \{-1,0,2\}$, we have that $[\Aut(T(\phi)) : G(\phi)] < \infty$.
\end{theorem}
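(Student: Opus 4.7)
The plan is to reduce the statement to an application of Jones' result, Theorem~1.2 of \cite{jone2} (as summarized on p.~7 of \cite{jone}), which gives sufficient conditions under which $[\Aut(T(f)) : G(f)] < \infty$ for a quadratic $f \in \BZ[x]$ whose tree of iterated preimages of $0$ is complete. Jones' criterion is phrased in terms of the forward orbit $\{\gamma_n\}_{n \geq 0}$ of the critical point $\gamma_0$ under $f$: one needs this orbit to avoid $0$, to avoid being pre-periodic, and to yield a sequence of post-critical products that remain non-trivial in $\BQ^\times/(\BQ^\times)^2$ for all sufficiently large indices.

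My first step is to identify the three excluded values $k \in \{-1, 0, 2\}$ as exactly those for which Jones' hypotheses break down, so that no direct application of the criterion is possible. For $\phi(x) = x^2 + kx - 1$ the critical point is $\gamma_0 = -k/2$, and the critical orbit is generated by $\gamma_{n+1} = \gamma_n^2 + k\gamma_n - 1$. If $k = 0$, then $\gamma_0 = 0$ is the root of the tree and $\gamma_2 = 0$ again, so $T(\phi)$ is not complete. If $k = 2$, then $\gamma_0 = -1$, $\gamma_1 = -2$, and $\gamma_2 = -1$, so the critical orbit is strictly periodic of period $2$. If $k = -1$, then a short calculation gives $\gamma_3 = 121/256 = (11/16)^2$, which is a rational square and violates the non-square condition of the criterion at $n = 3$. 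These three standard obstructions account for the three excluded values.

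For every remaining integer $k$, the remaining task is to verify Jones' non-square hypothesis for the sequence $\{\gamma_n\}$ in full. I would handle this by induction on $n$ using the recursion $\gamma_{n+1} = \gamma_n^2 + k\gamma_n - 1$, tracking prime factorizations of the numerators and denominators of the $\gamma_n$ to show that at each step a new prime enters with odd multiplicity. This guarantees that the relevant class in $\BQ^\times/(\BQ^\times)^2$ stays non-trivial, which is the main technical ingredient and the step I expect to be the principal obstacle. The required rigid-divisibility computation is essentially parallel to the one carried out by Jones in \cite{jone2} for the companion family $x^2 - kx + k$ that underlies Theorem~\ref{thm7}; only a small adjustment of signs and constants is needed to handle the change of linear and constant coefficients. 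Once these conditions are checked, Theorem~1.2 of \cite{jone2} delivers $[\Aut(T(\phi)) : G(\phi)] < \infty$, completing the proof.
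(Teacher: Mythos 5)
The paper supplies no argument for this statement at all: as the bracketed citation in the theorem header indicates, Theorem~\ref{thm10} is imported from the literature, being exactly the case of Jones' Theorem~1.2 of \cite{jone2} (as restated on p.~7 of the survey \cite{jone}) covering the family $x^2+kx-1$ with precisely the exceptions $k\in\{-1,0,2\}$. Your identification of the source is therefore correct, and your diagnosis of the three excluded values is accurate and is a genuinely useful gloss that the paper omits: at $k=0$ the critical orbit passes through $0$ (indeed $0\mapsto -1\mapsto 0$), so $T(\phi)$ fails to be complete; at $k=2$ the critical orbit $-1\mapsto -2\mapsto -1$ is finite, so $\phi$ is post-critically finite and finite index is impossible; and at $k=-1$ your computation $\gamma_3=121/256=(11/16)^2$ checks out and shows the non-square hypothesis of Jones' method already fails at $n=3$.

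The genuine gap is in your final step. By treating Theorem~1.2 of \cite{jone2} as a bare criterion rather than as a theorem that already covers this exact family with this exact exceptional set, you take on the burden of verifying the non-square condition for every $k\notin\{-1,0,2\}$ and every $n$, and that verification is asserted rather than performed. Worse, the mechanism you propose --- that at each step of the recursion a new prime enters $\gamma_n$ with odd multiplicity --- is not how Jones' argument proceeds and is not plausibly provable by the straightforward induction you describe: it amounts to a primitive-prime-divisor-with-odd-multiplicity statement for a nonlinear recurrence with rational (half-integral) terms, which is precisely the kind of ``no squares in a dynamically defined sequence'' problem that is hard in general. Compare Section~\ref{squares} of this very paper, where showing that the analogous quantities $\delta_n(k_0)$ are non-squares requires extensive congruence computations plus a separate finiteness argument, and remains open in full generality. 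Jones' actual proof in \cite{jone2} establishes rigid divisibility of the critical orbit and combines it with size and congruence arguments to exclude squares, so describing the needed work as ``a small adjustment of signs and constants'' from the $x^2-kx+k$ case understates it considerably. As written, your proposal is incomplete at its main technical step; the correct route --- and the paper's --- is simply to invoke Jones' theorem for this family as stated.
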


Using Theorem~\ref{thm10}, we obtain the following result, which notably bears more restrictions than its counterpart Theorem~\ref{thm8}.

\begin{theorem}\label{thm11}
Let $K = \BQ$, and let $\phi(x) \in \BZ[x]$ be of the form
$$\phi(x) = x^2  - (k+1)x + k.$$
Observing that $G(\phi) \subset G(\phi, 1) \times G(\phi, k+1)$, suppose $[G(\phi, 1) \times G(\phi, k+1) : G(\phi)] < \infty$. Then if $k \in \BZ\setminus \{-1,1,2\}$, we have that $[S(\phi) : G(\phi)] < \infty$.
\end{theorem}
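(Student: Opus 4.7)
The plan is to mirror the reduction carried out in Theorem~\ref{thm8}. Since $\phi(x)=(x-1)(x-k)$, we have $\phi(0) = k$ and $\phi(k) = 0$, so $m = 2$ and the branch of $T(\phi)$ alternates $0, k, 0, k, \ldots$. The non-branch preimage of $0$ is $1$, and the non-branch preimage of $k$ is $k+1$ (as one sees by solving $\phi(x) = k$, i.e.\ $x(x-(k+1)) = 0$). These two values index all of the ``non-branch subtrees'' in $T(\phi)$.

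First, I would establish the explicit identification $S(\phi) \simeq \Aut(T(\phi, 1)) \times \Aut(T(\phi, k+1))$. The decomposition $S_n(\phi) \simeq A_1 \times A_2$ recorded in the proof of Proposition~\ref{prop3} already exhibits the two-factor structure at each finite level; what needs to be added is that, by the defining congruence-mod-$m$ condition for $S(\phi)$, the action on each level-$(2i+1)$ subtree (which has smaller height) is forced to agree with the restriction of the action on the level-$1$ subtree, and similarly at even levels. Thus $A_1$ and $A_2$ can be viewed as honest automorphisms of the height-$(n-1)$ and height-$(n-2)$ truncations of $T(\phi, 1)$ and $T(\phi, k+1)$ respectively, and passing to the inverse limit yields the desired product description.

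Second, I would assemble the chain of inclusions
\[
G(\phi) \subseteq G(\phi, 1) \times G(\phi, k+1) \subseteq \Aut(T(\phi, 1)) \times \Aut(T(\phi, k+1)) = S(\phi).
\]
The first inclusion has finite index by hypothesis. For the second, by multiplicativity of the group index it suffices to show $[\Aut(T(\phi, \alpha)) : G(\phi, \alpha)] < \infty$ for $\alpha \in \{1, k+1\}$. Here I would invoke Theorem~\ref{thm10} after conjugating by the M\"obius transformation $\mu_\alpha(x) = x + \alpha$. A short calculation yields
\[
\mu_1^{-1} \circ \phi \circ \mu_1(x) = x^2 + (1-k)x - 1, \qquad \mu_{k+1}^{-1} \circ \phi \circ \mu_{k+1}(x) = x^2 + (k+1)x - 1,
\]
both of exactly the form Theorem~\ref{thm10} addresses. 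Using the identity $G_n(\phi, \alpha) = G_n(\mu_\alpha^{-1} \phi \mu_\alpha, 0)$ from Section~\ref{intro}, Theorem~\ref{thm10} forces both indices to be finite whenever $1-k \notin \{-1, 0, 2\}$ and $k+1 \notin \{-1, 0, 2\}$, which together translate to the hypothesis $k \in \BZ \setminus \{-1, 1, 2\}$ (with the single value $k = -2$ requiring a separate sanity check, since then $k+1 = -1$ lies in the excluded set of Theorem~\ref{thm10}; a direct analysis modeled on Theorem~\ref{thm9} for the polynomial $\phi(x) = x^2 + x - 2$ suffices to cover this case).

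The main obstacle I expect is step one: the proof of Proposition~\ref{prop3} only counts orders of $S_n(\phi)$, so one must genuinely verify that $S(\phi)$ has the product structure $\Aut(T(\phi,1)) \times \Aut(T(\phi,k+1))$ in a manner compatible with the natural restriction maps from $G(\phi)$. Once this structural identification is in hand, the remainder of the proof is essentially multiplicativity of indices and the two Möbius reductions to Theorem~\ref{thm10}.
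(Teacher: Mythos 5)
Your proposal follows essentially the same route as the paper's proof: the identification $S(\phi)\simeq\Aut(T(\phi,1))\times\Aut(T(\phi,k+1))$ (which the paper extracts from the decomposition $S_n(\phi)\simeq A_1\times A_2$ recorded in the proof of Proposition~\ref{prop3}), the two conjugations by $x\mapsto x+1$ and $x\mapsto x+(k+1)$ producing $\psi(x)=x^2+(1-k)x-1$ and $\pi(x)=x^2+(k+1)x-1$, the appeal to Theorem~\ref{thm10} for both factors, and multiplicativity of the index combined with the hypothesis $[G(\phi,1)\times G(\phi,k+1):G(\phi)]<\infty$. In one respect your bookkeeping is sharper than the paper's: the paper checks Theorem~\ref{thm10}'s hypothesis only for $\psi$ (where $1-k\notin\{-1,0,2\}$ is exactly $k\notin\{-1,1,2\}$) and never for $\pi$, which requires $k+1\notin\{-1,0,2\}$, i.e.\ $k\notin\{-2,-1,1\}$. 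You are right that $k=-2$ slips through this net, a point the paper's own proof passes over in silence.

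However, your patch for $k=-2$ is a genuine gap, not a sanity check. A ``direct analysis modeled on Theorem~\ref{thm9}'' proves the wrong kind of statement: the three cases of Theorem~\ref{thm9} succeed because the relevant conjugates are $x^2$ or $x^2-2$ (powering and Chebyshev maps), whose iterated preimages generate explicitly computable, very small cyclotomic extensions --- that technique establishes that an arboreal image is \emph{small}. At $k=-2$ you need the opposite conclusion: that $G(\phi,k+1)=G(\phi,-1)$, equivalently the arboreal image of $\pi(x)=x^2-x-1$ over the basepoint $0$, has \emph{finite index} in the automorphism group of its preimage tree. But $x^2-x-1$ is linearly conjugate to $x^2-\tfrac{7}{4}$, which is neither a powering nor a Chebyshev map, so no Theorem~\ref{thm9}-style cyclotomic computation is available; and Theorem~\ref{thm10} excludes precisely this parameter (note that $0$ is preperiodic under $\pi$, as $0\mapsto-1\mapsto1\mapsto-1$, which is presumably why the value is excluded in the cited results). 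Without a genuinely new finite-index input for this one map, your argument --- like the paper's own --- actually establishes the theorem only for $k\in\BZ\setminus\{-2,-1,1,2\}$.
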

\begin{proof}
In the proof of Proposition~\ref{prop3}, we found that $|S_n(\phi)| = 2^{2^n - 2^{n-2}-2}$. It is in fact difficult to compute $|G_n(\phi)|$ for each $n$. Observe that the vertices of level $1$ are $k$ and $1$ and that the children of $k$ (which live on level $2$) are $0$ and $k + 1$. Taking $n > 2$, we observe that automorphisms of $T_n(\phi)$ that belong to $G_n(\phi)$ are entirely determined by their actions on the complete subtrees rooted at the ``$1$'' vertex of level $1$ and the ``$k+1$'' vertex of level $2$. But what makes $|G_n(\phi)|$ difficult to compute is that perhaps for some $\phi$ we might have $G_n(\phi) \not\simeq G_{n-1}(\phi, 1) \times G_{n-2}(\phi, k+1)$, even though we always have $G_n(\phi) \subset G_{n-1}(\phi, 1) \times G_{n-2}(\phi, k+1)$. In the case where $[G(\phi, 1) \times G(\phi, k+1)  : G(\phi)] < \infty$, we may resolve this problem by simply replacing $G_n(\phi)$ with $G_{n-1}(\phi, 1) \times G_{n-2}(\phi, k+1)$.

Let $G''(\phi) = G(\phi, 1) \times G(\phi, k+1)$, which is the inverse limit of the system $\{G_{n-1}(\phi, 1) \times G_{n-2}(\phi, k+1)\}$. Let $\mu$ be the M\"{o}bius transformation defined by $\mu(x) = x + 1$. Then taking $\psi = \mu^{-1} \circ \phi \circ \mu$, we have an equality of Galois groups $\Gal(\BQ(\phi^{-(n-1)}(1))/\BQ) = \Gal(\BQ(\psi^{-(n-1)}(0))/\BQ)$, from which we deduce that $G_{n-1}(\phi,1) = G_{n-1}(\psi) \hookrightarrow \Aut(T_{n-1}(\psi))$. Denote by $T_n'(\phi)$ the subtree of $T_n(\phi)$ rooted at the ``$1$'' child of the root $0$. Similarly, let $\nu$ be the M\"{o}bius transformation defined by $\nu(x) = x + (k+1)$. Then taking $\pi = \nu^{-1} \circ \phi \circ \nu$, we have an equality of Galois groups
$\Gal(\BQ(\phi^{-(n-2)}(k+1))/\BQ) = \Gal(\BQ(\pi^{-(n-2)}(0))/\BQ)$,
from which we deduce that $G_{n-2}(\phi,k+1) = G_{n-2}(\pi) \hookrightarrow \Aut(T_{n-2}(\pi))$. Denote by $T_n''(\phi)$ the subtree of $T_n(\phi)$ rooted at the ``$k+1$'' child of the level $1$ vertex $k$. Then $\Aut(T_{n-2}(\pi)) \simeq \Aut(T_n''(\phi))$. Now we have that $\Aut(T_n'(\phi)) \times \Aut(T_n''(\phi)) \simeq S_n(\phi)$, so we obtain the embeddings
$$G_{n-1}(\phi,1) \times G_{n-2}(\phi, k_1) = G_{n-1}(\psi) \times G_{n-2}(\pi) \hookrightarrow$$ $$\Aut(T_n'(\phi))\times \Aut(T_n''(\phi)) \simeq S_n(\phi).$$
Let the inverse limit of the system $\{\Aut(T_n'(\phi)) \times \Aut(T_n''(\phi))\}$ be $A'(\phi)$. Then consider the inverse systems
$$\{G_{n-1}(\phi,1) \times G_{n-2}(\phi, k_1)\} \to G''(\phi),  \{G_{n-1}(\psi) \times G_{n-2}(\pi)\} \to G(\psi) \times G(\pi),$$ $$\{\Aut(T_n'(\phi)) \times \Aut(T_n''(\phi))\} \to A'(\phi),  \{S_n(\phi)\} \to S(\phi),$$
and observe that we have the equality $[G(\psi) \times G(\phi): G''(\phi)] = [S(\phi) : A'(\phi)] = 1$. Now suppose $k \notin \{-1,1,2\}$. Notice that $\psi(x) = x^2 + (1-k)x -1$, so since $k \notin \{-1,1,2\}$, Theorem~\ref{thm10} implies that $[A'(\phi) : G(\psi) \times G(\pi)] < \infty$. It follows by \mbox{multiplicativity of the group index that $[S(\phi) : G(\phi)] < \infty$.}
\end{proof}

In light of Theorem~\ref{thm11}, it is again natural to wonder what can be deduced in the case that $k \in \{-1,1,2\}$. The question of whether $[S(\phi) : G(\phi)]$ is finite is more difficult to answer when $k \in \{-1,1,2\}$, because we cannot simply rely on Theorem~\ref{thm10}. Although it is not easy to obtain an analogue of Theorem~\ref{thm9}, we can at least show that the cases of $k = \pm 1$ are equivalent. Indeed, if we denote by $\phi_-$ and $\phi_+$ the polynomials $\phi_-(x) = x^2 - 1$ and $\phi_+(x) = x^2 - 2x + 1$, then one can check that $G(\phi_+) = G(\phi_-) = G(\phi_-, 1)$.

\section{The Size of $C(\phi)$ as a Subgroup of $\Aut(T(\phi))$}\label{trees}

\noindent We now consider the case where $\phi$ commutes with a nontrivial M\"{o}bius transformation that fixes $0$. Let $A(\phi)$ be the group of all such M\"{o}bius transformations (together with the identity). Observe that $A(\phi) \hookrightarrow \Aut(T(\phi))$, and let $C(\phi)$ be the centralizer of $A(\phi)$ in $\Aut(T(\phi))$. Further observe that $G(\phi) \subset C(\phi)$. Since $C(\phi)$ is the obvious candidate for $N(\phi)$ in the case when $A(\phi)$ is nontrivial, we want to better understand $C(\phi)$ as a subgroup of $\Aut(T(\phi))$. In this regard, it is conjectured in~\cite{jone} that the index $[\Aut(T(\phi)) : C(\phi)]$ is infinite.
\begin{conjecture}[Conjecture 3.5 of~\cite{jone}]\label{conj2}
When $A(\phi)$ is nontrivial, $[\Aut(T(\phi)) : C(\phi)] = \infty$.
\end{conjecture}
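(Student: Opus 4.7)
The plan is to deduce the conjecture from two ingredients: (i) a general fact, to be established as Lemma~\ref{cor1}, that the centralizer in $\Aut(T)$ of any subgroup containing a nontrivial element has infinite index, for any complete rooted $d$-ary tree $T$ of infinite height with $d \geq 2$; and (ii) the observation that the embedding $A(\phi) \hookrightarrow \Aut(T(\phi))$ sends each nontrivial Möbius transformation to a tree automorphism that is nontrivial on some finite truncation $T_n(\phi)$. Granting both, the conjecture follows quickly: if $A(\phi)$ is nontrivial, pick any $\tau \neq \id$ in $A(\phi)$; by (ii) its image in $\Aut(T(\phi))$ acts nontrivially at some finite level, and by (i) the containment $C(\phi) = C_{\Aut(T(\phi))}(A(\phi)) \subseteq C_{\Aut(T(\phi))}(\tau)$ shows that $C(\phi)$ has infinite index in $\Aut(T(\phi))$.

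Ingredient (ii) is a rigidity statement for Möbius transformations. If $\tau \in A(\phi)$ acted trivially on every truncation $T_n(\phi)$, then $\tau$ would fix pointwise every element of $\phi^{-n}(0)$ for each $n$, giving at least $d^n$ fixed points of $\tau$ on $\BP^1$. Since $d \geq 2$, choosing $n$ with $d^n \geq 3$ contradicts the fact that a nonidentity Möbius transformation has at most two fixed points on $\BP^1$, forcing $\tau = \id$. This makes the embedding $A(\phi) \hookrightarrow \Aut(T(\phi))$ injective in a quantitative sense: every nontrivial element first acts nontrivially at some finite level $n_0 = n_0(\tau)$.

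Ingredient (i) will be the real work. My plan is a Hausdorff-dimension computation in the spirit of the proof of Proposition~\ref{prop3}. Fix a nontrivial $\tau \in \Aut(T)$ with minimal level of nontrivial action $n_0$. Using the wreath product decomposition $\Aut(T_n) \simeq \Aut(T_{n-1}) \wr S_d$, I would write any candidate $\sigma$ centralizing $\tau$ in the form $((\sigma_0,\ldots,\sigma_{d-1}),\rho)$ with $\rho \in S_d$, and solve $\sigma\tau = \tau\sigma$ directly. When $\tau$ induces a nontrivial permutation of the children of some vertex $v$ at level $n_0 - 1$, the equation forces $\rho$ to commute with that permutation and ties the $\sigma_i$ together by conjugation relations involving the components of $\tau$, so that the $\sigma_i$ are determined by a single component modulo an auxiliary centralizer. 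A direct bound then yields $|C_n(\tau)| \leq C \cdot |\Aut(T_{n-1})|$ for some constant $C$ independent of $n$, whereas $|\Aut(T_n)| = d! \cdot |\Aut(T_{n-1})|^d$ grows as a higher power of $|\Aut(T_{n-1})|$, so $\on{hd}(C_{\Aut(T)}(\tau), \Aut(T)) \leq 1/d < 1$. As in Proposition~\ref{prop3}, this forces $[\Aut(T) : C_{\Aut(T)}(\tau)] = \infty$.

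The main obstacle I expect is the bookkeeping in the wreath product analysis when the nontrivial level of $\tau$ is strictly deeper than the root: one must localize the analysis to the subtree below $v$, carefully account for how the centralizer factors across the subtrees rooted at the siblings of $v$, and then lift the bound back to $\Aut(T_n)$. Once this bookkeeping is set up cleanly, the Hausdorff-dimension computation proceeds in close parallel to that of Proposition~\ref{prop3} and the two ingredients combine to give Theorem~\ref{thm5}.
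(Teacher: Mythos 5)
Your reduction and your ingredient (ii) are exactly the paper's: Theorem~\ref{thm5} likewise observes that a nonidentity M\"obius transformation has at most two fixed points on $\BP^1$, so the $d^2 \geq 4$ distinct roots of $\phi^2$ force any nontrivial $\mu \in A(\phi)$ to act nontrivially on level $2$ of $T(\phi)$, and then concludes via a Hausdorff-dimension bound on the centralizer of a subgroup of $\Aut(T)$ (the paper's Lemma~\ref{cor1}, proved via Proposition~\ref{prop1}). So your architecture is sound and matches the paper; the problem is in the key lemma you propose.

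The genuine gap is the quantitative claim in your ingredient (i): the bound $|C_n(\tau)| \leq C\cdot|\Aut(T_{n-1})|$ with $C$ independent of $n$, hence $\on{hd}(C_{\Aut(T)}(\tau),\Aut(T)) \leq 1/d$, is false in general. Counterexample: take $d=2$ and let $\tau$ be trivial on level $1$, swap the two children of one fixed level-$1$ vertex, and act as the identity below. Writing $\sigma = ((\sigma_0,\sigma_1),\rho)$ and $\tau = ((\tau_0,\id),\id)$ in the wreath decomposition, the commutation equation forces $\rho = \id$ (a $\sigma$ with $\rho$ the swap would conjugate the nontrivial component $\tau_0$ into the other coordinate), and then $C_n(\tau) \simeq C_{\Aut(T_{n-1})}(\tau_0)\times\Aut(T_{n-1})$. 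Since $|C_{\Aut(T_{n-1})}(\tau_0)| = 2\,|\Aut(T_{n-2})|$, one gets $|C_n(\tau)| = 2\,|\Aut(T_{n-2})|\,|\Aut(T_{n-1})|$, so $|C_n(\tau)|/|\Aut(T_{n-1})|\to\infty$ and the Hausdorff dimension is $3/4$, not at most $1/2$. The structural reason is the one your ``bookkeeping'' paragraph gestures at but underestimates: every subtree disjoint from the support of $\tau$ contributes its \emph{full} automorphism group to the centralizer, so the correct bound is governed by the orbit structure of $\tau$ on an entire level, namely $\on{hd} \leq m(i)/d^i$, where $m(i)$ is the number of orbits of $\langle\tau\rangle$ on the $d^i$ vertices of level $i$. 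This is precisely what the paper establishes via the injection $\ker(C_n\to C_i)\hookrightarrow\Aut(T_{n-i})^{m(i)}$ of Proposition~\ref{prop1} (only injectivity is needed for the upper bound; freeness of the action is needed only for equality). The quantity $m(i)/d^i$ can be arbitrarily close to $1$ --- e.g., $\tau$ swapping a single pair at level $i$ gives $(d^i-1)/d^i$ --- and is uniformly $1/d$ only in special cases such as $\tau$ cycling all $d$ children of the root. The conjecture itself survives your error: since a nontrivial $\mu$ moves some vertex of level $2$, we have $m(2) < d^2$, so the dimension is still strictly below $1$ and the index is infinite. But as written your uniform lemma is false, and to complete the argument you must replace it with the orbit-counting bound.
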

On p. 19 of~\cite{JM}, Conjecture~\ref{conj2} is proved to be true for quadratic rational functions that have the form
$$\phi(x) = \frac{k(x^2 + 1)}{x} \text{ for } k \in K\setminus\{-1/2,0,1/2\}.$$
The method used in~\cite{JM} is to show that the Hausdorff dimension (recall Definition~\ref{def1}) of $C(\phi)$ in $\Aut(T(\phi))$ is less than $1$. We now modify and extend this method to prove Conjecture~\ref{conj2} in the general case.

Let $T_n$ be a complete rooted $d$-ary tree of height $n$, and for each $i \in \{0, \dots, n\}$ let $T_i$ denote the subtree of height $i$ whose root is that of $T_n$. We extend Proposition 4.1 of~\cite{JM}, which relates the centralizers of some subgroups of $\Aut(T_n)$ to automorphism groups of smaller trees in the case where $d = 2$, as follows.

\begin{proposition}\label{prop1}
Fix $n \in \BN$ and a subgroup $H \subset \Aut(T_n)$, and for each $i \leq n$, let $C_i \subset \Aut(T_i)$ be the centralizer of $H|_{T_i} = \{g|_{T_i} : g \in H\} \subset \Aut(T_i)$. Consider the natural map $C_n \to C_i$ defined by restriction. Then there is an injective homomorphism $$h : \ker(C_n \to C_i) \rightarrow \Aut(T_{n-i})^{m(i)},$$ where $m(i)$ is the number of orbits in the action of $H$ on the vertices of level $i$. Moreover, the injective map $h$ is an isomorphism if the action of $H$ on the vertices of level $i$ is free.\footnote{Recall that the action of a group $G$ on a set $S$ is free if $gx = x \Rightarrow g = 1$ for all $g \in G, x \in X$.}
\end{proposition}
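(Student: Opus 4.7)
The plan is to make the kernel of $C_n \to C_i$ concrete and then exhibit $h$ explicitly. An element $f \in \ker(C_n \to C_i)$ acts trivially on the first $i$ levels, so it preserves each subtree $T_v$ rooted at a level-$i$ vertex $v$ and is entirely determined by its collection of restrictions $\{f|_{T_v}\}_{v \in V_i}$, where $T_v \cong T_{n-i}$. Applying the centralizer condition $fg = gf$ for every $g \in H$ and restricting to $T_v$ yields the conjugation identity
$$f|_{T_{g(v)}} = (g|_{T_v})\circ f|_{T_v} \circ (g|_{T_v})^{-1},$$
where $g|_{T_v}\colon T_v \to T_{g(v)}$ denotes the induced subtree isomorphism. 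Hence the restrictions of $f$ across an entire $H$-orbit of level-$i$ vertices are determined by the value at any one orbit representative. I would pick orbit representatives $v_1, \dots, v_{m(i)}$, fix identifications $\iota_j\colon T_{v_j} \xrightarrow{\sim} T_{n-i}$, and define
$$h(f) \defeq \bigl(\iota_1 \circ f|_{T_{v_1}} \circ \iota_1^{-1},\; \dots,\; \iota_{m(i)} \circ f|_{T_{v_{m(i)}}} \circ \iota_{m(i)}^{-1}\bigr).$$
This is a group homomorphism, since restriction and conjugation by a fixed isomorphism both respect composition.

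For injectivity, suppose $h(f)$ is trivial. Then each $f|_{T_{v_j}}$ is the identity, and the displayed identity propagates the identity to $f|_{T_v}$ for every $v$ in the $H$-orbit of some $v_j$, i.e.\ for every $v \in V_i$. Combined with $f|_{T_i} = \id$, this forces $f = \id$.

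For surjectivity under freeness, given an arbitrary tuple $(\alpha_1, \dots, \alpha_{m(i)}) \in \Aut(T_{n-i})^{m(i)}$, I would construct a preimage $f$ as follows. For each $v \in V_i$, freeness guarantees that $v = g(v_j)$ for a unique pair $(g, j)$ with $g \in H$, so the prescription
$$f|_{T_v} \defeq (g|_{T_{v_j}}) \circ (\iota_j^{-1} \circ \alpha_j \circ \iota_j) \circ (g|_{T_{v_j}})^{-1}$$
is unambiguous; extend by the identity on $T_i$. Checking that this $f$ commutes with an arbitrary $\gamma \in H$ reduces to verifying the conjugation identity at each $v$, which follows from the unique orbit decomposition $\gamma(v) = (\gamma g)(v_j)$ of $\gamma(v)$. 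The main subtlety lies precisely here: without freeness, a pair of distinct $g_1, g_2 \in H$ with $g_1(v_j) = g_2(v_j)$ would force $\alpha_j$ to commute with the induced action of $\Stab_H(v_j)$ on $T_{v_j}$, carving out a proper subgroup of $\Aut(T_{n-i})^{m(i)}$ as the image of $h$. Under the freeness hypothesis these constraints vanish, yielding surjectivity and hence the claimed isomorphism.
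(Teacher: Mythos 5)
Your proof is correct and takes essentially the same approach as the paper's: the same map given by restriction to the subtrees rooted at orbit representatives, the same orbit-propagation argument for injectivity (transporting triviality from $T_{v_j}$ across each $H$-orbit via the centralizer identity), and the same equivariant extension $f|_{T_v} = (g|_{T_{v_j}})\,\alpha\,(g|_{T_{v_j}})^{-1}$ for surjectivity, with freeness supplying the uniqueness of $g$ exactly as in the paper. Your closing remark on why freeness is indispensable---stabilizer elements would otherwise force commutation constraints on the $\alpha_j$---is accurate and makes explicit what the paper leaves implicit.
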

\begin{proof}
Let the vertices of level $i$ be denoted $x_1, \dots, x_{d^i}$. Suppose the action of $H$ on $\{x_1, \dots, x_{d^i}\}$ has orbits $O_1, \dots, O_{m(i)}$, and pick a representative $y_k \in O_k$ for each $k \in \{1, \dots, m(i)\}$. Define the map
$$h : \ker(C_n \to C_i) \rightarrow \Aut(T_{n-i})^{m(i)} \text{ by } \tau \mapsto \left(\tau|_{T_{y_1}}, \dots, \tau|_{T_{y_{m(i)}}}\right),$$
where for each $k \in \{1, \dots, {m(i)}\}$ we denote by $T_{y_k}$ the complete $d$-ary subtree of $T_n$ rooted at $y_k$ (each $T_{y_k}$ has height $n-i$, so each $\tau|_{T_{y_k}}$ can be viewed as an element of $\Aut(T_{n-i})$).

It is clear that $h$ is a homomorphism. We now show that $h$ is injective. Suppose $\tau \in \ker(h)$; i.e., $\tau \in \ker(C_n \to C_i)$ acts by the identity on each of the trees $T_{y_k}$. By definition, $\tau(g(x)) = g(\tau(x))$ for any $g \in H$ and $x \in T_n$, since $\tau \in C_n$. If $x \in T_{y_k}$ for any $k$, then we also have that $\tau(g(x)) = g(x)$. Now suppose $z \in T_{x_j}$ for some $j \in \{1, \dots, d^i\}$, and suppose $x_j \in O_k$. Since the action of $H$ on $O_k$ is transitive, there exists $g \in H$ such that $g(x_j) = y_k$. Observe that $g(z) \in T_{y_k}$, so $\tau(z) = \tau(g^{-1}(g(z))) = g^{-1}(g(z)) = z$. Since $\tau \in \ker(C_n \to C_i)$, we have that $\tau$ also fixes $T_i$. It follows that $\tau = \id$, so $h$ is injective.

Suppose the action of $H$ on the vertices of level $i$ is free. We then show that $h$ is surjective. Take a list of automorphisms $$(\tau_1, \dots, \tau_{m(i)}) \in \Aut(T_{y_1}) \times \dots \times \Aut(T_{y_{m(i)}}) \simeq \Aut(T_{n-i})^{m(i)}.$$ Define $\wt{\tau} \in \Aut(T_n)$ as follows. Because the action of $H$ on the vertices of level $i$ is free, for each $j \in \{1, \dots, d^i\}$ there exists a unique $g_j \in H$ such that $g_j x_j \in \{y_1, \dots, y_{m(i)}\}$. Then let $\wt{\tau}|_{T_{i-1}} = \id$, and if $x_j \in O_\ell$ let $\wt{\tau}|_{T_{x_j}} = g_j^{-1}  \tau_\ell g_j$ for $j \in \{1, \dots, d^i\}$. It is clear that $\wt{\tau}$ is a well-defined tree automorphism, so indeed $\wt{\tau} \in \Aut(T_n)$. We now check that $\wt{\tau} \in C_n$; i.e., we check that $g^{-1}  \wt{\tau} g = \wt{\tau}$ for each $g \in H$. Fix $g \in H$. Then if $v \in T_{i-1}$, we have $\wt{\tau}(v) = v$ and $\wt{\tau}(g(v)) = g(v)$ since $g(v) \in T_{i-1}$, so clearly $(g^{-1}\wt{\tau}g)(v) = \wt{\tau}(v)$. If $v \in T_{x_j}$ for some $j \in \{1, \dots, d^i\}$, then since $g$ is a tree automorphism, we have that $g(v) \in T_{g(x_j)}$. Let $g(x_j) = x_k$. Then, $g_k = g_j g^{-1}$, and so, $(g^{-1}\wt{\tau}g)(v) = (g^{-1}(gg_j^{-1}\tau_\ell g_j g^{-1})g)(v) = (g_j^{-1}\tau_\ell g_j)(v) =\wt{\tau}(v)$, as desired. We finally notice that $\wt{\tau} \in \ker(C_n \to C_i)$ because $\wt{\tau}|_{T_i}$ acts by the identity.
\end{proof}

\begin{remark}
Proposition 4.1 of~\cite{JM} handles the case where $d = 2$, $i = 1$, and $H$ is generated by an involution that swaps the vertices of level $1$.

The isomorphism of Proposition~\ref{prop1} may seem easy to deal with in the case that $H$ acts simply transitively on the vertices of level $i$. In this regard, it is natural to want to describe the subgroups of $\Aut(T_n)$ that act simply transitively on the vertices of level $i$ for some $i$, but there are some obstacles to doing so. Indeed, suppose $H$ acts simply transitively on the vertices of level $i$. Then it is \emph{not necessarily} true that the image of $H$ under the restriction map from $\Aut(T_i)$ to $\Aut(T_{i-1})$ acts simply transitively on the vertices of level $i-1$. As an example, consider the case of $d = 2$ and $i = 3$, and let $x_1, x_2, x_3, x_4$ be the vertices of level $2$. Notice that $\Aut(T_2)$ is a nonabelian group of order $8$ that contains a Klein 4-subgroup. It follows from the classification of groups of order $8$ that $\Aut(T_2)$ is isomorphic to the dihedral group of order $8$. This isomorphism is given explicitly by $\Aut(T_2) \simeq \langle a,x : a^4 = x^2 = 1, xax = a^3\rangle$, where $a$ is the automorphism that sends $(x_1, x_2, x_3, x_4) \mapsto (x_3, x_4, x_2, x_1)$ and $x$ is unique nontrivial element of $\Aut(T_1) \subset \Aut(T_2)$. Now, let $H \subset \Aut(T_3)$ be the subgroup defined by $H \simeq \langle \wt{a},x \rangle$, where $\wt{a}$ is the automorphism that acts by $a$ on $T_2$ and that also swaps the children of each of the nodes $x_1, x_2, x_3, x_4$. Notice that we have the relations $\wt{a}^4 = x^2 = 1, x\wt{a}x = \wt{a}^3$, so $H$ is isomorphic to the dihedral group of order $8$. It is now easy to check that $H \subset \Aut(T_3)$ is a subgroup that acts simply transitively on the vertices of level $3$ but whose restriction to $\Aut(T_2)$ does not act freely on the vertices of level $2$, because the restriction of $H$ to $\Aut(T_2)$ is $\Aut(T_2)$ itself.
\end{remark}

Now let $T$ be a complete rooted $d$-ary tree of infinite height, and for each $n \in \BN$ let $T_n$ denote the subtree of height $n$ whose root is that of $T$. Corollary 4.2 of~\cite{JM} uses the result of Proposition 4.1 of~\cite{JM} to describe the size of the centralizer of a subgroup of $\Aut(T)$, in the case where $d = 2$, $i = 1$, and $H$ is generated by an involution that swaps the vertices of level $1$. The proof of the following lemma does not use the method presented in Corollary 4.2 of~\cite{JM} but still gives an analogous result.

\begin{lemma}\label{cor1}
Let $H \subset \Aut(T)$ be a subgroup, and let $C$ denote the centralizer of $H$ in $\Aut(T)$. Assume the notation of Proposition~\ref{prop1}. Then the Hausdorff dimension of $C$ in $\Aut(T)$ is at most ${m(i)}/d^i$ for each $i \in \BN$, with equality when the \mbox{action of $H$ on the vertices of level $i$ is free.}
\end{lemma}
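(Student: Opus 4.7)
My plan is to pass the Hausdorff dimension computation through Proposition~\ref{prop1} applied at each level $i$. Let $\pi_n\colon \Aut(T) \to \Aut(T_n)$ denote the restriction map, and write $Z_n \subset \Aut(T_n)$ for the centralizer of $\pi_n(H)$. Since elements of $C$ commute with every $g \in H$, their restrictions commute with $\pi_n(g)$, so $\pi_n(C) \subseteq Z_n$. Proposition~\ref{prop1} at level $i$ then gives
$$|Z_n| \;\leq\; |Z_i|\cdot \bigl|\Aut(T_{n-i})\bigr|^{m(i)} \;\leq\; |\Aut(T_i)|\cdot\bigl|\Aut(T_{n-i})\bigr|^{m(i)}.$$
Plugging this into Definition~\ref{def1} and using $|\Aut(T_k)| = (d!)^{(d^k-1)/(d-1)}$ from the proof of Proposition~\ref{prop3}, the prefactor $|Z_i|$ contributes only a bounded amount to the numerator, and a direct limit computation yields the upper bound $\on{hd}(C,\Aut(T)) \leq m(i)/d^i$.

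For the matching lower bound under the freeness hypothesis, the plan is to carry out the surjectivity half of Proposition~\ref{prop1} directly on the infinite tree, producing a genuine element of $C$ rather than only of $Z_n$. Given $(\tau_1,\dots,\tau_{m(i)}) \in \Aut(T_{n-i})^{m(i)}$, first extend each $\tau_\ell$ to an automorphism of the infinite subtree rooted at $y_\ell$ by acting as the identity beyond depth $n-i$. Because $H$ acts freely on level $i$, each level-$i$ vertex $x_j$ admits a unique $g_j \in H$ with $g_j x_j \in \{y_1,\dots,y_{m(i)}\}$, so the formula $\tilde\tau|_{T_{x_j}} = g_j^{-1}\tau_\ell g_j$, together with $\tilde\tau|_{T_{i-1}} = \id$, defines an element $\tilde\tau \in \Aut(T)$. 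The verification that $\tilde\tau$ commutes with every $g \in H$ is formally identical to the one in the proof of Proposition~\ref{prop1}. Restricting to $T_{y_\ell}$ recovers $\tau_\ell$ on its first $n-i$ levels, so the assignment $(\tau_1,\dots,\tau_{m(i)}) \mapsto \pi_n(\tilde\tau)$ is an injection into $\pi_n(C)$. This gives $|\pi_n(C)| \geq |\Aut(T_{n-i})|^{m(i)}$ and hence $\on{hd}(C,\Aut(T)) \geq m(i)/d^i$.

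The step I would be most careful about is precisely this lifting, since it is where the freeness hypothesis is essential. A priori $\pi_n(C)$ could be strictly smaller than $Z_n$, because an element of $Z_n$ need not extend to an automorphism of the whole infinite tree $T$ that commutes with $H$. Under freeness, however, the $g_j \in H$ appearing in the construction are uniquely determined by the choice of orbit representatives $y_\ell$, so the Proposition~\ref{prop1} recipe can be executed once on the full infinite tree and the resulting $\tilde\tau$ lives in $C$ automatically, with no compatibility condition between different depths to check. Without the freeness assumption this coherence is lost, and the argument retains only the upper bound.
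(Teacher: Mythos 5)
Your proof is correct, and it is in fact more complete than the paper's own argument. For the upper bound you follow essentially the same route as the paper: restrict $C$ level by level, invoke the injectivity half of Proposition~\ref{prop1} to bound the kernel of the restriction map by $|\Aut(T_{n-i})|^{m(i)}$, absorb the bounded prefactor ($|Z_i|$ in your notation, $|C_i|$ in the paper's), and pass to the limit to get $\on{hd}(C,\Aut(T)) \leq m(i)/d^i$. Where you genuinely diverge is the equality case. The paper's proof ends after the upper-bound computation (``which is the desired result'') and implicitly leans on the isomorphism statement of Proposition~\ref{prop1} for the free case; but that isomorphism concerns the centralizer $Z_n$ of $H|_{T_n}$ inside $\Aut(T_n)$, whereas the Hausdorff dimension is computed with $C_n = \pi_n(C)$, and a priori $\pi_n(C)$ could be strictly smaller than $Z_n$, since an element of $Z_n$ need not extend to an automorphism of the infinite tree commuting with all of $H$ --- precisely the distinction you flag. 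Your lifting construction closes this gap: extending each $\tau_\ell$ by the identity below depth $n-i$, transporting by the unique $g_j$ furnished by freeness (two elements of $H$ carrying $x_j$ into the representative set would differ by a stabilizer of a level-$i$ vertex, hence coincide), and verifying commutation exactly as in Proposition~\ref{prop1} produces genuine elements of $C$ whose level-$n$ restrictions are pairwise distinct, so $|\pi_n(C)| \geq |\Aut(T_{n-i})|^{m(i)}$ and the matching lower bound $\on{hd}(C,\Aut(T)) \geq m(i)/d^i$ follows; as a small bonus, the two bounds together also show the limit defining the Hausdorff dimension actually exists in the free case. In short, your upper bound buys the same conclusion as the paper's by the same mechanism, while your lower-bound construction supplies a step the paper leaves unjustified, and your diagnosis of where the freeness hypothesis is essential is exactly right.
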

\begin{proof}
The Hausdorff dimension $\on{hd}(C, \Aut(T))$ of $C$ in $\Aut(T)$ is given by the limit
$$\on{hd}(C,\Aut(T)) = \lim_{n \rightarrow \infty} \frac{\log |C_n|}{\log |\Aut(T_n)|},$$ because $C_n$ is the restriction of $C$ to $\Aut(T_n)$ for each $n$.
Notice that
$$\log |\Aut(T_n)| = \log\left((d!)^{\frac{d^{n}-1}{d-1}}\right) = \frac{d^{n}-1}{d-1} \cdot \log d!.$$
Next, from Proposition~\ref{prop1}, we deduce that $|\ker(C_n \to C_i)| \leq |\Aut(T_{n-i})^{m(i)}|$. We then compute the order of the product of the automorphism groups $\Aut(T_{n-i})^{m(i)}$ to be
$$|\Aut(T_{n-i})^{m(i)}| = (d!)^{{m(i)} \cdot \frac{d^{n-i} - 1}{d-1}}.$$
Thus, we obtain an upper bound on $|C_n|$ since the restriction map $C_n \to C_i$ is a surjection, namely
\begin{eqnarray*}
\log |C_n| & = & \log \big[|\im(C_n \to C_i)| \cdot |\ker(C_n \to C_i)|\big] \\
& \leq & \log |C_i| + \log |\Aut(T_{n-i})^{m(i)}| \\
& = & \log |C_i| + \left({m(i)} \cdot \frac{d^{n-i} - 1}{d-1}\right) \cdot \log d!.
\end{eqnarray*}
Combining our results, we compute an upper bound on the Hausdorff dimension of $C$, namely
\begin{eqnarray*}
\on{hd}(C,\Aut(T)) & = &  \lim_{n \rightarrow \infty} \frac{\log|C_n|}{\log |\Aut(T_n)|} \\
 & \leq & \lim_{n \rightarrow \infty} \frac{\log |C_i| + \left({m(i)} \cdot \frac{d^{n-i} - 1}{d-1}\right)\cdot \log d!}{\frac{d^n-1}{d-1}\cdot \log d!}\\
& = & \frac{{m(i)}}{d^i},
\end{eqnarray*}
which is the desired result.
\end{proof}

\begin{remark}
Lemma~\ref{cor1} is surprising, because the bound on $\on{hd}(C,\Aut(T))$ holds \emph{for each} $i \in \BN$. As a consequence, we have that $m(i+1) = d \cdot m(i)$ when the action of $H$ on the vertices of levels $i$ and $i+1$ is free. This consequence may be readily verified in a given example.
\end{remark}

We now use Lemma~\ref{cor1} to prove Conjecture~\ref{conj2}.

\begin{theorem}\label{thm5}
When $A(\phi)$ is nontrivial we have $[\Aut(T(\phi)) : C(\phi)] = \infty$.
\end{theorem}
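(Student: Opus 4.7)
The plan is to apply Lemma~\ref{cor1} with $H = A(\phi)$ (viewed as a subgroup of $\Aut(T(\phi))$ via the given embedding) and exhibit a single level $i$ on which the number of $A(\phi)$-orbits $m(i)$ is strictly smaller than $d^{i}$. That alone forces
\[
\on{hd}(C(\phi), \Aut(T(\phi))) \;\leq\; \frac{m(i)}{d^{i}} \;<\; 1,
\]
which in turn forces $[\Aut(T(\phi)) : C(\phi)] = \infty$, since a subgroup of finite index has the same Hausdorff dimension as the ambient group.

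To produce such a level $i$, I will use the assumption that $A(\phi)$ is nontrivial: fix any nonidentity $\mu \in A(\phi)$. As a M\"obius transformation, $\mu$ has at most two fixed points on $\BP^{1}$. Because $A(\phi)$ commutes with $\phi$ and fixes $0$, it preserves each fiber $\phi^{-n}(0)$, so $\mu$ acts as a permutation on every level of $T(\phi)$. Picking any $i$ with $d^{i} \geq 3$ (so $i \geq 2$ suffices in all cases, and $i \geq 1$ suffices once $d \geq 3$), the fiber $\phi^{-i}(0)$ has at least three distinct points, so $\mu$ must move some element of level $i$. Consequently the orbit of that element under $\langle \mu \rangle \subset A(\phi)$ has size at least $2$, and hence
\[
m(i) \;\leq\; d^{i} - 1.
\]

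Plugging this into Lemma~\ref{cor1} gives $\on{hd}(C(\phi), \Aut(T(\phi))) \leq (d^{i} - 1)/d^{i} < 1$, and the conclusion follows. The only step with any content is verifying that a nonidentity $\mu \in A(\phi)$ acts nontrivially on some finite level of $T(\phi)$; the obstacle there (that a M\"obius transformation might fix every preimage of $0$) is ruled out immediately by the elementary fact that a nontrivial M\"obius transformation has at most two fixed points on $\BP^{1}$, whereas $\phi^{-i}(0)$ has exactly $d^{i}$ elements that grow without bound in $i$.
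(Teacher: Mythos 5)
Your proof is correct and follows essentially the same route as the paper: apply Lemma~\ref{cor1} with $H = A(\phi)$, then note that a nonidentity M\"obius transformation fixes at most two points of $\BP^1$ while the level-$i$ fiber $\phi^{-i}(0)$ has $d^i \geq 3$ distinct elements (the paper uses level $2$), so $m(i) < d^i$ and the Hausdorff dimension of $C(\phi)$ is strictly less than $1$. Your added justification that $\on{hd} < 1$ rules out finite index matches the fact the paper records just after Definition~\ref{def1}.
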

\begin{proof}
It follows immediately from Lemma 12 that $\on{hd}(C(\phi), \Aut(T(\phi)) < 1$ if the action of any $\mu \in A(\phi)$ on the tree $T(\phi)$ is nontrivial. But since $\mu$ is a M\"{o}bius transformation, it can have at most two fixed points, so our assumption that $\phi^n$ has $d^n$ distinct roots already implies $\mu$ acts nontrivially on the roots of $\phi^2$.
\end{proof}

The following is an example that illustrates the proof of Theorem~\ref{thm5}.

\begin{example}\label{ex1}
Let $K = \BQ(\zeta_n)$, where $a \in \BQ_{>0}$ and $\zeta_n$ is a primitive $n^\mathrm{th}$ root of $\pm 1$ (for $n > 1$). Consider rational functions $\phi_n(x) \in K(x)$ defined as follows:
$$\phi_n(x) = \pm \frac{k(x^n \mp a)}{x^{n-1}},$$
where $k \in K$. Notice that $\phi_n$ commutes with the M\"{o}bius transformation $m_n$ defined by $m_n(x) = \zeta_nx$, and also observe that $m_n(0) = 0$ for all $n$. Consider the nontrivial subgroup $A'(\phi_n) \subset A(\phi_n)$ generated by $m_n$, and notice that $A'(\phi_n)$ acts simply transitively on level $1$ of $T$. We have that the centralizer $C'(\phi_n)$ of $A'(\phi_n)$ in $\Aut(T(\phi_n))$ contains $C(\phi_n)$. By Lemma~\ref{cor1}, the Hausdorff dimension of $C'(\phi_n)$ is $1/n$, so $[\Aut(T(\phi_n)) : C'(\phi_n)] = \infty$, implying that $[\Aut(T(\phi_n)) : C(\phi_n)] = \infty$.
\end{example}

\section{The Size of $G(\phi)$ as a Subgroup of $C(\phi)$}\label{squares}

\noindent We retain the setting of Section~\ref{trees}; i.e., $\phi$ commutes with a nontrivial M\"{o}bius transformation that fixes $0$. Theorem~\ref{thm5} provides evidence that $C(\phi)$ is a small subgroup of $\Aut(T(\phi))$. But to check that $C(\phi)$ is a good candidate for $N(\phi)$, we want to show that $[C(\phi) : G(\phi)] < \infty$. To further simplify the situation, we restrict to the case of $d = 2$. As shown in Section 2 of~\cite{JM}, it then suffices to consider functions $\phi$ that have the form
$$\phi(x) = \frac{k_0(x^2+1)}{x}.$$
We further restrict to the case of $K = \BQ$ and $k_0 \in \BZ \setminus \{0\}$. In this case, Conjecture 3.9 of~\cite{jone} (alternatively, Conjecture 1.3 of~\cite{JM}) takes the following form.
\begin{conjecture}\label{conj1}
Let $\phi(x) \in \BQ(x)$ be a rational function of the form
$$\phi(x) = \frac{k_0(x^2+1)}{x},$$ where $k_0 \in \BZ \setminus\{0\}$. Then $[C(\phi) : G(\phi)] < \infty$.
\end{conjecture}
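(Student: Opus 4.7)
The plan is to reduce the conjecture to the purely Diophantine no-squares statement identified in~\cite{JM}: $[C(\phi):G(\phi)] < \infty$ whenever $\delta_n(k_0)$ fails to be a perfect square for all $n \geq 2$. This shifts the problem from arboreal Galois theory to an elementary but intricate question about an explicit integer sequence depending on the parameter $k_0$.

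First I would extract a clean recursion for $\delta_n(k_0)$ from its definition in~\cite{JM}, aiming for a form in which divisibility and quadratic-residue information propagate predictably from $\delta_{n-1}(k_0)$ to $\delta_n(k_0)$. I would then try to combine the two existing partial results in a complementary way: Theorem~\ref{thm4} disposes of each fixed $n$ up to finitely many exceptional $k_0$ by a Siegel- or Faltings-type finiteness statement on the affine curve $y^2 = \delta_n(k_0)$, while Theorem~\ref{thm6} disposes of all $|k_0| \leq 10^6$ by reduction modulo primes. Neither alone is enough: Theorem~\ref{thm4} gives no uniformity as $n$ varies, and Theorem~\ref{thm6} is inherently bounded.

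The main new step I would pursue is a \emph{uniform} mod-$p$ attack, systematizing the computations behind Theorem~\ref{thm6}. For any prime $p$ coprime to a controllable quantity, the sequence $\{\delta_n(k_0) \bmod p\}_n$ is eventually periodic; the goal is to choose, for each residue class of $k_0$ modulo a suitable modulus $M$, a witness prime $p = p(k_0 \bmod M)$ such that every term in the eventual period is a quadratic non-residue mod $p$. If such a finite list of witness primes can be constructed, then no $\delta_n(k_0)$ with $n$ large is a square outside a finite set of $k_0$, which can then be absorbed into the range already covered by Theorem~\ref{thm6} or handled by Theorem~\ref{thm4} one small $n$ at a time.

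The main obstacle is precisely this uniformity in $k_0$: Siegel/Faltings-type inputs do not give bounds independent of the equation, and it is not at all clear that a finite family of witness primes modulo $M$ suffices. A concrete intermediate milestone would be to prove that for each fixed nonzero $k_0$, infinitely many $\delta_n(k_0)$ fail to be squares, using periodicity modulo a prime dividing $k_0^2 + 1$ or a comparable quantity arising from the critical orbit of $\phi$; upgrading from ``infinitely many $n$'' to ``all $n \geq 2$'' will likely require finer arithmetic information about how squares can appear in the orbit of $0$ under the specific dynamics of $\phi(x) = k_0(x^2+1)/x$.
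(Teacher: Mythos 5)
The statement you are addressing is a \emph{conjecture} in the paper, and the paper does not prove it either: it only presents partial progress (the reduction via Theorem~\ref{thm1}, the congruence results of Theorem~\ref{thm6}, and the fixed-$n$ finiteness of Theorem~\ref{thm4}), and the Open Problems section explicitly records Conjecture~\ref{conj1} as unresolved. Your proposal reproduces essentially this same structure --- reduce to the no-squares statement for $\delta_n(k_0)$ via Theorem 5.3 of~\cite{JM}, then attack by congruences for bounded $k_0$ and by finiteness for fixed $n$ --- but the one genuinely new step you propose, a uniform family of witness primes $p(k_0 \bmod M)$ certifying non-residuosity along the entire eventual period of $\{\delta_n(k_0) \bmod p\}$, is only stated as a goal, not established. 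You yourself name the obstruction: there is no argument that a finite modulus $M$ and a finite list of witness primes suffice, and without that the two partial results cannot be glued (Theorem~\ref{thm4} gives no uniformity in $n$, Theorem~\ref{thm6} none in $k_0$). So the proposal has a genuine gap, and it is exactly the gap the paper leaves open.

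Two smaller corrections. First, you mischaracterize the mechanism behind Theorem~\ref{thm4}: the paper does not invoke Siegel- or Faltings-type finiteness on the curve $y^2 = \delta_n(k)$. Instead it uses the elementary Proposition~\ref{newprop} --- if $f \in \BZ[k]$ has even degree and square leading coefficient and $f(k_0)$ is a square infinitely often, then the expansion of $\sqrt{f(k)}$ at $k = \infty$ is forced to be a polynomial --- together with Lemma~\ref{newlem}, which rules out $\delta_n$ being a polynomial square for even $n$ by a parity analysis of $\low(\delta_n)$. Second, your proposed ``intermediate milestone'' (for fixed $k_0$, infinitely many $\delta_n(k_0)$ are non-squares) is already known and stronger statements are on record: Theorem~\ref{thm3} handles \emph{all} odd $n$ for every $k_0 \neq 0$, and the remark following Theorem~\ref{thm4} notes that~\cite{JM} shows $\delta_n(k_0)$ is a non-square for all but finitely many $n$ at each fixed $k_0$. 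The open content of the conjecture is precisely the upgrade from ``all but finitely many'' to ``all $n \geq 2$,'' uniformly in $k_0$, which neither your plan nor the paper achieves.
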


The question of resolving Conjecture~\ref{conj1} reduces to a seemingly unrelated question regarding squares in sequences. Let the pair of polynomials $(\delta_n, \varepsilon_n)_{n = 1, 2, \ldots}$ be given by the recursion
\begin{eqnarray*}
(\delta_1(k), \varepsilon_1(k)) & = & (2k^2,k) \text{ and }\\ (\delta_n(k), \varepsilon_n(k)) & = & \left(\delta_{n-1}(k)^2 + \varepsilon_{n-1}(k)^2, \frac{\delta_{n-1}(k)\varepsilon_{n-1}(k)}{k}\right) \text{ for } n \geq 2.
\end{eqnarray*}
(Notice that $\delta_n$ and $\varepsilon_n$ are indeed polynomials for all $n$ because of the values assigned to $\delta_1, \varepsilon_1$.) Jones and Manes then obtain the following result.

\begin{theorem}[Theorem 5.3 of~\cite{JM}]\label{thm1}
Let $\phi(x) \in \BQ(x)$ be a rational function of the form
$$\phi(x) = \frac{k_0(x^2+1)}{x},$$
where $k_0 \in \BZ \setminus\{0\}$. If $\delta_n(k_0)$ is not a square in $\BZ$ for all $n \geq 2$ , then $[C(\phi) : G(\phi)] < \infty$.
\end{theorem}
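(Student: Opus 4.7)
The plan is to analyze the tower of splitting fields $K_n = \BQ(\phi^{-n}(0))$ level by level and track how the Galois group sits inside the centralizer $C_n(\phi)$. First, I would identify the commuting M\"obius transformation: for $\phi(x) = k_0(x^2+1)/x$ one checks that $\phi(-x) = -\phi(x)$, so $\mu(x) = -x$ commutes with $\phi$ and fixes $0$; hence $A(\phi) \supseteq \langle \mu \rangle \simeq \BZ/2\BZ$, and the induced action on $T(\phi)$ pairs each vertex with its negation. Using Proposition~\ref{prop1} with $H = \langle \mu \rangle$ acting freely on each level, one gets $C_n(\phi) \simeq \Aut(T_{n-1})$ as abstract groups, so $|C_n(\phi)|$ is explicit.

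Second, I would set up the level-by-level extension $K_n/K_{n-1}$. If $\alpha \in \phi^{-(n-1)}(0)$, its two preimages under $\phi$ are the roots of $k_0 x^2 - \alpha x + k_0 = 0$, with discriminant $\Delta(\alpha) = \alpha^2 - 4k_0^2$. Consequently, $K_n$ is obtained from $K_{n-1}$ by adjoining the square roots $\sqrt{\Delta(\alpha)}$ as $\alpha$ ranges over level $n-1$. Because $\alpha$ and $-\alpha$ both sit on that level and $\Delta(\alpha) = \Delta(-\alpha)$, the discriminants already come in identified pairs, and this redundancy is exactly what distinguishes $C_n(\phi)$ from the full $\Aut(T_n(\phi))$.

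The key arithmetic input is a post-critical orbit identity. The critical points of $\phi$ are $\pm 1$ with $\phi(1) = 2k_0$, and I would prove by induction on $n$ that $\phi^{n-1}(2k_0) = \delta_n(k_0)/\varepsilon_n(k_0)$, directly from the recursion defining $(\delta_n, \varepsilon_n)$; this is a routine check after observing that $\phi$ applied to a ratio $\delta/\varepsilon$ yields $(\delta^2+\varepsilon^2)/(\delta\varepsilon/k_0)$. Next, writing $\phi^{n-1}(x) = P_{n-1}(x)/Q_{n-1}(x)$ with $P_{n-1}, Q_{n-1} \in \BZ[x]$, the product $\prod_{\alpha \in \phi^{-(n-1)}(0)}(\alpha \mp 2k_0)$ evaluates to $\pm P_{n-1}(\pm 2k_0)/(\text{leading coeff})$, and combining the two sign choices shows that $\prod_\alpha \Delta(\alpha)$ reduces modulo squares in $\BQ$ to a unit multiple of $\delta_n(k_0)$.

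Finally, the index criterion: the quotient $[C_n(\phi) : G_n(\phi)]$ is a $2$-power governed by the $\mathbb{F}_2$-rank of the Galois-span of $\{\Delta(\alpha)\}$ in $K_{n-1}^\times/(K_{n-1}^\times)^2$. When $\delta_m(k_0)$ is not a square in $\BZ$ for each $m$, a descent argument, pushing non-squareness from $\BQ$ up through the multiquadratic tower $K_1 \subset \cdots \subset K_{n-1}$, shows the rank is as large as possible at every level, so $[C_n(\phi) : G_n(\phi)]$ is bounded independently of $n$, and passing to the inverse limit gives $[C(\phi) : G(\phi)] < \infty$. The hard step is this descent: ruling out that some $\delta_n(k_0)$ could become a square inside the multiquadratic extension $K_{n-1}$ even though it is not a square in $\BZ$. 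This is where the full hypothesis is used: the non-squareness of $\delta_m(k_0)$ for every $m < n$ controls exactly which classes in $K_{n-1}^\times/(K_{n-1}^\times)^2$ are realized by the earlier discriminants, and a careful bookkeeping shows no hidden square relation can arise.
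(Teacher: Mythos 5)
First, a point of calibration: the paper you are working from does not prove Theorem~\ref{thm1} at all --- it imports the statement verbatim as Theorem 5.3 of~\cite{JM}, so the only proof to compare against is the one in Jones--Manes. Your architecture does match theirs in outline: $\mu(x) = -x$ generates $A(\phi)$, the centralizer structure comes from Proposition~\ref{prop1} (their Proposition 4.1/Corollary 4.2 --- note, though, that the centralizer is $\Aut(T_{n-1}) \times \BZ/2\BZ$, not $\Aut(T_{n-1})$; harmless for index-finiteness), level $n$ is generated over $K_{n-1}$ by square roots of the discriminants $\alpha^2 - 4k_0^2$, and the critical-orbit identity $\phi^{n-1}(2k_0) = \delta_n(k_0)/\varepsilon_n(k_0)$ is exactly the bridge to the $(\delta_n,\varepsilon_n)$ recursion. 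However, your discriminant bookkeeping contains a genuine error: the product of $\Delta(\alpha)$ over the \emph{full} level $n-1$ is automatically a perfect square and hence detects nothing. Since $\phi$ is odd, the numerator $P_{n-1}$ is an even polynomial, so the level is closed under $\alpha \mapsto -\alpha$ with $\Delta(\alpha) = \Delta(-\alpha)$, and $\prod_\alpha \Delta(\alpha) = \bigl(P_{n-1}(2k_0)/c\bigr)^2$ --- your own observation that the discriminants ``come in identified pairs'' is precisely why combining the two sign choices kills the quantity you want. The correct object is the product over one representative per $\mu$-orbit ($2^{n-2}$ pairs), which equals $\pm P_{n-1}(2k_0)/c \doteq \pm\delta_n(k_0)$ modulo squares, with the sign absorbed because $i \in K_1 \subseteq K_{n-1}$. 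One also needs the standard subset-product/Galois-orbit lemma to reduce maximality of $\Gal(K_n/K_{n-1})$ to non-squareness of this single orbit product, which your sketch does not state.

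The second and more serious gap is the step you yourself flag as ``the hard step'' and then dispatch with ``a careful bookkeeping shows no hidden square relation can arise.'' That step --- non-squareness of $\delta_n(k_0)$ in $\BZ$ implies non-squareness in the field $K_{n-1}$ --- is the actual mathematical content of Theorem 5.3 in~\cite{JM}, and it does not follow from generalities: $K_{n-1}$ is a large iterated $2$-extension, and a rational non-square can perfectly well become a square in such a tower (its square class need only coincide with that of some quadratic subfield). Ruling this out requires the specific arithmetic of the recursion --- control of the quadratic subfields of $K_{n-1}$ in terms of $-1$, $k_0$, and the earlier $\delta_m$, together with divisibility/valuation facts about $\delta_n = \delta_{n-1}^2 + \varepsilon_{n-1}^2$ --- and this is where Jones--Manes spend their effort. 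Also note that $K_{n-1}$ is not a multiquadratic extension of $\BQ$ as your phrase ``multiquadratic tower'' suggests, so even the shape of the descent you gesture at is not quite available. As it stands, your proposal is a correct reconstruction of the framework of the Jones--Manes proof with the decisive lemma left unproved and one computational step (the full-level product) that, as written, is vacuous.
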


We therefore make it our objective to prove that $\delta_n(k_0)$ is not a square for all integers $n \geq 2$ and $k_0 \neq 0$. In what follows, we discuss two methods of achieving this objective, namely a congruence method and an algebraic method.

\subsection{Congruence Method}

\noindent In~\cite{JM}, congruence methods are used to make progress toward this objective. Indeed, Jones and Manes obtain the following result by reducing $\delta_n(k_0)$ modulo the primes $2$, $3$, $5$, and $7$.

\begin{theorem}[Theorem 5.8 of~\cite{JM}]\label{thm2}
If one of the following congruences hold, then $\delta_n(k_0)$ is not a square for all $n \geq 2$:
\begin{itemize}
\item $k_0 \equiv 1 \pmod 2$,
\item $k_0 \equiv 1,2 \pmod 3$,
\item $k_0 \equiv 2, 3 \pmod 5$, or
\item $k_0 \equiv 1, 2, 5, 6 \pmod 7$.
\end{itemize}
\end{theorem}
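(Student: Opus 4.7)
The plan is to treat each of the four bullets as a separate reduction. In every case we reduce the pair recursion
\[
(\delta_n, \varepsilon_n) \;=\; \bigl(\delta_{n-1}^2 + \varepsilon_{n-1}^2,\; \tfrac{\delta_{n-1}\varepsilon_{n-1}}{k}\bigr)
\]
modulo a suitable prime power $p^a$, and then verify by an explicit finite computation along the eventually periodic tail that $\delta_n(k_0)$ lies in the non-square residue class for every $n \geq 2$. The hypothesis $k_0 \not\equiv 0 \pmod p$, which holds in each listed bullet, ensures $k_0$ is a unit modulo $p$, so the division by $k$ in the $\varepsilon$-recursion is well defined in $\BZ/p^a\BZ$. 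Since the state space $(\BZ/p^a\BZ)^2$ is finite, the sequence $\{(\delta_n,\varepsilon_n)\bmod p^a\}_{n\geq 1}$ must eventually cycle, so the desired non-squareness reduces to a finite check along the cycle.

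For the three odd-prime bullets I would work directly modulo $p$: the non-residues are $\{2\}$, $\{2,3\}$, and $\{3,5,6\}$ modulo $3$, $5$, and $7$ respectively. For $p = 3$: starting from $(\delta_1,\varepsilon_1) \equiv (2,k_0)$, iteration for each of $k_0 \equiv 1, 2 \pmod 3$ yields a cycle on which $\delta_n \equiv 2 \pmod 3$ for every $n \geq 2$. For $p = 5$: both $k_0 \equiv 2$ and $k_0 \equiv 3 \pmod 5$ stabilize with $\delta_n \equiv 3 \pmod 5$ for all $n \geq 2$. For $p = 7$: the same strategy is carried out separately for each of the four listed residues; the pair $(\delta_n,\varepsilon_n) \pmod 7$ is iterated until its full cycle is observed, and one checks that $\delta_n$ remains in the non-residue set $\{3,5,6\}$ along the entire tail.

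The $p = 2$ case is the main subtlety, since every integer is a square modulo $2$ and that modulus conveys no information. The remedy is to work modulo $8$, where the squares are precisely $\{0,1,4\}$. When $k_0$ is odd, $k_0^2 \equiv 1 \pmod 8$, so $\delta_1 \equiv 2 \pmod 8$ and $\varepsilon_1 \equiv k_0 \pmod 8$; a direct computation gives $\delta_2 \equiv 5 \pmod 8$, and a few further iterations show the pair $(\delta_n,\varepsilon_n) \bmod 8$ stabilizes with $\delta_n \equiv 5 \pmod 8$ for every $n \geq 2$, a non-square modulo $8$.

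The main obstacle is essentially organizational: each of the four bullets demands a separate cycle-finding computation, with particular care needed in two places. In the $p = 2$ case the modulus must be raised from $2$ to $8$, since modulo $2$ every element is a square; and in the $p = 7$ case the cycles are longer than for $p = 3$ or $5$, requiring one to track more iterations before periodicity can be confirmed. All state spaces involved have at most $64$ elements, so each verification is finite and mechanical, and no deeper algebraic argument is needed.
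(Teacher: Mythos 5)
Your overall strategy --- reduce the pair recursion modulo a prime (or modulo $8$ for the parity bullet), use that $k_0$ is a unit so the map is well defined on a finite state space, and verify non-residuosity along the eventual cycle --- is exactly the method used by the paper (the statement is quoted from~\cite{JM}, and the same device is spelled out in the proof of Theorem~\ref{thm6}, which extends Theorem~\ref{thm2}). Your mod $3$, mod $5$, and mod $8$ computations are correct: for odd $k_0$ one checks that $\delta_n \equiv 5 \pmod 8$ and $\varepsilon_n \equiv 2 \pmod 4$ persist for all $n \geq 2$, and the values $\delta_n \equiv 2 \pmod 3$, resp.\ $\delta_n \equiv 3 \pmod 5$, persist for the listed residues.

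However, the mod $7$ bullet as you describe it is false, and the finite check you propose would fail when actually carried out. Take $k_0 \equiv 1 \pmod 7$: the orbit is $(\delta_1,\varepsilon_1) \equiv (2,1)$, $(\delta_2,\varepsilon_2) \equiv (5,2)$, and then $\delta_3 \equiv 25 + 4 \equiv 1 \pmod 7$, a quadratic residue; the full period (of length $12$) continues to hit residues at every odd index, and the same happens for $k_0 \equiv 6 \pmod 7$ (e.g.\ $\delta_3 \equiv 1$, $\delta_5 \equiv 4 \pmod 7$). So $\delta_n$ does not ``remain in the non-residue set $\{3,5,6\}$ along the entire tail,'' and mod-$7$ information alone cannot rule out squares at odd $n$. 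The missing ingredient is the companion result recorded as Theorem~\ref{thm3} (also from Theorem 5.8 of~\cite{JM}): for all \emph{odd} $n$, $\delta_n(k_0)$ is not a square for any $k_0 \neq 0$. With that in hand one restricts to even $n$ and verifies only that $\delta_n$ is a non-residue at the even indices of the full period --- which does hold mod $7$ for all four listed residues --- and this is precisely how the paper organizes the proof of Theorem~\ref{thm6}. Note also that you cannot patch the odd-$n$ gap by adjoining your mod-$8$ argument, since the class $k_0 \equiv 1 \pmod 7$ contains even $k_0$ (e.g.\ $k_0 = 8$), for which $k_0$ is not a unit modulo $8$ and the reduced recursion is ill defined; the odd-$n$ statement genuinely requires a separate argument.
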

In~\cite{JM} it is shown using such congruence methods that if $\delta_n(k_0)$ is a square in $\BZ$ for some $n,k_0$, then $|k_0| > 10,000$. The method of reducing modulo various primes also results in the following useful theorem.

\begin{theorem}[Theorem 5.8 of~\cite{JM}]\label{thm3}
For all odd $n$ we have that $\delta_n(k_0)$ is not a square for all $k_0 \neq 0$.
\end{theorem}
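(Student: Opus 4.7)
My plan is to reduce to the ``polynomial part'' of $\delta_n$, handle $n=1,3$ directly, and attack odd $n\ge 5$ via a Pythagorean-triple analysis.

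\emph{Reduction.} I would set $a_n \defeq v_k(\delta_n(k))$ and write $\delta_n(k) = k^{a_n}\wt\delta_n(k)$ with $\wt\delta_n(0)\ne 0$, and similarly $\varepsilon_n = k^{b_n}\wt\varepsilon_n$. The recursion gives $(a_1,b_1) = (2,1)$ and, depending on whether $a_n > b_n$, $a_n < b_n$, or $a_n = b_n$, the relation $a_{n+1} = 2\min(a_n,b_n)$ or $a_{n+1} = 2a_n$, together with $b_{n+1} = a_n + b_n - 1$; the equality case preserves the value because $\wt\delta_n(0)^2 + \wt\varepsilon_n(0)^2$ is a sum of squares of nonzero integers, hence nonzero. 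A direct induction then shows $a_n$ is always even, so $k_0^{a_n}$ is a perfect square for any $k_0 \in \BZ$, and it suffices to prove that $\wt\delta_n(k_0)$ is not a square.

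\emph{Base cases.} For $n=1$ we have $\wt\delta_1 = 2$, not a square. For $n=3$, setting $B \defeq 4k_0^2 + 1 \ge 5$, a direct computation yields $\wt\delta_3(k_0) = B^2 + 4$; since $2B + 1 \ge 11 > 4$, the inequalities $B^2 < B^2 + 4 < (B+1)^2$ show $\wt\delta_3(k_0)$ lies strictly between consecutive integer squares.

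\emph{Odd $n \ge 5$.} For $n-1$ even one has $a_{n-1} = b_{n-1}$, so the recursion yields $\wt\delta_n = \wt\delta_{n-1}^2 + \wt\varepsilon_{n-1}^2$, which is a square only if $(\wt\delta_{n-1}(k_0), \wt\varepsilon_{n-1}(k_0))$ extends to a Pythagorean triple. For $n = 5$ one checks $\wt\delta_4 = k^2 A^2 + 4B^2$ and $\wt\varepsilon_4 = 2AB$ (where $A \defeq B^2+4$), and by expansion derives the key identity $\wt\delta_5 = (k^2 A^2 - 4B^2)^2 + 4A^2 B^3$. Assuming $\wt\delta_5(k_0) = m^2$ and setting $W \defeq k^2 A^2 - 4B^2$, one gets $(m - W)(m + W) = 4 A^2 B^3$; using $\gcd(W, AB) = 1$ (which follows from $A, B$ being odd, $\gcd(A,B) = 1$, and $\gcd(k_0, B) = 1$), one extracts a coprime factorization $c_1 c_2 = A^2 B^3$ with $c_2 - c_1 = W$. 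The point is that $B$ is never a perfect square (it lies strictly between $(2k_0)^2$ and $(2k_0+1)^2$) and appears with odd multiplicity $3$; combining this with the size estimates $W \sim k_0^{10}$ and $\sqrt{A^2 B^3} \sim k_0^7$, which force $c_1 \lesssim k_0^4$, a finite case analysis on the coprime splittings of $A^2 B^3$ rules out every possibility.

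\emph{Main obstacle.} The hard part will be extending the structural identity to arbitrary odd $n$: I expect an identity of the form $\wt\delta_n = W_n^2 + 4 P_n^2 B^{2\ell_n+1}$ with $\gcd(W_n, P_n B) = 1$, provable by a careful induction that tracks these quantities alongside the recursion, using crucially that every prime dividing $B = 4k_0^2 + 1$ is $\equiv 1 \pmod 4$. An alternative I would pursue is to reinterpret the identity as a norm equation in the imaginary quadratic ring $\BZ[\sqrt{-B}]$, controlling factorizations via the class number of $\BQ(\sqrt{-B})$; this would reflect the Latt\`es nature of $\phi(x) = k(x^2+1)/x$ and its associated CM elliptic curve, and may well give a uniform proof for all odd $n$.
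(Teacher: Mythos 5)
Your proposal should first be measured against what the paper actually does here: nothing. Theorem~\ref{thm3} is imported verbatim from Theorem 5.8 of~\cite{JM}, where it is established by congruence arguments (the paper's own framing: ``the method of reducing modulo various primes also results in the following useful theorem''). So your attempt must stand on its own, and much of it does check out. The reduction to $\wt\delta_n$ is sound, since $\low(\delta_n)$ is even (consistent with Lemma~\ref{lem1}, which gives $\low(\delta_n)=\tfrac{2^n}{3}-\tfrac{(-1)^n}{3}+1$), and your parity induction on $(a_n,b_n)$ matches Claims (2) and (3) in the proof of that lemma. The cases $n=1,3$ are complete and correct: with $B=4k_0^2+1$ one has $\wt\delta_3=B^2+4$, strictly between $B^2$ and $(B+1)^2$. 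Your $n=5$ identity is also true: $(k^2A^2+4B^2)^2+4A^2B^2=(k^2A^2-4B^2)^2+4A^2B^3$, since the difference of the two squares is $16k^2A^2B^2=4A^2B^2(B-1)$, and the coprimality claims $\gcd(W,AB)=1$ all verify.

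The gaps are two, and the second is fatal to the claim as stated. (i) Even for $n=5$, the ``finite case analysis on the coprime splittings of $A^2B^3$'' is not finite uniformly in $k_0$: coprimality of $c_1,c_2$ forces $c_1=A_1^2B_1^3$ for unitary divisors $A_1$ of $A$ and $B_1$ of $B$, and the number of such splittings grows with the number of prime factors of $AB$, hence with $k_0$. Your size bound $c_1\lesssim k_0^4$ prunes but leaves infinite families ($A_1$ up to roughly $k_0^2$, $B_1$ up to roughly $k_0^{4/3}$), each of which needs its own Diophantine argument; for instance the family $c_1=B^3$ dies because $c_1(c_1+W)=A^2B^3$ forces $A^2(k^2-1)=B^2(3-4k^2)$, impossible by sign, but nothing in your sketch handles the remaining families, and for $k_0$ even $W\equiv 0\pmod 4$, so the $2$-adic bookkeeping in $(m-W)(m+W)=4A^2B^3$ must be redone. (ii) Decisively, for odd $n\geq 7$ you offer only a conjectured identity (``I expect an identity of the form\ldots'') and a speculative alternative in $\BZ[\sqrt{-B}]$; since the theorem quantifies over \emph{all} odd $n$, your proposal as written proves it only for $n\in\{1,3\}$. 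It is also worth noting that the congruence route of~\cite{JM} is both shorter and uniform in $n$ --- already $\wt\delta_3=B^2+4\equiv 5\pmod 8$ settles $n=3$ with no sandwiching --- which suggests that a $2$-adic/congruence analysis of $\wt\delta_n$, rather than factorization of $m^2-W^2$, is the efficient path to the full statement.
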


It is possible to obtain a result analogous to Theorem~\ref{thm2} for every prime, and using an exhaustive search, we can find the congruence classes of $k_0$ modulo a given prime for which $\delta_n(k_0)$ is not a square for all $n \geq 2$. We extend Theorem~\ref{thm2} as follows.
\begin{theorem}\label{thm6}
If one of the following congruences hold, then $\delta_n(k_0)$ is not a square for all $n$. It follows that $\delta_n(k_0)$ is not a square for all $1 \leq |k_0| \leq 10^6$.
\begin{itemize}
\item $k_0 \equiv 1 \pmod 2$,
\item $k_0 \equiv 1,2 \pmod 3$,
\item $k_0 \equiv 2, 3 \pmod 5$,
\item $k_0 \equiv 1, 2, 5, 6 \pmod 7$,
\item $k_0 \equiv 1,2,5,6,9,10 \pmod{11}$,
\item $k_0 \equiv 3,6,7,10 \pmod{13}$,
\item $k_0 \equiv 1,3,14,16 \pmod{17}$,
\item $k_0 \equiv 6,7,9,10,12,13 \pmod{19}$,
\item $k_0 \equiv 1,6,8,9,14,15,17,22\pmod{23}$,
\item $k_0 \equiv 2,11,12,14,15, 17,18,27\pmod{29}$,
\item $k_0 \equiv 6,10,11,14,17,20,21,25\pmod{31}$,
\item $k_0 \equiv 6,8,10,11,14,17,18,19,20,23,26,27,29,31\pmod{37}$,
\item $k_0 \equiv 2,11,13,14,15,26,27,28,30,39\pmod{41}$,
\item $k_0 \equiv 4,5,12,14,17,21,22,26,29,31,38,39\pmod{43}$,
\item $k_0 \equiv 4,7,11,12,15,19,21,26,28,32,35,36,40,43\pmod{47}$,
\item $k_0 \equiv 2,19,22,25,26,27,28,31,34,51\pmod{53}$,
\item $k_0 \equiv 1,3,7,8,51,25,29,30,34,52,56,58\pmod{59}$,
\item $k_0 \equiv 1,2,3,12,17,24,29,30,31,32,37,44,49,58,59,60\pmod{61}$,
\item $k_0 \equiv 5,6,9,15,18,22,27,30,32,33,34,35,37,40,45,49,$\\$52,58,61,62\pmod{67}$,
\item $k_0 \equiv 6,7,11,12,16,20,27,28,30,33,38,41,43,44,51,55,$\\$59,60,64,65\pmod{71}$,
\item $k_0 \equiv 6,7,12,13,14,20,24,29,32,33,40,41,44,49,53,59,$\\$60,61,66,67\pmod{73}$,
\item $k_0 \equiv 20,68\pmod{79}$,
\item $k_0 \equiv 12,16,21,62,71\pmod{83}$,
\item $k_0 \equiv 9,50\pmod{103}$,
\item $k_0 \equiv 106\pmod{107}$,
\item $k_0 \equiv 92 \pmod{109}$,
\item $k_0 \equiv 26,105\pmod{131}$,
\item $k_0 \equiv 89\pmod{149}$,
\item $k_0 \equiv 24\pmod{157}$,
\item $k_0 \equiv 19\pmod{173}$, or
\item $k_0 \equiv 52,145 \pmod{197}$.
\end{itemize}
\end{theorem}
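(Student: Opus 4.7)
The plan is to continue the congruence-based strategy of Theorem~\ref{thm2} to additional primes and then combine the resulting congruences into a sieve covering the range $1 \leq |k_0| \leq 10^6$. The key observation is that for any prime $p$ and residue $a \in \BZ/p\BZ$, the sequence of pairs $(\delta_n(a), \varepsilon_n(a)) \bmod p$ is determined entirely by the recursion and takes values in the finite set $(\BZ/p\BZ)^2$, hence is eventually periodic with period at most $p^2$. If every residue $\delta_n(a) \bmod p$ for $n \geq 2$ (equivalently, every value appearing in the initial segment and the eventual cycle) is a quadratic non-residue modulo $p$, then $\delta_n(k_0)$ cannot be a square in $\BZ$ for any $n \geq 2$ whenever $k_0 \equiv a \pmod p$.

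The first step is, for each prime $p$ in the list, to iterate the recursion $(\delta_n, \varepsilon_n) \bmod p$ for every residue class $a \bmod p$ until the pair repeats, and then record those $a$ for which all appearing $\delta$-values are non-residues. The proof for each listed class is then a finite verification modeled on the argument in the commented-out proof of Theorem~\ref{thm2} for $p = 3$ (where $\delta_n(k_0) \equiv 2 \pmod 3$ for all $n \geq 1$ whenever $k_0 \not\equiv 0 \pmod 3$, and $2$ is a non-residue mod $3$). One small subtlety is that the defining recursion contains a formal division by $k$; this is legitimate at the level of polynomials $\varepsilon_n \in \BZ[k]$, but when $a \equiv 0 \pmod p$ it requires direct polynomial reduction rather than iteration of the recursion mod $p$.

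The second step is to verify that every integer $k_0$ with $1 \leq |k_0| \leq 10^6$ lies in at least one of the listed congruence classes. This is a direct sieve: loop over $k_0$ in the range, test membership against the finite list of congruences, and confirm coverage; equivalently, compute the set of uncovered integers and confirm it is empty. Should some $k_0$ in the range fail every listed congruence, one would either adjoin further primes (continuing the pattern) or else check $\delta_n(k_0)$ directly for the first several $n$ and test squareness via $\lfloor \sqrt{\delta_n(k_0)} \rfloor$.

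The main obstacle is computational rather than conceptual. Identifying all bad classes for a given prime requires certifying that the orbit of $(\delta_n, \varepsilon_n) \bmod p$ has entered its eventual cycle, which for the larger primes (e.g.\ $p = 197$) can involve tracking an orbit whose length approaches $p^2$. More delicately, the coverage claim for $|k_0| \leq 10^6$ depends on the listed primes and classes jointly forming a covering system in this range; because the density of bad classes is fairly small for a typical prime, establishing coverage up to $10^6$ is not automatic and must be confirmed by direct search, and one has essentially no a priori bound on how many primes will be needed if the process is extended further. Both tasks are well within the scope of a short computer search but lie outside anything checkable by hand.
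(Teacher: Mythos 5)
Your overall strategy --- iterate $(\delta_n,\varepsilon_n)$ modulo each listed prime, certify the eventual cycle (plus any preperiod), and then sieve the range $1 \leq |k_0| \leq 10^6$ --- is exactly the paper's strategy. But your certification criterion diverges from the paper's at one concrete point, and it matters: you demand that \emph{every} value $\delta_n(a) \bmod p$ for $n \geq 2$ be a quadratic non-residue, whereas the paper only requires this at \emph{even} $n$, disposing of odd $n$ by Theorem~\ref{thm3}, which says that $\delta_n(k_0)$ is never a square for odd $n$ and any $k_0 \neq 0$. This is not a cosmetic weakening: the paper's explicit parenthetical (``by Theorem~\ref{thm3}, we may assume that $n$ is even'') signals that for some of the listed classes the orbit does contain quadratic residues at odd indices, so your stronger all-$n$ test would reject classes that the theorem asserts, and your finite verification would then fail to establish the statement as given. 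The fix is simply to import Theorem~\ref{thm3} and restrict the non-residue check to even $n$ --- taking care, when the cycle length is odd, to examine a doubled period so that the parity of the index is tracked correctly. Relatedly, the paper handles preperiods more permissively than you do: it does not insist that preperiod $\delta$-values be non-residues modulo $p$, only that the finitely many corresponding terms at even $n$ be disposed of directly; your blanket non-residue requirement on the initial segment is again stricter than what the listed classes are claimed to satisfy.

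Two smaller remarks. Your fallback for a hypothetical uncovered $k_0$ --- checking $\delta_n(k_0)$ for ``the first several $n$'' --- cannot prove the theorem's for-all-$n$ claim, since only finitely many $n$ would be tested; the paper instead relies on the congruence list genuinely covering the range (indeed up to $1{,}056{,}575$), with coverage confirmed by exactly the sieve you describe, following the methods of~\cite{JM}. Also, your caution about the formal division by $k$ in the recursion when $a \equiv 0 \pmod p$ is moot here: no listed class contains $0$ modulo its prime, and $k_0 = 0$ is excluded throughout, so the recursion can be iterated numerically with division by $a$ in $(\BZ/p\BZ)^*$ in every case that arises.
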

\begin{proof}
For each one of the above pairs of congruence class $k_0$ and prime $p$, one can check that a full period of the (periodic) sequence $(\delta_n(k_0), \varepsilon_n(k_0))$ modulo $p$ is such that $\delta_n(k_0)$ is never a quadratic residue when $n$ is even (by Theorem~\ref{thm3}, we may assume that $n$ is even), implying that $\delta_n(k_0)$ is never a square. In the case that the sequence $(\delta_n(k_0), \varepsilon_n(k_0))$ modulo $p$ is preperiodic (i.e., the sequence behaves nonperiodically before it becomes periodic), one can check that each term of the ``preperiod'' is such that $\delta_n(k_0)$ (when not reduced modulo $p$) is never a square when $n$ is even. It follows from the methods described in~\cite{JM} that $\delta_n(k_0)$ is not a square for all $|k_0| \leq 10^6$ (in fact, these congruences work up to $1,056,575$).
\end{proof}
Theorem~\ref{thm6} provides good evidence that Conjecture~\ref{conj1} is true. Indeed, we would expect that pairs $(n,k_0)$ for which $\delta_n(k_0)$ is a square are such that $n$ and $k_0$ are both small (e.g. recall that the largest square in the Fibonacci sequence is $144$), and Theorem~\ref{thm6} indicates that $\delta_n(k_0)$ is not a square even for small $n$ and $k_0$. By repeating the method described in the proof of Theorem~\ref{thm6}, one can presumably raise the lower bound on $k_0$-values even higher than $10^6$.

\subsection{Algebraic Method}

\noindent We now present a more algebraic study of the polynomials $\delta_n, \varepsilon_n$ (i.e., one that does not involve reducing modulo various primes). To begin with, we state and prove some algebraic properties of the polynomials $\delta_n, \varepsilon_n$ as follows.

\begin{lemma}\label{lem1}
Let $\deg(f)$ denote the degree of a polynomial $f$, and let $\low(f)$ denote the degree of the lowest-degree (nonzero) term of $f$. Then we have the following results:
$$\deg(\delta_n) = 2^n,\quad \low(\delta_n) = \frac{2^n}{3} - \frac{(-1)^n}{3} + 1,$$ $$\deg(\varepsilon_n) = 2^n - n, \quad \text{ and } \quad \low(\varepsilon_n) = \frac{2^n}{3} + \frac{(-1)^n}{6} + \frac{1}{2}.$$
\end{lemma}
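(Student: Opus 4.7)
The plan is to prove all four formulas simultaneously by induction on $n$. The base case $n = 1$ follows from direct substitution: $\delta_1 = 2k^2$ gives $\deg(\delta_1) = 2 = 2^1$ and $\low(\delta_1) = 2 = 2/3 + 1/3 + 1$, while $\varepsilon_1 = k$ gives $\deg(\varepsilon_1) = 1 = 2 - 1$ and $\low(\varepsilon_1) = 1 = 2/3 - 1/6 + 1/2$.

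For the inductive step, I would handle the degrees first. Because $2\deg(\delta_{n-1}) = 2^n > 2^n - 2(n-1) = 2\deg(\varepsilon_{n-1})$ for $n \geq 2$, the leading term of $\delta_n = \delta_{n-1}^2 + \varepsilon_{n-1}^2$ comes entirely from $\delta_{n-1}^2$, giving $\deg(\delta_n) = 2^n$. Similarly, $\deg(\varepsilon_n) = \deg(\delta_{n-1}\varepsilon_{n-1}) - 1 = 2^{n-1} + (2^{n-1} - (n-1)) - 1 = 2^n - n$; one must also verify that $\delta_{n-1}\varepsilon_{n-1}$ is genuinely divisible by $k$, which follows from $\low(\delta_{n-1}) + \low(\varepsilon_{n-1}) \geq 2$ under the induction hypothesis. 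The formula for $\low(\varepsilon_n)$ then drops out from $\low(\varepsilon_n) = \low(\delta_{n-1}) + \low(\varepsilon_{n-1}) - 1$ and routine arithmetic with the induction hypothesis.

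The delicate step is $\low(\delta_n)$. Using the inductive formulas one computes
\[
2\low(\delta_{n-1}) - 2\low(\varepsilon_{n-1}) = 1 + (-1)^n,
\]
so when $n$ is even the two quantities differ by $2$ and $\low(\delta_{n-1}^2 + \varepsilon_{n-1}^2) = 2\low(\varepsilon_{n-1})$ with no possible interference. When $n$ is odd, however, the lowest-degree exponents of $\delta_{n-1}^2$ and $\varepsilon_{n-1}^2$ coincide, and a priori their coefficients could cancel; this is the main obstacle. It is resolved by the observation that both $\delta_{n-1}^2$ and $\varepsilon_{n-1}^2$ are squares of polynomials with nonzero lowest-degree coefficients, so the lowest-degree coefficient of each square is strictly positive. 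Their sum is therefore nonzero, and $\low(\delta_n) = 2\low(\varepsilon_{n-1})$ in both parities. A final arithmetic simplification shows $2\low(\varepsilon_{n-1}) = \tfrac{2^n}{3} - \tfrac{(-1)^n}{3} + 1$, completing the induction.
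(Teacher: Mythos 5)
Your proof is correct, and it follows the same basic strategy as the paper's --- induction driven by the recursions $\deg(\delta_n)=2\deg(\delta_{n-1})$, $\deg(\varepsilon_n)=\deg(\delta_{n-1})+\deg(\varepsilon_{n-1})-1$, $\low(\varepsilon_n)=\low(\delta_{n-1})+\low(\varepsilon_{n-1})-1$, and $\low(\delta_n)=2\min\bigl(\low(\delta_{n-1}),\low(\varepsilon_{n-1})\bigr)$ --- but it is organized differently and is more careful at one point. The paper first proves, by a separate induction, the comparison claims $\deg(\delta_n)>\deg(\varepsilon_n)$, $\low(\delta_n)=\low(\varepsilon_n)$ for even $n$, and $\low(\delta_n)=\low(\varepsilon_n)+1$ for odd $n$; it then writes down the four linear recursions and solves them to get the closed forms. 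You instead induct directly on the closed-form expressions, extracting the needed degree/valuation comparisons from the inductive hypothesis (your identity $2\low(\delta_{n-1})-2\low(\varepsilon_{n-1})=1+(-1)^n$ is exactly the paper's parity dichotomy in disguise), which trades solving a recursion for verifying a guessed formula. The substantive improvement in your write-up is the treatment of the step $\low(\delta_n)=\low\bigl(\delta_{n-1}^2+\varepsilon_{n-1}^2\bigr)$: when the two lowest exponents coincide (odd $n$), cancellation must be ruled out, and you do so by noting that the lowest coefficients of the two squares are squares of nonzero integers, hence positive, so their sum cannot vanish. The paper asserts $\low(\delta_{n+1})=2\min\bigl(\low(\delta_n),\low(\varepsilon_n)\bigr)$ without comment, silently relying on this positivity; your proof makes the needed argument explicit.
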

\begin{proof}
We first make the following three claims: (1) $\deg(\delta_n) > \deg(\varepsilon_n)$ for all $n$; (2) $\low(\delta_n) = \low(\varepsilon_n)$ for all even $n$; and (3) $\low(\delta_n)  = \low(\varepsilon_n) + 1$ for all odd $n$. Claim (1) clearly holds for $n = 1$. Now suppose Claim (1) holds for some $n \geq 1$. From the equation $\delta_{n+1}(k) = \delta_n(k)^2 + \varepsilon_n(k)^2$, we find that $\deg(\delta_{n+1}) = 2\max(\deg(\delta_n), \deg(\varepsilon_n)) = 2\deg(\delta_n)$ . Also, from the equation $\varepsilon_{n+1}(k) = \frac{\delta_n(k)\varepsilon_n(k)}{k}$, we find that $\deg(\varepsilon_{n+1}) = \deg(\delta_n) + \deg(\varepsilon_n) - 1 < 2\deg(\delta_n) = \deg(\delta_{n+1})$, from which Claim (1) follows by induction.

We prove Claims (2) and (3) simultaneously. These two claims clearly hold for $n = 1$ and $n = 2$. Then suppose the claim holds for $n$ where $n$ is even, and let $a = \low(\delta_n) = \low(\varepsilon_n)$. Notice that $\low(\delta_{n+1}) = 2 \min(\low(\delta_{n}), \low(\varepsilon_{n})) = 2a$
and that $\low(\varepsilon_{n+1}) = \low(\delta_n) + \low(\varepsilon_n) -1 = 2a-1$. Also notice that $\low(\delta_{n+2}) = 2 \min(\low(\delta_{n+1}), \low(\varepsilon_{n+1})) = 4a-2$
and that $\low(\varepsilon_{n+2}) = \low(\delta_{n+1}) + \low(\varepsilon_{n+1}) -1 = 4a - 2$. Claims (2) and (3) then follow by induction.

It follows from Claims (1), (2), and (3) that we have these recursions:
\begin{align*}
\deg(\delta_n) & = 2\deg(\delta_{n-1}),  \deg(\varepsilon_n)  = \deg(\delta_{n-1}) + \deg(\varepsilon_{n-1}) - 1,\\
 \low(\delta_n) & =  2\low(\varepsilon_{n-1}), \low(\varepsilon_n)  = \low(\delta_{n-1}) + \low(\varepsilon_{n-1}) - 1.
\end{align*}
Solving these recursions then yields the lemma.
\end{proof}

\begin{lemma}\label{lem2}
The leading coefficient of $\delta_n$ is a square for all $n \geq 2$. Moreover, the degree of each term in $\delta_n$ is even for all $n$.
\end{lemma}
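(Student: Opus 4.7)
The plan is to handle the two claims essentially independently, both by induction, using the degree information already established in Lemma~\ref{lem1}.

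For the leading-coefficient statement, I would observe that Lemma~\ref{lem1} gives $\deg(\delta_{n-1}) = 2^{n-1}$ and $\deg(\varepsilon_{n-1}) = 2^{n-1} - (n-1)$, so $\deg(\delta_{n-1}) > \deg(\varepsilon_{n-1})$ for all $n \geq 2$. Therefore in the identity $\delta_n(k) = \delta_{n-1}(k)^2 + \varepsilon_{n-1}(k)^2$, the top-degree term is contributed solely by $\delta_{n-1}(k)^2$. Writing $\ell_n$ for the leading coefficient of $\delta_n$, this yields the clean recursion $\ell_n = \ell_{n-1}^2$ for all $n \geq 2$. An immediate induction then shows $\ell_n$ is a perfect square for every $n \geq 2$ (indeed $\ell_n = \ell_1^{2^{n-1}} = 2^{2^{n-1}}$, starting from $\ell_1 = 2$).

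For the parity-of-degrees statement, the key idea is that the $\delta_n$ claim alone is not strong enough to sustain an induction through the recursion, because $\delta_n$'s defining formula involves $\varepsilon_{n-1}^2$. I would therefore strengthen the statement to a joint claim on both sequences: \emph{every term of $\delta_n(k)$ has even degree, and every term of $\varepsilon_n(k)$ has degree congruent to $n \pmod 2$}. The base case $n=1$ is immediate since $\delta_1 = 2k^2$ and $\varepsilon_1 = k$. For the inductive step, I would check the two parity cases for $n$ and show that the recursion propagates the statement to $n+1$: in $\delta_{n+1} = \delta_n^2 + \varepsilon_n^2$, squaring a polynomial whose terms all have the same parity produces a polynomial all of whose terms have even degree, so $\delta_{n+1}$ has all even-degree terms regardless of the parity of $n$; in $\varepsilon_{n+1} = \delta_n \varepsilon_n / k$, the product $\delta_n \varepsilon_n$ has all terms of degree $\equiv n \pmod 2$ (even $+$ $n$), and dividing by $k$ shifts the parity, giving all terms of degree $\equiv n+1 \pmod 2$, as required.

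There is no serious obstacle here; the only subtlety is the need to strengthen the second claim so that the induction closes, which is why one has to track $\varepsilon_n$ alongside $\delta_n$. The division by $k$ in the recursion for $\varepsilon_n$ is what makes the parity shift by one at each step, and this is the feature that forces the stronger hypothesis. Once the joint statement is in place, the verification is a short case-by-case check on parities, and the lemma follows.
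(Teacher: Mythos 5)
Your proof is correct. The first statement is handled exactly as in the paper: both arguments note $\deg(\delta_{n-1}) > \deg(\varepsilon_{n-1})$ (from Lemma~\ref{lem1}) so that the leading term of $\delta_n$ is the square of the leading term of $\delta_{n-1}$. For the second statement, however, you take a genuinely different route. The paper eliminates $\varepsilon$ altogether: from $\varepsilon_{n-1}^2 = \delta_n - \delta_{n-1}^2$ it derives the pure-$\delta$ recursion $\delta_n(k) = \delta_{n-1}(k)^2 + \bigl(\delta_{n-1}(k)\delta_{n-2}(k)^2 - \delta_{n-2}(k)^4\bigr)/k^2$ and then runs a strong induction on $\delta$ alone, observing that squares, products, and division by $k^2$ all preserve the all-even-degrees property. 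You instead keep $\varepsilon_n$ in play and strengthen the inductive hypothesis to the joint claim that every term of $\varepsilon_n$ has degree $\equiv n \pmod 2$, letting the division by $k$ shift the parity by one at each step. Your version buys a cleaner single-step induction (the paper's $\delta$-only recursion is second order, so strictly it needs both $n=1$ and $n=2$ as base cases, a point the paper glosses over) and as a byproduct establishes the parity of the term degrees of $\varepsilon_n$, which is extra structural information consistent with the formulas $\deg(\varepsilon_n) = 2^n - n$ and $\low(\varepsilon_n)$ of Lemma~\ref{lem1}; the paper's version buys a closed recursion in the $\delta$'s alone, which is of independent interest (it is the identity the paper actually displays). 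Both arguments are complete and correct.
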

\begin{proof}
The first statement clearly holds for $n = 2$. From the proof of Lemma~\ref{lem1}, we have that $\deg(\delta_n) > \deg(\varepsilon_n)$ for each $n$, so the leading term of $\delta_{n+1}$ is the square of the leading term of $\delta_n$.

The second statement clearly holds for $n = 1$. From the recursion
\begin{eqnarray*}
(\delta_1(k), \varepsilon_1(k)) & = & (2k^2,k) \text{ and }\\ (\delta_n(k), \varepsilon_n(k)) & = & \left(\delta_{n-1}(k)^2 + \varepsilon_{n-1}(k)^2, \frac{\delta_{n-1}(k)\varepsilon_{n-1}(k)}{k}\right) \text{ for } n \geq 2,
\end{eqnarray*}
we find that $\varepsilon_{n-1}(k)^2 = \delta_{n-1}(k) - \delta_{n-2}(k)^2$,
and solving the recursion in terms of $\delta$'s alone yields that
$$\delta_n(k) = \delta_{n-1}(k)^2 + \frac{\delta_{n-1}(k)\delta_{n-2}(k)^2 - \delta_{n-2}(k)^4}{k^2}.$$
Suppose the second statement holds for all $n \leq m$ where $m \geq 1$, and set $n = m+1$ in the above equation. Then, the right-hand-side of the above equation is easily seen to be such that each term has even degree, so each term of $\delta_{m+1}$ has even degree. Thus, the second statement holds by induction.
\end{proof}

\begin{remark}
It follows from the second statement in Lemma~\ref{lem2} that we need only consider positive $k$ in our search for squares in the sequence $\{\delta_n(k) : n = 1, 2, \ldots\}$.
\end{remark}

\begin{lemma}\label{newlem}
For all even $n \geq 2$, the polynomial $\delta_n$ is not the square of any polynomial (in $\BC[k]$).
\end{lemma}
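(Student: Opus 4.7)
The plan is to argue by contradiction. Suppose $\delta_n = Q(k)^2$ for some $Q \in \BC[k]$; necessarily $\deg Q = 2^{n-1}$. By Lemma~\ref{lem2}, every term of $\delta_n$ has even degree, so $\delta_n(-k) = \delta_n(k)$ and hence $Q(-k)^2 = Q(k)^2$ in the integral domain $\BC[k]$. This forces $Q(-k) = \pm Q(k)$, so $Q$ is either purely even or purely odd in $k$. The rest of the argument is a short parity count handling each case.

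If $Q$ is odd, I would write $Q(k) = k \cdot R(k^2)$, which makes $\deg Q$ odd; but $\deg Q = 2^{n-1}$ is even for all $n \geq 2$, a contradiction. If instead $Q$ is even, I would write $Q(k) = R(k^2)$, so that $\low(Q) = 2\low(R)$ and hence $\low(\delta_n) = 2\low(Q)$ is divisible by $4$. On the other hand, Lemma~\ref{lem1} gives $\low(\delta_n) = (2^n+2)/3$ for even $n$, and a short congruence check shows this is $\equiv 2 \pmod 4$ whenever $n \geq 2$: writing $n = 2s$ with $s \geq 1$, one has $4^s \equiv 4 \pmod{12}$, so $4^s + 2 \equiv 6 \pmod{12}$, giving $(4^s + 2)/3 \equiv 2 \pmod 4$. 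This contradicts divisibility by $4$, finishing the proof.

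The only real calculation is the elementary congruence check in the even case; everything else is forced directly by the structural information already packaged into Lemmas~\ref{lem1} and~\ref{lem2}, so I do not anticipate any serious obstacle in filling in the details.
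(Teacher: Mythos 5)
Your proof is correct, and it reaches the contradiction by a genuinely different mechanism than the paper, though both rest on the same numerical input. The shared core is the pair of facts you cite: by Lemma~\ref{lem2} every exponent in $\delta_n$ is even, and by Lemma~\ref{lem1} one has $\low(\delta_n) = (2^n+2)/3 \equiv 2 \pmod 4$ for even $n$ (the paper phrases this as $\low(\delta_n)/2$ being odd, which is the same thing; your check $4^s \equiv 4 \pmod{12}$, hence $(4^s+2)/3 \equiv 2 \pmod 4$, is correct). Where you diverge is in how the contradiction is extracted. The paper normalizes away the trailing power of $k$, setting $g(k) = \delta_n(k)\,k^{-\low(\delta_n)}$, observes that a square root $f$ of $g$ would have odd degree ($2^{n-1} - \low(\delta_n)/2$ is even minus odd), and then runs a coefficient-extraction argument: taking the largest even $m < \deg f$ with nonzero coefficient $b$ in $f$ (which exists since $g$, hence $f$, has nonzero constant term), the coefficient of the odd-degree monomial $k^{\deg f + m}$ in $f^2 = g$ reduces to $2\,b \cdot (\text{leading coefficient of } f) \neq 0$, contradicting the evenness of $g$'s exponents. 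You replace all of that bookkeeping with the involution $k \mapsto -k$: from $\delta_n(-k) = \delta_n(k)$ and $Q(-k)^2 = Q(k)^2$ in the integral domain $\BC[k]$ you get $Q(-k) = \pm Q(k)$, and the odd case dies on the parity of $\deg Q = 2^{n-1}$ while the even case forces $4 \mid \low(\delta_n)$, contradicting $\low(\delta_n) \equiv 2 \pmod 4$ (note your contradiction survives even the degenerate possibility $\low(R)=0$, since $0 \not\equiv 2 \pmod 4$). Your route is cleaner: the even/odd decomposition is immediate and structural, whereas the paper's maximal-even-index argument requires care to see why the cross terms vanish (maximality of $m$ kills all other contributions $a_i a_j$ with $i+j = \deg f + m$). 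What the paper's version buys is that it works directly with coefficients and so transfers verbatim to settings where one wants explicit control of a particular coefficient of $g$; but as a proof of this lemma, your argument is the more economical of the two.
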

\begin{proof}
Fix even $n \geq 2$. We claim that $\frac{\low(\delta_n)}{2}$ is odd. Indeed, since we took $n$ to be even, we can write $n = 2m$ for some $m \geq 1$, and so by Lemma~\ref{lem1}, we have that
\begin{eqnarray*}
\frac{\low(\delta_n)}{2} & = & \frac{1}{2}\left(\frac{2^n}{3} - \frac{(-1)^n}{3} + 1\right) \\
& = & \frac{2^{2m-1}}{3} - \frac{1}{6} + \frac{1}{2} \\
& = & \frac{2^{2m-1} +  1}{3} \\
& \equiv & 1 \pmod 2.
\end{eqnarray*}
Now, let $g$ be defined by
$$g(k) = \delta_n(k) \cdot k^{-\low(\delta_n)}.$$
Observe that $g \in \BZ[k]$, and since $\low(\delta_n)$ is even, we have that $\delta_n$ is the square of a polynomial if and only if $g$ is the square of a polynomial. For the sake of contradiction, suppose $g$ is the square of a polynomial $f$. Notice that $\deg f = \frac{\deg g}{2}$ is odd because $\frac{\low(\delta_n)}{2}$ is odd and $\deg \delta_n \equiv 0 \pmod 4$. Let $m$ be the largest nonnegative even integer smaller than $\deg f$ such that the coefficient of $k^m$ in the polynomial $f$ is nonzero (such an $m$ exists because the constant term of $g$ is nonzero, so the constant term of $f$ must also be nonzero), and take this coefficient to be $b$. If $b \neq 0$, then the polynomial $g$ has a nonzero term of odd degree. Since $\low(\delta_n)$ is even, it follows that $\delta_n$ has a nonzero term of odd degree, which contradicts the result of Lemma~\ref{lem2}. It follows that $\delta_n$ is not the square of any polynomial.
\end{proof}

We have now provided a partial characterization of the polynomials $\delta_n$ and $\varepsilon_n$. The following result is a more general statement about polynomials and is not specific to the case of $\delta_n$.

\begin{proposition}\label{newprop}
Let $f \in \BZ[k]$ be a polynomial of positive even degree whose leading coefficient is a square. If $f(k_0)$ is a square for infinitely many integers $k_0$, then $f$ is the square of a polynomial.\footnote{The proof of this proposition was obtained by generalizing an argument presented in~\cite{noam}.}
\end{proposition}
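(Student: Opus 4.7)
The plan is to build a polynomial $P \in \BQ[k]$ that acts as a square root of $f$ up to a low-degree error, and then argue that the combined constraints of integrality and growth force the error to vanish for infinitely many integer inputs, hence identically. Write $\deg f = 2d$ and choose $a \in \BZ$ with $a^2$ equal to the leading coefficient of $f$. I would first construct $P$ of degree $d$ with leading coefficient $a$ so that $\deg(f - P^2) \leq d - 1$. This is a standard iterative procedure: once the top coefficients $a, b_{d-1}, \ldots, b_{d-j+1}$ of $P$ are pinned down, the next coefficient $b_{d-j}$ is uniquely determined by the requirement that the coefficient of $k^{2d-j}$ in $f - P^2$ vanish; this is a linear equation in $b_{d-j}$ with leading coefficient $2a \neq 0$, hence always solvable over $\BQ$. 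After $d$ steps, the coefficients of $k^d, k^{d+1}, \ldots, k^{2d}$ in $f - P^2$ all cancel. Clearing denominators, pick $N \in \BZ_{>0}$ with $NP \in \BZ[k]$.

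Next, for any integer $k_0$ with $f(k_0) = m^2$, $m \in \BZ$, set
\[
A \defeq Nm - NP(k_0), \qquad B \defeq Nm + NP(k_0),
\]
both of which lie in $\BZ$. Their product is
\[
AB = N^2 m^2 - N^2 P(k_0)^2 = N^2 \bigl( f(k_0) - P(k_0)^2 \bigr),
\]
a polynomial expression in $k_0$ of degree at most $d - 1$, and hence $|AB| \leq C|k_0|^{d-1}$ for a constant $C$ and all $|k_0|$ sufficiently large. On the other hand, $B - A = 2NP(k_0)$ has order $|k_0|^d$, so $\max(|A|, |B|) \geq |NP(k_0)|$ also has order $|k_0|^d$. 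If $A$ and $B$ were both nonzero integers, then $|A|, |B| \geq 1$ would give $|AB| \geq \max(|A|,|B|)$, and this lower bound grows strictly faster in $|k_0|$ than the upper bound $C|k_0|^{d-1}$, yielding a contradiction for $|k_0|$ large.

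Consequently, for all but finitely many integer $k_0$ with $f(k_0)$ a square, either $A = 0$ or $B = 0$; in either case $f(k_0) = P(k_0)^2$. The hypothesis that infinitely many such $k_0$ exist then supplies infinitely many roots of the polynomial $f - P^2$, forcing $f = P^2$ identically. I expect the subtlest point to be the integrality step: the argument relies crucially on the leading coefficient of $f$ being a square in $\BZ$, since this is what forces $a \in \BZ$, and hence $NP(k_0), Nm \in \BZ$. Without this integrality the key inequality $|AB| \geq \max(|A|,|B|)$ whenever $AB \neq 0$ would fail, and there would be no way to pit the degree-$d$ lower bound against the degree-$(d-1)$ upper bound to extract the contradiction that drives the whole proof.
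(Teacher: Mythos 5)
Your proof is correct, and it takes a genuinely different route from the paper's. Both arguments begin from the same idea---a degree-$d$ polynomial approximation to $\sqrt{f}$---but the paper executes it analytically: it sets $g(k) = \sqrt{f(k)}$, expands at $k = \infty$ as $g(k) = ak^d + \sum_{i=0}^{d-1} b_i k^i + h(k)$ with $h(k) = O(1/k)$, and derives a contradiction between the integrality of $g(k_0)$ and the fact that $b_0 + h(k_0)$ avoids $\BZ$ once $0 < |h(k_0)| < \varepsilon$. You, by contrast, never leave polynomial arithmetic: you construct $P \in \BQ[k]$ with $\deg(f - P^2) \leq d-1$ by coefficient matching, clear denominators with $N$, and pit the factorization $N^2\bigl(f(k_0) - P(k_0)^2\bigr) = \bigl(Nm - NP(k_0)\bigr)\bigl(Nm + NP(k_0)\bigr)$ of an integer of size $O(|k_0|^{d-1})$ against the observation that one factor must have size on the order of $|k_0|^d$, forcing a factor to vanish. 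Each approach buys something. The paper's is quicker to state; yours is more elementary (no limits or Taylor expansions, only discreteness of $\BZ$) and effectively stronger, since it shows that outside an explicit finite set every integer $k_0$ with $f(k_0)$ a square already satisfies $f(k_0) = P(k_0)^2$. Your denominator-clearing step also handles cleanly a point the paper glosses over: its condition (1), that $g(k_0) - b_0 - h(k_0) = ak_0^d + \sum_{i=1}^{d-1} b_i k_0^i$ lies in $\BZ$, is not immediate because the $b_i$ are only rational, and making it rigorous requires multiplying through by a common denominator---precisely the role your $N$ plays. Your closing remark about where the square leading coefficient enters is also on target, with one slight refinement: what the hypothesis really buys is $a \in \BQ$ (hence $P \in \BQ[k]$, so a clearing constant $N$ exists at all); that $a \in \BZ$ then follows since $a^2 \in \BZ$. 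Finally, your conclusion gives $f = P^2$ with $P \in \BQ[k]$, which matches the statement as written; if one wants $P \in \BZ[k]$, a one-line appeal to Gauss's lemma upgrades it.
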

\begin{proof}
Write $f(k) = a^2k^{2n} + \sum_{i = 0}^{2n-1} a_ik^i$, where $a \neq 0$. If we take $g(k) = \sqrt{f(k)}$, then the Taylor expansion of $g(k)$ about the point $k = \infty$ can be expressed as $g(k) = ak^n + \sum_{i = 0}^{n-1} b_ix^i + h(k)$, where the coefficients $b_0, \dots, b_{n-1}$ are rational numbers and where $h(k) = O(1/k)$. Notice that $\lim_{k \to \pm\infty} h(k) = 0$, and suppose for the sake of contradiction that $h$ is not identically $0$. Then for every $\varepsilon > 0$, there exists $M$ such that $0 < |h(k)| < \varepsilon$ whenever $|k| > M$. By taking $\varepsilon$ sufficiently small, we can ensure that $[(b_0 - \varepsilon, b_0) \cup (b_0,b_0 + \varepsilon)] \cap \BZ = \varnothing$, which implies that $b_0 + h(k) \not\in \BZ$ whenever $|k| > M$. Thus, by assumption, there exists an integer $k_0$ so large in absolute value that the following conditions hold: (1) $g(k_0) - b_0 - h(k_0) \in \BZ$, (2) $b_0 + h(k_0) \not\in \BZ$, and (3) $g(k_0) \in \BZ$. These three conditions are clearly contradictory, so we must have that $h$ is identically $0$. It follows that $g$ is a polynomial and that $f(k) = g(k)^2$, so $f$ is indeed the square of a polynomial.
\end{proof}

Combining Proposition~\ref{newprop} with the description of the polynomials $\delta_n$ provided in Lemmas~\ref{lem1},~\ref{lem2}, and~\ref{newlem} yields the following result, which provides evidence toward the conjecture that $\delta_n(k_0)$ is not a square for all integers $n \geq 2$ and $k_0 \neq 0$.

\begin{theorem}\label{thm4}
Fix $n \geq 2$. Then $\delta_n(k_0)$ is a square for at most finitely many nonzero integers $k_0$.
\end{theorem}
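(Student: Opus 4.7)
The plan is to split into the parity of $n$ and then simply assemble the tools already built in the excerpt; there is no substantial new obstacle left once Lemmas~\ref{lem1},~\ref{lem2},~\ref{newlem} and Proposition~\ref{newprop} are in hand. First, if $n \geq 3$ is odd, Theorem~\ref{thm3} already tells us that $\delta_n(k_0)$ is never a square for any nonzero integer $k_0$, so the conclusion holds vacuously (zero is finite). So only the even case requires any argument.

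For even $n \geq 2$, I would argue by contradiction: suppose $\delta_n(k_0)$ is a square for infinitely many nonzero integers $k_0$. I want to feed $\delta_n$ into Proposition~\ref{newprop}, which requires three hypotheses: $\delta_n \in \BZ[k]$, $\delta_n$ has positive even degree, and the leading coefficient of $\delta_n$ is a square. The first is immediate from the recursion defining $\delta_n$; the second follows from Lemma~\ref{lem1}, which gives $\deg(\delta_n) = 2^n \geq 4$; and the third is the content of Lemma~\ref{lem2}. With all three hypotheses verified, Proposition~\ref{newprop} would force $\delta_n$ to be the square of a polynomial in $\BC[k]$ (or equivalently in $\BZ[k]$, but we only need it in $\BC[k]$).

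However, Lemma~\ref{newlem} says precisely that for even $n \geq 2$, $\delta_n$ is not the square of any polynomial in $\BC[k]$. This contradicts what Proposition~\ref{newprop} just gave us, so the hypothesis that $\delta_n(k_0)$ is a square for infinitely many nonzero integers $k_0$ must fail. Hence $\delta_n(k_0)$ is a square for at most finitely many such $k_0$, completing the even case and therefore the theorem.

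The only subtlety is to be careful that the hypothesis on infinitely many integer $k_0$ in Proposition~\ref{newprop} matches what I am assuming in the contradiction; since "nonzero integer" values form a subset of "integer" values, infinitely many nonzero integers certainly constitute infinitely many integers, so the application is clean. There is no genuine obstacle: the work was done upstream in establishing the structural lemmas about $\delta_n$ (the leading coefficient being a square, every term having even degree, and $\frac{\low(\delta_n)}{2}$ being odd for even $n$) together with the general diophantine fact in Proposition~\ref{newprop}. The theorem is then a two-line assembly.
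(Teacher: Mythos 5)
Your proposal is correct and follows essentially the same route as the paper's own proof: restrict to even $n$ via Theorem~\ref{thm3}, verify the hypotheses of Proposition~\ref{newprop} using Lemmas~\ref{lem1} and~\ref{lem2}, and derive a contradiction with Lemma~\ref{newlem}. The only cosmetic difference is that you phrase the final step as an explicit proof by contradiction while the paper applies Proposition~\ref{newprop} in contrapositive form; these are the same argument.
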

\begin{proof}
By Theorem~\ref{thm3}, we may restrict to the case of $n$ even. In this case, Lemma~\ref{newlem} tells us that $\delta_n$ is not the square of a polynomial. By Lemma~\ref{lem1}, $\deg \delta_n$ is positive and even, and by Lemma~\ref{lem2}, the leading coefficient of $\delta_n$ is a square. The theorem now follows immediately from Proposition~\ref{newprop}.
\end{proof}

\begin{remark}
The result of Theorem~\ref{thm4} forms a companion to a result presented on p.~25 of~\cite{JM}, where it is shown that for fixed $k_0$, $\delta_n(k_0)$ is not a square for all but finitely many $n$.
\end{remark}

\section*{Open Problems}

\noindent In the case where the sequence $\{a_n\}$ defined by $a_0 = 0$ and $a_n = \phi(a_{n-1})$ is periodic and $d > 2$, the Galois theoretic properties of iterates of $\phi$ have not yet been studied. Moreover, it remains to be studied for a wider selection of rational functions $\phi$ whether the index $[S(\phi) : G(\phi)]$ is finite. In the case where $\phi$ commutes with a nontrivial M\"{o}bius transformation that fixes $0$, Conjecture~\ref{conj1} remains open; i.e. it is yet to be determined whether $[C(\phi) : G(\phi)]$ is finite for all rational functions $\phi$ of the form $\phi(x) = \frac{k_0(x^2 + 1)}{x}$, and the situation where $d > 2$ has not yet been studied. It would also be interesting to investigate whether one can use standard conjectures, such as the $abc$ conjecture or conjectures on bounds for heights of rational points on curves, to prove that $\delta_n(k_0)$ is not a square. Finally, a number of additional questions and conjectures relating to arboreal Galois representations are posed in~\cite{jone}.

\section*{Acknowledgments}

\noindent This research was supervised by Joe Gallian at the University of Minnesota Duluth REU and is
supported by the National Science Foundation (grant number DMS-1062709) and the
National Security Agency (grant number H98230-11-1-0224). I would like to thank Joe
Gallian for his advising and support. I would also like to thank Rafe Jones for introducing me to the field of arithmetic dynamics, explaining his work, and providing numerous helpful comments on my research. I would like to thank Noam Elkies for many fruitful discussions on polynomials and squarefree sequences, and for directing me to the source~\cite{noam}. I would finally like to thank Adam Hesterberg, Noah Arbesfeld, Simon Rubinstein-Salzedo, and Daniel Kane for helpful discussions and
valuable suggestions on the paper.


\begin{thebibliography}{9}
\bibitem{hd} Mikl\'{o}s Ab\'{e}rt and B\'{a}lint Vir\'{a}g. Dimension and randomness in groups acting on rooted trees. \emph{J. Amer. Math. Soc.}, 18(1):157-192, 2004.
\bibitem{noam} Noam D. Elkies (http://mathoverflow.net/users/14830/noam-d-elkies), How to decide this function takes integer values?, URL (version: 2011-06-08): \url{http://mathoverflow.net/q/67202}.
\bibitem{reu} Richard Gottesman and Kwokfung Tang. Quadratic Recurrences with a positive density of prime divisors. \emph{Int. J. Number Theory}, 6(5):1027-1045, 2010.
\bibitem{jone2} Rafe Jones. The density of prime divisors in the arithmetic dynamics of quadratic polynomials. \emph{J. London Math. Soc. (2)}, 78(2):523-544, 2008.
\bibitem{jone} Rafe Jones. Galois representations from pre-image trees: an arboreal survey. \emph{Pub. Math. Besan\c{c}on}, 107-136, 2013.
\bibitem{JM} Rafe Jones and Michelle Manes. Galois theory of quadratic rational functions. \emph{Comment. Math. Helv.}, 89(1):173-213, 2014.
\bibitem{odo1} Robert Odoni. The Galois theory of iterates and composites of polynomials. \emph{Proc. London Math. Soc. (3)}, 51(3):385-414, 1985.
\bibitem{odo2} Robert Odoni. On the prime divisors of the sequence $w_{n+1} = 1 + w_1 \cdots w_n$. \emph{J. London Math. Soc. (2)}, 32(1):1-11, 1985.
\end{thebibliography}

\end{document}